\pdfoutput=1
\documentclass[a4paper,11pt]{article}
\usepackage[margin=2.5cm]{geometry}

\usepackage[english]{babel}
\usepackage{amsfonts, amsthm, amsmath, amssymb, mathrsfs}
\usepackage{xcolor}

\usepackage{enumitem}
\newlist{steps}{enumerate}{1}
\setlist[steps, 1]{label = \textbf{Step \arabic*}:}
\usepackage{mathtools, tikz, tikz-cd, float}
\usepackage{subfig} 
\usepackage{ytableau} 
\usetikzlibrary { decorations.pathmorphing, decorations.pathreplacing}
\usepackage{hyperref}
\hypersetup{
colorlinks=true,
linkcolor=blue,
filecolor=magenta,      
urlcolor=cyan,
citecolor=[RGB]{0,204,0},
}
\usepackage[capitalise, nameinlink]{cleveref}
\usepackage{resizegather}
\usepackage[activate={true, nocompatibility}, final, tracking=true, kerning=true, spacing=true, factor=1100, stretch=10, shrink=10]{microtype}
\usepackage{authblk}
\usepackage{tikz}
\usepackage{dsfont}
\usetikzlibrary{matrix}

\usepackage{enumitem}
\setlist[enumerate,1]{label={(\roman{enumi})}, leftmargin=*}

\theoremstyle{definition}
\newtheorem{theorem}{Theorem}[section]
\newtheorem{definition}[theorem]{Definition}
\newtheorem{prin}[theorem]{Principle}

\newtheorem{example}[theorem]{Example}
\newtheorem{remark}[theorem]{Remark}
\newtheorem{prop}[theorem]{Proposition}
\newtheorem{corollary}[theorem]{Corollary}
\newtheorem{lemma}[theorem]{Lemma}

\usepackage[T1]{fontenc}
\usepackage[utf8]{inputenc}
\usepackage{babel}

\usepackage{subfiles}
\usepackage{authblk}

\tikzset{
  symbol/.style={
    draw=none,
    every to/.append style={
      edge node={node [sloped, allow upside down, auto=false]{$#1$}}}
  }
}

\microtypecontext{spacing=nonfrench}

\begin{document}
\title{\textbf{Dimension bounds for relative character varieties on the projective line with three punctures $G=GL(r), O(r), Sp(r)$}}
\author{Emmett Lennen}
\affil{Department of Mathematics, University of Pennsylvania\\ \url{elennen@sas.upenn.edu}}
\date{}
\maketitle
\abstract{We consider relative character varieties on $\mathbb{P}^1\backslash\{0,1,\infty\}$ with $G=GL(r),O(r),$ or $Sp(r)$. Using a diagrammatic method of Simpson's \cite{simp_quadpaper}, we give an explicit linear upper bound $R(d)$ on the rank r of an MC-minimal character variety of dimension $d>2$. An arbitrary character variety is isomorphic, via Katz's middle convolution, to one satisfying the bound. For the general linear and non-overlapping quadratic cases, the bounds we give are the sharpest possible using this method.}
\tableofcontents

\section{Introduction}

Let $O(r)$ (resp. $Sp(r)$) denote the group of linear transformations preserving a symmetric (resp. antisymmetric) nondegenerate bilinear form on a complex $r$-dimensional vector space. Consider an irreducible $G$-local system on $\mathbb{P}^1\backslash \{t_1,...,t_k\}$ with $G=GL(r)=GL(r,\mathbb{C})$, $O(r)$, or $Sp(r)$. The monodromy representation of the local system is an irreducible representation of the fundamental group
$$\rho:\pi_1(\mathbb{P}^1\backslash\{t_1,...,t_k\},t_0)\rightarrow G$$
where $t_0$ is a basepoint. Let $C_i\subset G$ denote the conjugacy class of the local monodromy transformation around $t_i$. Fix conjugacy classes $C_1,...,C_k$. The relative character variety is the variety of equivalence classes of such representations, i.e., the quotient of the variety of such representations by $G$, acting by simultaneous conjugation on the monodromies. When $G=O(r)$ or $Sp(r)$, we refer to these as quadratic relative character varieties.

In the general linear case, Katz \cite{katz_rigid} defined an operation called middle convolution with respect to a convolutor which gives isomorphisms between relative character varieties on $\mathbb{P}^1\backslash\{t_1,...,t_k\}$ with different monodromies over $\mathbb{Q}_l$. This was given an algebraic description over $\mathbb{C}$ by Dettweiler-Reiter in \cite{dett_reiter_alg_Katz_inverGalois}. Katz used middle convolution to give an algorithm to check the existence of rigid local systems. This is recovered by Dettweiler-Reiter's construction. A rigid local system can be described as one where the relative character variety containing it has dimension zero. The key part of the algorithm is that any dimension zero character variety of any rank is isomorphic to a character variety of rank $1$ via middle convolution. We refer to a character variety such that middle convolution cannot lower its rank as an MC-minimal character variety (see Definition \ref{def_mcminimalgl} for the exact definition when $G=GL(r)$). It is then an interesting question to ask how the rank of the MC-minimal character variety compares to its dimension. 
 
This paper focuses on the case of having three punctures, $k=3$. Let $\lambda_i^0\geq \lambda_i^1\geq...\geq \lambda_i^{s_i}$ be the partition of $r$ given by the multiplicities of the eigenvalues of the monodromy at $t_i$. In \cite{simpson_katzmiddleconv} Section 2.7, Simpson shows that for the general linear case, the dimension of an MC-minimal character variety can be read diagrammatically as the area leftover after removing squares of size $\lambda_i^j$ for $i=1,2,3$ and $j=1,...,s_i$ from an $r\times r$ square (see Section \ref{subsec_GLsetup}). The $r\times r$ square is split into three distinct columns where the $i$th column contains all the squares of size $\lambda_i^j$ for $j=1,...,s_i$. 

\begin{figure}[h]
\begin{center}
\begin{tikzpicture}[scale=0.7]
\filldraw[color=white, thin] (2.5,0) rectangle (6,6);
\draw[very thick] (0,0) rectangle (6,6);
\draw[dashed] (2.5,0) -- (2.5,6);
\filldraw[color=black, fill=gray!15, thick] (0,6) rectangle (2.5,3.5) node[pos=.5] {$\lambda^0_1$}; 
\filldraw[color=black, fill=gray!15, thick] (0,3.5) rectangle (1.5,2) node[pos=.5] {$\lambda^1_1$};
\filldraw[color=black, fill=gray!15, thick] (0,1) rectangle (1,2) node[pos=.5] {$\lambda^1_2$};
\filldraw[color=black, fill=gray!15, thick] (0,1) rectangle (1,0) node[pos=0.5] {$\lambda^{3}_1$};
\filldraw[color=black, fill=gray!15, thick] (4.5,4) rectangle (2.5,6) node[pos=.5] {$\lambda^0_2$}; 
\filldraw[color=black, fill=gray!15, thick] (2.5,4) rectangle (4,2.5) node[pos=.5] {$\lambda^1_2$}; 
\filldraw[color=black, fill=gray!15, thick] (2.5,2.5) rectangle (4,1) node[pos=.5] {$\lambda^2_2$}; 
\filldraw[color=black, fill=gray!15, thick] (2.5,0) rectangle (3.5,1) node[pos=.5] {$\lambda^{3}_2$}; 
\filldraw[color=black, fill=gray!15, thick] (4.5,6) rectangle (6,4.5) node[pos=.5] {$\lambda^0_3$}; 
\filldraw[color=black, fill=gray!15, thick] (4.5,3) rectangle (6,4.5) node[pos=.5] {$\lambda^1_3$}; 
\filldraw[color=black, fill=gray!15, thick] (4.5,3) rectangle (5.5,2) node[pos=.5] {$\lambda^2_3$}; 
\filldraw[color=black, fill=gray!15, thick] (4.5,2) rectangle (5.5,1) node[pos=.5] {$\lambda^3_3$}; 
\filldraw[color=black, fill=gray!15, thick] (4.5,0) rectangle (5.5,1) node[pos=.5] {$\lambda^{4}_3$}; 
\end{tikzpicture}
\end{center}
\caption{Example of a diagram associated to a $GL(r)$ relative character variety.}
\end{figure}
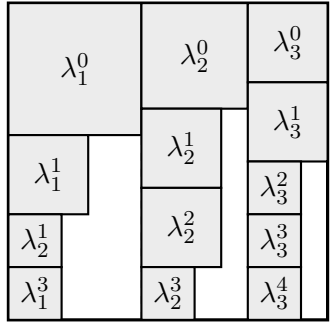

Using these configurations, we prove in Theorem \ref{thm_GLrbound_charvarversion} that any MC-minimal character variety with $G=GL(r)$ of dimension $d$ has rank $r\leq 3d$. 

In \cite{simp_quadpaper}, Simpson proved the existence of such a bound for the quadratic case with $k=3$ by giving a similar combinatorial description. In order to preserve quadratic local systems, one restricts to middle convolution with quadratic convolutors. However, it is no longer sufficient to consider middle convolution with a rank $1$ convolutor. Instead, Simpson showed that certain compositions of two rank $1$ (not necessarily quadratic) middle convolutions can be interpreted as a quadratic rank $2$ middle convolution. Using these higher rank middle convolutions, any MC-minimal quadratic character variety satisfies certain inequalities, which allows one to again read off the dimension diagrammatically as the leftover area after removing squares of size the multiplicity of the eigenvalues, but with extra linear terms (see Section \ref{subsec_quadsetup}). 

By tracing through Simpson's proof, the bound he finds is $\frac{3}{2}(21252d+11173142784)$ or, if one more carefully applies Prop 6.2 of \cite{simp_quadpaper}, the bound can be reduced to $\frac{3}{2}(1260d+135266208)$. We improve this bound by proving that any MC-minimal quadratic character variety of dimension $d$ has rank $r\leq 9d+54$ (Theorem \ref{thm_quadthm_charvar}). We accomplish this by proving a weaker version of Simpson's reductions and checking more cases to compensate. Just as Simpson does, we prove this by first finding a bound for the so-called non-overlapping configurations, then passing to the overlapping ones as well by multiplying the bound by $\frac{3}{2}$ (see Section \ref{subsec_quadsetup}). Let $q_i$ be the width of the column associated to the point $t_i$, which is often the biggest multiplicity of an eigenvalue of $C_i$. The key computational proposition in \cite{simp_quadpaper} can be stated as follows. Write $r=B_iq_i+\alpha_i$ where $B_i$ is the number of big parts (parts such that $\lambda_i^j\geq q_i/2$). For a column partition with dimension sufficiently small and $I\subset \{1,2,3\}$ with $q_i$ sufficiently big then $\sum_{i\in I}\frac{|\alpha_i|}{B_i}\leq 1$ which restricts the choices of $B_i,\alpha_i$. The remaining cases are then dealt with separately. This heavily reduces the possible partitions since the inequality restricts how far off parts can be from being size $0$ or $q$.

In this paper, we replace the notion of $B_i,\alpha_i$ by writing $r=n_iq_i+c_i$ where $c_i\in (-\frac{q_i}{2},\frac{q_i}{2}]$. One should think of $c_i$ as a measure of how close $q_i$ is to dividing $r$. We prove a version of Simpson's key proposition in Prop \ref{prop_nnpair}, restricting the possible column widths to ones with $(n_1,n_2)\in \{(2,2),(3,3),(2,4),(2,3)\}$ and $|c_i|\leq n_i$ with $i=1,2$ without reference to a particular partition. To reduce the possible partitions, we prove useful computational results such as proving that a partition higher in the dominance ordering has smaller dimension (Prop \ref{prop_quadDelta_domorder}) and finding all possible dimension contributions of a given column partition (Lemma \ref{lem_colqq_qqa}). This allows us to check the remaining cases in Section \ref{sec_5}. With this careful approach, we find the strictest bound for non-overlapping configurations (Theorem \ref{thm_quadthm_configsec2}).

\subsubsection*{Issue of non-emptiness:} To any character variety, we can associate to it a configuration. In the proofs of the theorems, we only work with configurations. However, in order for a character variety to achieve a certain dimension suggested by a configuration, one must show that it is nonempty. For $G=GL(r)$, the issue of showing non-emptiness of such character varieties is known as the Deligne-Simpson problem in the following form. Given $C_1,...,C_k$ conjugacy classes in $GL(r)$, find the necessary and sufficient conditions for there to exist $A_i\in C_i$ with no common proper invariant subspace such that
$$\prod_{i=1}^kA_i=\mathrm{Id}.$$
Simpson presented this problem in \cite{simp_product_of_matrices} and solved it when one of the conjugacy classes $C_k$ is semisimple with generic distinct eigenvalues. A sufficient and conjecturally necessary criterion was given by relating solutions of the Deligne-Simpson problem to representations of deformed multiplicative preprojective algebras in \cite{cb_shaw_multpreproj_middle_conv_DS}. This conjecture was recently proved in \cite{shu_tame_DS_problem} and \cite{cb_hubery_ds_prob}. See \cite{kostov_delignesimpson_survey} for a survey. One should be able to use the criteria to check non-emptiness in the general linear case, although the author was unable to do so. To the author's knowledge, little is known about the analogous problem for conjugacy classes of $G_{\varepsilon}=O(r),Sp(r)$.

\subsubsection*{Description of Sections:}
In Section \ref{sec_2}, background on MC-minimal character varieties and the combinatorial setup is given. The general linear case is described in Subsection \ref{subsec_GLsetup}, the quadratic case is described in Subsection \ref{subsec_quadsetup}, and a description of middle convolution is given in Subsection \ref{subsec_middleconv}. Section \ref{sec_3} proves the general linear bound. Section \ref{sec_4} sets up the needed computational and reduction results for the quadratic case. Section \ref{sec_5} runs through the leftover quadratic cases of $(n_1,n_2)\in \{(2,2),(3,3)(2,4),(2,3)\}$ with $|c_i|\leq n_i$ for $i=1,2$ and is where we find our minimal non-overlapping cases.

\subsection*{Acknowledgements} I would like to thank my advisor Ron Donagi for helpful discussions and Xinxuan Wang for suggesting to look at the dominance ordering and for giving the proof for Prop \ref{prop_mincoldim}. I would like to especially thank Carlos Simpson for patiently answering questions regarding the proofs in his paper and for providing helpful comments. 

\section{Minimal Character Varieties}\label{sec_2}

In Subsection \ref{subsec_GLsetup} and Subsection \ref{subsec_quadsetup}, we describe the combinatorial setup for our MC-minimal character varieties in the general linear and quadratic settings without explicitly describing middle convolution. We describe middle convolution in Subsection \ref{subsec_middleconv}.

\subsection{General Linear MC-Minimal Character Varieties:}\label{subsec_GLsetup}

We explain how to get configurations in the general linear setting. We follow Section 2 of \cite{simpson_katzmiddleconv}. The relative character variety on $\mathbb{P}^1\backslash \{t_1,...,t_k\}$ with monodromy $C_1,...,C_k$ of rank $r$ is given by
$$\mathscr{X}^{\mathrm{irred}}(r,C_1,...,C_k)=\{\rho:\pi_1(\mathbb{P}^1\backslash \{t_1,...,t_k\})\rightarrow GL(r) \text{ such that }\rho\text{ is irred, } \rho(\gamma_i)\in C_i\}/GL(r).$$
Here $C_i$ is a conjugacy class in $GL(r)$ and $\gamma_i$ is the monodromy transformation around the point $t_i$. Equivalently, one can think of this as the moduli of irreducible local systems on $\mathbb{P}^1\backslash\{t_1,...,t_k\}$ with monodromy around $t_i$ in $C_i$. 

\begin{prop}\label{prop_dimX}(Proposition 2.11 in \cite{simpson_katzmiddleconv})
    Assuming the character variety is non-empty. Then 
    $$\mathrm{dim}\mathscr{X}^{\mathrm{irred}}(r,C_1,...,C_k)=\sum_{i=1}^k\mathrm{dim}C_i-2r^2+2.$$
    \qed
\end{prop}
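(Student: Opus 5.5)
We compute the dimension as a quotient of the representation variety of irreducible representations by the free action of $PGL(r)=GL(r)/\mathbb{C}^*$.

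\textbf{Step 1: the representation variety.} Choosing standard generators $\gamma_1,\dots,\gamma_k$ of $\pi_1(\mathbb{P}^1\backslash\{t_1,\dots,t_k\},t_0)$ with the single relation $\gamma_1\cdots\gamma_k=1$ identifies representations with tuples
$$(A_1,\dots,A_k)\in C_1\times\cdots\times C_k,\qquad A_1\cdots A_k=\mathrm{Id}.$$
Let $\mu:C_1\times\cdots\times C_k\to SL(r)$ be the product map; its image lies in $SL(r)$ because $\prod_i\det C_i=1$ is forced by non-emptiness. The representation variety is then $\mu^{-1}(\mathrm{Id})$ intersected with the open irreducible locus.

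\textbf{Step 2: smoothness via the derivative of $\mu$.} We show that $\mu$ is a submersion onto $SL(r)$ at every irreducible point. Translate the derivative of $\mu$ to $\mathfrak{sl}(r)$. The tangent space to $C_i$ at $A_i$ is $\{[X,A_i]\}$, so the image of $d\mu$ is spanned by the conjugates $\mathrm{Ad}(A_1\cdots A_{i-1})([X_i,A_i]A_i^{-1})$. Compute its annihilator under the trace pairing. An element $Y\in\mathfrak{gl}(r)$ pairs to zero with this image exactly when $Y$ commutes with each $A_i$, after successive conjugation. Concretely, setting $Y_i=(A_1\cdots A_i)^{-1}Y(A_1\cdots A_i)$, the condition becomes $Y_{i-1}=Y_i$ commuting with $A_i$, so $Y$ commutes with every $A_i$. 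By irreducibility and Schur's lemma, $Y$ is then a scalar. Hence the image of $d\mu$ is the full trace-zero part $\mathfrak{sl}(r)$. It follows that $\mu^{-1}(\mathrm{Id})^{\mathrm{irred}}$ is smooth of dimension
$$\sum_i\dim C_i-(r^2-1).$$

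\textbf{Step 3: the quotient.} $PGL(r)$ acts on irreducible tuples freely, again by Schur's lemma, and properly, since irreducible points are stable in the GIT sense. The quotient therefore has dimension
$$\sum_i\dim C_i-(r^2-1)-(r^2-1)=\sum_i\dim C_i-2r^2+2.$$

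\textbf{Main obstacle.} The substantive step is Step 2. It requires verifying carefully that the annihilator of $\mathrm{im}(d\mu)$ consists exactly of the matrices $Y$ commuting with all $A_i$, keeping track of the conjugations by partial products. One must also account for the determinant constraint that cuts the target down from $GL(r)$ to $SL(r)$.
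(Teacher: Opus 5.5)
The paper gives no proof of this statement. It quotes Simpson's Proposition~2.11 and closes it immediately, so there is no internal argument to match. Your proposal is correct and is a complete, self-contained proof.

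In Step~2, set $P_i=A_1\cdots A_i$ and $Y_i=P_i^{-1}YP_i$. Then
\[
\mathrm{tr}\bigl(Y\,\mathrm{Ad}(P_{i-1})(X-A_iXA_i^{-1})\bigr)=\mathrm{tr}\bigl((Y_{i-1}-Y_i)X\bigr),
\]
so the annihilator is exactly the centralizer of the tuple. This never uses $P_k=\mathrm{Id}$. Hence $\mu$ is submersive onto $SL(r)$ at every irreducible tuple in $C_1\times\cdots\times C_k$, which is what you need.

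In Step~3, properness is cleanest if you do GIT on the affine variety $\{A_1\cdots A_k=\mathrm{Id}\}\subset GL(r)^k$. There irreducible equals stable, so the quotient map on that locus is a principal $PGL(r)$-bundle. Then restrict to the invariant, locally closed subvariety $\{A_i\in C_i\}$; this step matters because the $C_i$ need not be closed when they are not semisimple.

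The other standard route is deformation-theoretic. The tangent space at $\rho$ is the parabolic cohomology $H^1(\mathbb{P}^1,j_*\mathrm{End}(V))$. Schur's lemma and duality give $h^0=h^2=1$, and $\chi=(2-k)r^2+\sum_i\dim Z(C_i)$. Therefore $h^1=2+(k-2)r^2-\sum_i\dim Z(C_i)=\sum_i\dim C_i-2r^2+2$.

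That version connects directly to Katz's index of rigidity and to the sheaf-theoretic middle convolution formalism used later. On its own, however, it only computes Zariski tangent spaces and needs a separate unobstructedness argument to conclude smoothness. Your submersion argument gives smoothness, pure dimension and the quotient structure at once.

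The two approaches rest on the same fact. The standard complex computing parabolic cohomology from the presentation $\gamma_1\cdots\gamma_k=1$ has your translated $d\mu$ as its last map. So $\mathrm{coker}(d\mu)$ is $H^2$, which is dual to the centralizer $H^0$.
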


What is the dimension of a conjugacy class? We can write the dimension in terms of $Z(C_i)$, the centralizer of a matrix in $C_i$, as 
$$\mathrm{dim}C_i=r^2-\mathrm{dim}Z(C_i).$$
Assume all of the $C_i$ are semisimple. Let $\lambda_i^0\geq ...\geq \lambda_i^{s_i}$ be the multiplicities of the eigenvalues of matrices in $C_i$. Then the centralizer can be described as block diagonal matrices of the size given by $\lambda_i^j$. 

\begin{center}
\begin{tikzpicture}[scale=0.6]
\draw[very thick] (0,0) rectangle (6,6);
\filldraw[color=black, fill=gray!15, thick] (0,6) rectangle (2.5,3.5) node[pos=.5] {$\lambda^0$};
\filldraw[color=black, fill=gray!15, thick] (2.5,3.5) rectangle (4,2) node[pos=.5] {$\lambda^1$};
\filldraw[black] (4.5,1.5) circle (1pt);
\filldraw[black] (4.25,1.75) circle (1pt);
\filldraw[black] (4.75,1.25) circle (1pt);
\filldraw[color=black, fill=gray!15, thick] (5,1) rectangle (6,0) node[pos=0.5] {$\lambda^s$};
\end{tikzpicture}
\end{center}

Hence, the dimension of $C_i$ is the number of empty boxes in the above picture. 

\begin{remark}
    When the conjugacy classes are not semisimple, the centralizer is smaller, and hence the dimension of such character varieties is bigger. The proof of our bound only relies on the sizes of the boxes, and so the bound will also work for the non-semisimple case.
\end{remark}

Move all the boxes to the left.

\begin{center}
\begin{tikzpicture}[scale=0.6]
\draw[decorate,decoration={brace,amplitude=5pt,mirror,raise=1ex}] (6,6) -- (2.5,6) node[above=6pt, midway]{$r-\lambda^0$};
\filldraw[color=white, thin] (2.5,0) rectangle (6,6);
\draw[very thick] (0,0) rectangle (6,6);
\draw[dashed] (2.5,0) -- (2.5,6);
\filldraw[color=black, fill=gray!15, thick] (0,6) rectangle (2.5,3.5) node[pos=.5] {$\lambda^0$}; 
\filldraw[color=black, fill=gray!15, thick] (0,3.5) rectangle (1.5,2) node[pos=.5] {$\lambda^1$};
\filldraw[black] (0.5,1.5) circle (1pt);
\filldraw[black] (0.5,1.75) circle (1pt);
\filldraw[black] (0.5,1.25) circle (1pt);
\filldraw[color=black, fill=gray!15, thick] (0,1) rectangle (1,0) node[pos=0.5] {$\lambda^s$};
\end{tikzpicture}
\end{center}

Then $\mathrm{dim}C_i\geq r(r-\lambda_i^0)$ by considering the empty right rectangle as in the picture and by comparing this to Prop \ref{prop_dimX},
$$\mathrm{dim}\mathscr{X}^{\mathrm{irred}}(r,C_1,...,C_k)\geq \sum_{i=1}^k r(r-\lambda_i^0)-2r^2+2=2+r\left((k-2)r-\sum_{i=1}^k\lambda_i^0\right).$$
Then the defect is defined as
\begin{equation}\label{eq_delta}
\delta(C_1,...,C_k)\coloneqq (k-2)r-\sum_{i=1}^k\lambda_i^0.
\end{equation}
Katz's middle convolution operations, defined in Katz's book \cite{katz_rigid}, give isomorphisms between different relative character varieties on $\mathbb{P}^1\backslash\{t_1,...,t_k\}$. In Subsection \ref{subsec_middleconv}, we will give the explicit construction of middle convolution. Under convolution associated to the highest multiplicity eigenvalue, the rank changes as
$$r\mapsto r+\delta(C_1,...,C_k).$$
Hence, we keep applying until $\delta\geq 0$. Note that this process will terminate since the rank will decrease. We refer to a character variety as an MC-minimal character variety when applying middle convolution will no longer decrease the rank, i.e., the defect is non-negative.

\begin{definition}\label{def_mcminimalgl}
    A relative character variety with $\delta\geq 0$ is called MC-minimal.
\end{definition}

For $k=3$, this implies the following, which we refer to as the ``fitting into a square" property.

\begin{lemma}\label{lem_glrfitsquareproperty}(``fitting into a square" property)
    MC-minimal character varieties with $k=3$ satisfy $\lambda_1^0+\lambda_2^0+\lambda_3^0\leq r$.
\end{lemma}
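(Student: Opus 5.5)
This is a direct unwinding of Definition \ref{def_mcminimalgl} together with the defect formula \eqref{eq_delta}, specialized to $k=3$. No combinatorics of the full partitions is needed; only the largest parts $\lambda_i^0$ enter.

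The plan has three steps.

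\textbf{Step 1.} Write out the defect for $k=3$. Substituting $k=3$ into \eqref{eq_delta} gives
$$\delta(C_1,C_2,C_3)=(3-2)r-\sum_{i=1}^3\lambda_i^0=r-\lambda_1^0-\lambda_2^0-\lambda_3^0.$$

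\textbf{Step 2.} Apply MC-minimality. By Definition \ref{def_mcminimalgl}, an MC-minimal character variety satisfies $\delta\geq 0$. Combined with Step 1 this says $r-\lambda_1^0-\lambda_2^0-\lambda_3^0\geq 0$.

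\textbf{Step 3.} Rearrange. This gives $\lambda_1^0+\lambda_2^0+\lambda_3^0\leq r$, which is the claimed inequality.

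There is no real obstacle. The only point to check is that $\lambda_i^0$ in \eqref{eq_delta} is the same quantity as in the lemma, namely the largest multiplicity of an eigenvalue of $C_i$, and the setup fixes this by the ordering $\lambda_i^0\geq\cdots\geq\lambda_i^{s_i}$.

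It is also worth explaining the name of the property. The inequality says exactly that the three largest boxes, of sizes $\lambda_1^0,\lambda_2^0,\lambda_3^0$, can be placed side by side along the top of the $r\times r$ square. The three columns then have widths $\lambda_i^0$ and fit inside the square, as in the diagram in the introduction. Each remaining box $\lambda_i^j$ with $j\geq 1$ has size at most $\lambda_i^0$, so it fits in the $i$th column.
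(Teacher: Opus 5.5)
Your proposal is correct and is exactly the paper's argument: the lemma is the $k=3$ case of the defect formula \eqref{eq_delta}, combined with the condition $\delta\geq 0$ from Definition \ref{def_mcminimalgl}. Your closing remark on why this is called ``fitting into a square'' is a harmless aside and is consistent with the column setup the paper uses afterward.
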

\begin{proof}
    This is the $k=3$ case of the formula for the defect, Equation \ref{eq_delta}.
\end{proof}

For the rest of the section, we specialize to the case of $k=3$. The dimension of our character variety is
\begin{equation}\label{eq_dim_charvar}
\sum_{i=1}^3\mathrm{dim}C_i-2r^2+2=3r^2-\sum_{i=1}^3\sum_{j=0}^{s_i}(\lambda_i^j)^2-2r^2+2=r^2-\sum_{i=1}^3\sum_{j=0}^{s_i}(\lambda_i^j)^2+2=\Delta+2
\end{equation}
where $\Delta\coloneqq r^2-\sum_{i=1}^3\sum_{j=0}^{s_i}(\lambda_i^j)^2$. Since we have the ``fitting into a square" property, this means that any MC-minimal character variety is associated to the data
\begin{itemize}
    \item $r=q_1+q_2+q_3$ (split the $r\times r$ square into three columns of width $q_i$)
    \item $r=\sum_{j=0}^{s_i}\lambda_i^j$ with $\lambda_i^j\leq q_i$ for $i=1,2,3$
    \item $\lambda_i^0\geq \lambda_i^1\geq ...\geq \lambda_i^{s_i}$ for $i=1,2,3$
    \item the (box) dimension is the number of empty boxes, $\Delta=r^2-\sum_{i=1}^3\sum_{j=0}^{s_i}(\lambda_i^j)^2$.
\end{itemize}

\begin{example}\label{ex_gl_dim}
    Here is an example of a configuration with $r=10$, $q_1=5, q_2=3$, and $q_3=2$ given in Figure \ref{pic_example_gldim}. The first column is $(5,5)$, the second is $(3,3,3,1)$, and the third is $(2,2,2,2,1,1)$. The box dimension is $4$ with no empty boxes in the first column and two empty boxes in each of the second and third columns.
    
\begin{figure}[h]
\begin{center}
\begin{tikzpicture}[scale=0.7]
\draw[very thick] (0,0) rectangle (7,7);
\filldraw[color=black, fill=gray!15, thick] (0,7) rectangle (3.5,3.5) node[pos=.5] {$5$};
\filldraw[color=black, fill=gray!15, thick] (0,0) rectangle (3.5,3.5) node[pos=.5] {$5$};
\filldraw[color=black, fill=gray!15, thick] (3.5,7) rectangle (5.6,4.9) node[pos=.5] {$3$}; 
\filldraw[color=black, fill=gray!15, thick] (5.6,4.9) rectangle (3.5,2.8) node[pos=.5] {$3$}; 
\filldraw[color=black, fill=gray!15, thick] (5.6,0.7) rectangle (3.5,2.8) node[pos=.5] {$3$}; 
\filldraw[color=black, fill=gray!15, thick] (3.5,0.7) rectangle (4.2,0) node[pos=.5] {$1$}; 
\filldraw[color=black, fill=gray!15, thick] (5.6,7) rectangle (7,5.6) node[pos=.5] {$2$}; 
\filldraw[color=black, fill=gray!15, thick] (5.6,4.2) rectangle (7,5.6) node[pos=.5] {$2$}; 
\filldraw[color=black, fill=gray!15, thick] (5.6,4.2) rectangle (7,2.8) node[pos=.5] {$2$}; 
\filldraw[color=black, fill=gray!15, thick] (5.6,1.4) rectangle (7,2.8) node[pos=.5] {$2$}; 
\filldraw[color=black, fill=gray!15, thick] (5.6,1.4) rectangle (6.3,0.7) node[pos=.5] {$1$}; 
\filldraw[color=black, fill=gray!15, thick] (5.6,0) rectangle (6.3,0.7) node[pos=.5] {$1$}; 
\end{tikzpicture}
\caption{Diagram for Example \ref{ex_gl_dim}.}
\label{pic_example_gldim}
\end{center}
\end{figure}
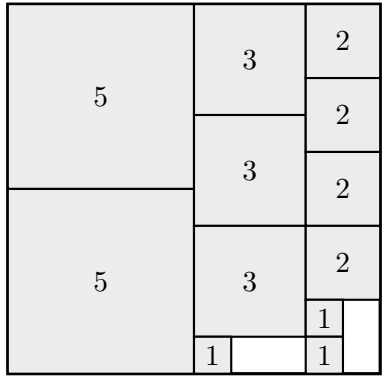
\end{example}

\begin{definition}\label{def_glrconfigurationdata}
    Data such as above will be called $GL(r)$-configuration data for rank $r$ local systems on $\mathbb{P}^1\backslash \{t_1,t_2,t_3\}$.
\end{definition}

Note that in \cite{simp_quadpaper}, this would be called minimal partition data, but we will be talking about partitions that minimize dimension within a column. To avoid confusion, we just say configuration instead. Using this configuration description, we get a lower bound on the rank based on the dimension.

\begin{theorem}\label{thm_GLrboundsec2}(Theorem \ref{thm_GLrbound})
    Given $\Delta>0$ even, any $GL(r)$-configuration of dimension $\Delta$ has rank $r\leq 3\Delta+6$ and $r=3\Delta+6$ is achieved for exactly the configuration with $(q_1,q_2,q_3)=\left(\frac{r}{2}, \frac{r}{3}, \frac{r}{6}\right)$ and columns $\left(\frac{r}{2},\frac{r}{2}\right),\left(\frac{r}{3},\frac{r}{3},\frac{r}{3}\right),$ and $\left(\frac{r}{6},\frac{r}{6},\frac{r}{6},\frac{r}{6},\frac{r}{6},\frac{r}{6}-1,1\right)$.
    \begin{figure}[h]\label{pic_minglcases}
    \begin{center}
        \begin{tikzpicture}[scale=1]
        \draw[very thick] (0,0) rectangle (7,7);
        \filldraw[color=black, fill=gray!15, thick] (0,7) rectangle (3.5,3.5) node[pos=.5] {$\frac{r}{2}$};
        \filldraw[color=black, fill=gray!15, thick] (0,0) rectangle (3.5,3.5) node[pos=.5] {$\frac{r}{2}$};
        \filldraw[color=black, fill=gray!15, thick] (3.5,7) rectangle (5.83,4.66) node[pos=.5] {$\frac{r}{3}$};
        \filldraw[color=black, fill=gray!15, thick] (3.5,2.33) rectangle (5.83,4.66) node[pos=.5] {$\frac{r}{3}$};
        \filldraw[color=black, fill=gray!15, thick] (3.5,2.33) rectangle (5.83,0) node[pos=.5] {$\frac{r}{3}$};
        \filldraw[color=black, fill=gray!15, thick] (5.83,7) rectangle (7,5.83) node[pos=.5] {$\frac{r}{6}$};
        \filldraw[color=black, fill=gray!15, thick] (5.83,4.66) rectangle (7,5.83) node[pos=.5] {$\frac{r}{6}$};
        \filldraw[color=black, fill=gray!15, thick] (5.83,4.66) rectangle (7,3.5) node[pos=.5] {$\frac{r}{6}$};
        \filldraw[color=black, fill=gray!15, thick] (5.83,2.33) rectangle (7,3.5) node[pos=.5] {$\frac{r}{6}$};
        \filldraw[color=black, fill=gray!15, thick] (5.83,2.33) rectangle (7,1.16) node[pos=.5] {$\frac{r}{6}$};
        \filldraw[color=black, fill=gray!15, thick] (5.83,1.16) rectangle (6.61,0.388) node[pos=.5] {\footnotesize{$\frac{r}{6}-1$}};
        \filldraw[color=black, fill=gray!15, thick] (5.83,0.388) rectangle (6.22,0) node[pos=.5] {$1$};
        \end{tikzpicture}
        \caption{Diagram for the $GL(r)$ configurations that achieve minimal dimension.}
    \end{center}
    \end{figure}
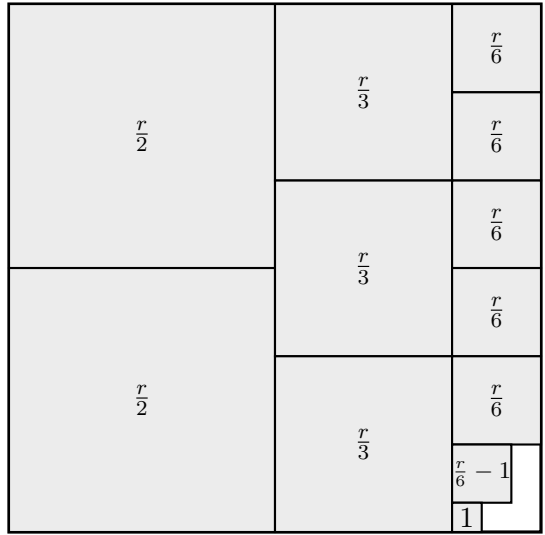
\end{theorem}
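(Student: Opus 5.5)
The plan is to work column by column. Since $r=q_1+q_2+q_3$, the box dimension splits as
$$\Delta=\sum_{i=1}^3\Delta_i,\qquad \Delta_i\coloneqq rq_i-\sum_j(\lambda_i^j)^2=\sum_j\lambda_i^j(q_i-\lambda_i^j)\geq 0,$$
where $\Delta_i$ counts the empty boxes in the $i$th column. I will bound each $\Delta_i$ from below using only $q_i$ and $r$, and then do a short case analysis on which $q_i$ divide $r$.

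\textbf{Step 1 (column bounds).} Write $r=n_iq_i+c_i$ with $0\le c_i<q_i$. The function $\lambda(q-\lambda)$ is concave on $[0,q]$, vanishes exactly at $0$ and $q$, and its parts have fixed sum $r$. So I expect to show:
\begin{itemize}
\item if $c_i\neq 0$, then $\Delta_i\geq c_i(q_i-c_i)\geq q_i-1$, with the minimum attained by $n_i$ parts $q_i$ and one part $c_i$;
\item if $c_i=0$ and $\Delta_i>0$, then at least two parts lie strictly between $0$ and $q_i$, and $\Delta_i\geq 2(q_i-1)$, with equality exactly for $(q_i,\dots,q_i,q_i-1,1)$.
\end{itemize}
For both bullets the argument is an exchange argument: merging two non-full parts (or moving mass from the smaller to the larger) does not increase $\Delta_i$, so one reduces to the stated extremal partitions.

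\textbf{Step 2 (case analysis).} Order $q_1\ge q_2\ge q_3$. If all three $q_i$ divide $r$, set $a_i=r/q_i$. Then $\sum 1/a_i=1$, so $(a_1,a_2,a_3)\in\{(3,3,3),(2,4,4),(2,3,6)\}$. Since $\Delta>0$, some column is nonzero, and Step 1 gives $\Delta\geq 2(\min_i q_i-1)\geq 2(r/6-1)$. Hence $r\leq 3\Delta+6$, and equality forces type $(2,3,6)$ with only the $r/6$ column nonzero and equal to $(r/6,\dots,r/6,r/6-1,1)$. The other two columns must then be $(r/2,r/2)$ and $(r/3,r/3,r/3)$, which is exactly the stated extremal configuration.

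If at least two $q_i$ fail to divide $r$, Step 1 gives $\Delta\geq q_i+q_j-2$. The third width is at most $r/2$ (or the third column also fails to divide, in which case $\Delta\ge r-3$), so $\Delta\geq r/2-2$ and the inequality is strict for $r>0$.

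The remaining case is exactly one non-divisor, say $q_k$, with $q_i=r/a$ and $q_j=r/b$. Then $q_k=r(1-1/a-1/b)$. The pairs $(a,b)$ for which $q_k$ would divide $r$ are excluded, so $q_k$ is a rational multiple of $r$ with denominator dividing $\mathrm{lcm}(a,b)$. I would use the sharper bound $\Delta_k\geq c_k(q_k-c_k)\geq \tfrac{q_k}{2}\min(c_k,q_k-c_k)$. When $1/a+1/b\le 3/4$ we have $q_k\geq r/4$. When $1/a+1/b>3/4$ only $(2,2)$ and $(2,3)$ are possible, and both give divisors, so this case is excluded. Then $c_k$ is a nonzero multiple of $r/\mathrm{lcm}$-type quantities, which should give a quadratic lower bound on $\Delta_k$. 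Only a finite list of small $(a,b)$ needs checking before the crude bound $\Delta\ge q_k-1\ge r/3-1$ takes over.

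\textbf{Main obstacle.} I expect the hardest part to be this last case: showing $q_k-1$ (or the better $c_k(q_k-c_k)$) exceeds $r/3-2$ when $q_k$ is small. A small $q_k$ forces $1/a+1/b$ close to $1$, and I will handle that by the Egyptian-fraction enumeration above. Parity, via $\Delta$ being even, should dispose of the few borderline values of $r$.
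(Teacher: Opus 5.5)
Your argument is correct, but it is organized differently from the paper's proof.

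\textbf{The paper's route.} The paper writes $r=n_iq_i+c_i$ with signed remainders $c_i\in(-q_i/2,q_i/2]$, assumes $\Delta\le r/3-2$, and works through the ordered columns one at a time. It rules out $n_1=1$ and $n_1\ge 4$, then checks $|c_1|\le 2$ by hand to get $c_1=0$, so $q_1\in\{r/2,r/3\}$. It then shows $n_2\in\{3,4\}$ and $c_2=0$. The paper explicitly presents this as a ``mini-version'' of its quadratic argument.

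\textbf{Your route.} You split instead by how many widths divide $r$.
\begin{itemize}
\item When all three divide, the Egyptian-fraction list $(3,3,3),(2,4,4),(2,3,6)$ together with $\Delta_i\ge 2(q_i-1)$ gives both the bound and the uniqueness of the extremal configuration at once. This makes the equality case more transparent than in the paper, where it is only implicit in the last subcase.
\item When two or more widths fail to divide, $\Delta_i\ge q_i-1$ already gives $\Delta\ge r/2-2$.
\end{itemize}

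\textbf{Trade-off.} Your split relies on $\Delta\ge\Delta_k$ and $\Delta\ge\Delta_i+\Delta_j$, i.e.\ on every column having nonnegative dimension. So it does not carry over directly to the quadratic setting, where a column can have dimension $-q_i$ (Lemma \ref{lem_negcol}). That is what the paper's column-by-column scheme is built to handle.

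\textbf{Finishing the one-non-divisor case.} This case is left vague in your proposal; it closes quickly, and you should write it out.
\begin{itemize}
\item The relevant threshold is $q_k<r/3$, not $r/4$. If $q_k\ge r/3$, then already $\Delta\ge\Delta_k\ge q_k-1>r/3-2$.
\item Take $2\le a\le b$. Then $q_k<r/3$ forces $1/a+1/b>2/3$, so $a=2$ and $b\in\{3,4,5\}$.
\item The choices $b=3,4$ make $q_k$ a divisor of $r$, so they fall outside this case.
\item That leaves only $q=(r/2,3r/10,r/5)$ with $r=10t$. Here $q_k=3t$ and $c_k=t$, so $\Delta\ge\Delta_k\ge c_k(q_k-c_k)=r^2/50$.
\item This exceeds $r/3-2$ for every $r$, since $r^2/50-r/3+2$ has negative discriminant.
\end{itemize}
Your exchange argument does give $\Delta_k\ge c_k(q_k-c_k)$: merging two non-full parts $x,y$ changes $\sum x(q-x)$ by $-2xy$ if $x+y\le q$ and by $-2(q-x)(q-y)$ otherwise. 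No parity argument is needed anywhere. In fact $\Delta$ is automatically even, since $\sum_j(\lambda_i^j)^2\equiv r\pmod 2$ for each column.
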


From the above discussion and Equation \ref{eq_dim_charvar}, it follows that
\begin{theorem}\label{thm_GLrbound_charvarversion}
    For $d>2$ even, any MC-minimal character variety with $G=GL(r)$ of dimension $d$ on $\mathbb{P}^1\backslash \{t_1,t_2,t_3\}$ has rank $r\leq 3d$. 
\end{theorem}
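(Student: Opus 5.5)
The plan is to deduce the statement directly from Theorem \ref{thm_GLrboundsec2} by translating between the character variety and its associated $GL(r)$-configuration. Take a non-empty MC-minimal character variety $\mathscr{X}^{\mathrm{irred}}(r,C_1,C_2,C_3)$ of dimension $d>2$, $d$ even.

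\textbf{Step 1: reduce to the semisimple case.} First treat semisimple $C_i$. By Proposition \ref{prop_dimX} and Equation \ref{eq_dim_charvar}, we have $d=\Delta+2$, where $\Delta=r^2-\sum_{i,j}(\lambda_i^j)^2$. Hence $\Delta=d-2$ is even and positive.

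\textbf{Step 2: build the configuration.} MC-minimality gives $\delta\ge 0$, so Lemma \ref{lem_glrfitsquareproperty} yields $\lambda_1^0+\lambda_2^0+\lambda_3^0\le r$. To obtain an honest configuration with $q_1+q_2+q_3=r$, I choose $q_i\ge\lambda_i^0$. For instance, set $q_i=\lambda_i^0$ for $i=1,2$ and $q_3=r-\lambda_1^0-\lambda_2^0\ge\lambda_3^0$. Then every part satisfies $\lambda_i^j\le q_i$, so this is $GL(r)$-configuration data of box dimension $\Delta$.

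\textbf{Step 3: apply the configuration bound.} Theorem \ref{thm_GLrboundsec2} now gives
$$r\le 3\Delta+6=3(d-2)+6=3d.$$

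\textbf{Step 4: the non-semisimple case.} If some $C_i$ is not semisimple, its centralizer is smaller than that of its semisimplification $C_i^{ss}$, which has the same eigenvalue multiplicities. So $\dim C_i\ge\dim C_i^{ss}$, and therefore
$$d\ge \Delta+2,$$
where $\Delta$ is computed from the multiplicities. The defect $\delta$ depends only on the multiplicities $\lambda_i^0$, so the fitting-into-a-square property still holds. Two cases arise:
\begin{itemize}
\item If $\Delta>0$, Theorem \ref{thm_GLrboundsec2} gives $r\le 3\Delta+6\le 3d$.
\item If $\Delta\le 0$, I would still need a bound $r\le 3d$. I expect this to be handled by the remark that the bound depends only on box sizes. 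Alternatively, I would use the general inequality $\dim C_i\ge\dim C_i^{ss}+2$ for non-semisimple classes, which is a one-line computation on Jordan blocks, to absorb the difference.
\end{itemize}

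\textbf{Main obstacle.} The only real subtlety is Step 4 when $\Delta$ is odd or non-positive, since Theorem \ref{thm_GLrboundsec2} is stated only for $\Delta>0$ even. In the semisimple case, which is the main case intended by the paper, parity is automatic from $d$ even. I would therefore state the non-semisimple extension by monotonicity in $\Delta$, i.e. $r\le 3\Delta+6$ for all $\Delta>0$, if the proof of Theorem \ref{thm_GLrbound} gives it.
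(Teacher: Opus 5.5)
Your Steps 1--3 are the paper's argument: $d=\Delta+2$ by Equation \ref{eq_dim_charvar}, and MC-minimality gives the fitting-into-a-square property. Your explicit choice of $q_i$ is fine once you relabel the points so that $q_1\ge q_2\ge q_3$. Theorem \ref{thm_GLrboundsec2} with $\Delta=d-2>0$ then gives $r\le 3d$, so the semisimple case is complete.

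The paper covers non-semisimple classes only by the remark after the centralizer discussion. Your Step 4 is where your proposal has a genuine gap: the semisimplification route fails for two reasons.
\begin{itemize}
\item MC-minimality is governed by the rank change $r\mapsto r+\delta$ of Subsection \ref{subsec_middleconv}. There $\delta_i$ is the largest part $\lambda_j(i)$ of the dual of the Jordan partition, i.e.\ an eigenspace dimension, not an algebraic multiplicity. So $\delta\ge 0$ does not give fitting into the square for the semisimplified multiplicities. For example, a unipotent class of Jordan type $(2,\dots,2)$ has eigenspace dimension $r/2$, but its semisimplification is a single part of size $r$, which fits in no column of width $<r$. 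Even your case $\Delta>0$ therefore applies Theorem \ref{thm_GLrboundsec2} to data that need not be a configuration.
\item The patch $\dim C_i\ge \dim C_i^{ss}+2$ cannot handle $\Delta^{ss}=0$. Such data exist for arbitrarily large $r$ (the types $(\frac r3,\frac r3,\frac r3)$, $(\frac r2,\frac r4,\frac r4)$, $(\frac r2,\frac r3,\frac r6)$). An additive constant in $d$ therefore gives no bound on $r$.
\end{itemize}

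The fix is to choose the correct boxes rather than argue by inequality. At $t_i$, take as parts the conjugate partitions $\mu'(a)$ of the Jordan types $\mu(a)$, over all eigenvalues $a$; these still sum to $r$. The centralizer has dimension $\sum_a\sum_{s,t}\min(\mu_s(a),\mu_t(a))=\sum_a\sum_k \mu'_k(a)^2$. Hence $\dim C_i=r^2-\sum_j(\lambda_i^j)^2$ holds exactly for these parts, and $d=\Delta+2$ exactly. Moreover, the largest such part at $t_i$, namely $\max_a\mu'_1(a)$, is exactly the largest possible $\delta_i$ in the defect. So MC-minimality is precisely the fitting-into-a-square property for these boxes. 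Your Steps 2--3 then apply verbatim, with no case split and with $\Delta=d-2>0$.

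Your parity worry also never arises. Each column's parts sum to $r$, so $\Delta\equiv r-3r\equiv 0 \pmod 2$ for any configuration.
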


While we will show that the bound is strict on the level of configurations, to show strictness for character varieties, one needs to deal with the problem of non-emptiness, as discussed in the introduction. If there is a character variety with the configuration giving equality, then the bound would be strict for the character variety level as well.

\subsection{Quadratic MC-minimal Character Varieties}\label{subsec_quadsetup}

Recall that we denote by $O(r)$ the group of linear transformations preserving a symmetric nondegenerate bilinear form on an $r$-dimensional complex vector space and $Sp(r)$ the group preserving an anti-symmetric nondegenerate bilinear form instead. We summarize the set-up and results from \cite{simp_quadpaper}. A quadratic relative character variety on $\mathbb{P}^1\backslash\{t_1,...,t_k\}$ with monodromy $C_1,...,C_k$ is given by
$$\mathscr{X}^{\mathrm{irred}}(r,\varepsilon,C_1,...,C_k)=\{\rho:\pi_1(\mathbb{P}^1\backslash\{t_1,...,t_k\})\rightarrow G_{\varepsilon}\text{ such that }\rho\text{ is irred, }\rho(\gamma_i)\in C_i\}/G_{\varepsilon}.$$
Here $\varepsilon=\pm 1$ where $G_{1}=O(r)$ and $G_{-1}=Sp(r)$. We have denoted by $C_i$ conjugacy classes in $G_{\varepsilon}$ and $\gamma_i$ the monodromy transformation around the point $t_i$. Equivalently, one can think of this as the moduli of irreducible quadratic local systems of type $\varepsilon$ on $\mathbb{P}^1\backslash\{t_1,...,t_k\}$ with monodromy around $t_i$ in $C_i$ where a quadratic local system of type $\varepsilon$ is a local system that is isomorphic to its dual and has monodromy in $G_{\varepsilon}$.

\begin{lemma}(Lemma 2.4 of \cite{simp_quadpaper})
    The dimension of $\mathscr{X}^{\mathrm{irred}}(r,\varepsilon, C_1,...,C_k)$ is
    $$\mathrm{dim}\mathscr{X}^{\mathrm{irred}}(r,\varepsilon,C_1,...,C_k)=\mathrm{dim}(C_1)+...+\mathrm{dim}(C_k)-2\mathrm{dim}(G_{\varepsilon}).$$\qed
\end{lemma}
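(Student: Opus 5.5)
The plan is to realize $\mathscr{X}^{\mathrm{irred}}(r,\varepsilon,C_1,\dots,C_k)$ as a quotient of a fiber of a multiplication map and to compute dimensions at each stage. Since $\pi_1(\mathbb{P}^1\backslash\{t_1,\dots,t_k\})$ is generated by $\gamma_1,\dots,\gamma_k$ subject only to $\gamma_1\cdots\gamma_k=1$, the representation variety is
$$\mathcal{R}=\{(A_1,\dots,A_k)\in C_1\times\cdots\times C_k : A_1\cdots A_k=\mathrm{Id},\ \text{irreducible}\}=\mu^{-1}(\mathrm{Id})^{\mathrm{irred}},$$
where $\mu:C_1\times\dots\times C_k\to G_\varepsilon$ is the product map. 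The character variety is $\mathcal{R}/G_\varepsilon$ under simultaneous conjugation. The claim then splits into two statements: $\dim\mathcal{R}=\sum_i\dim C_i-\dim G_\varepsilon$, and $G_\varepsilon$ acts on $\mathcal{R}$ with finite stabilizers, so the quotient loses another $\dim G_\varepsilon$.

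For the first statement I would show that $d\mu$ is surjective at every irreducible point of $\mu^{-1}(\mathrm{Id})$; then $\mathcal{R}$ is smooth of the expected dimension (when nonempty). The tangent space to $C_i$ at $A_i$ is $\{[X_i,A_i]\}$ with $X_i\in\mathfrak{g}_\varepsilon$. After right-translating to the Lie algebra, the image of $d\mu$ is spanned by the subspaces $\mathrm{Ad}(A_1\cdots A_{i-1})(\mathrm{Ad}(A_i)-1)\mathfrak{g}_\varepsilon$. Use the nondegenerate invariant trace form $\langle X,Y\rangle=\mathrm{tr}(XY)$ on $\mathfrak{g}_\varepsilon$, which is nondegenerate for $\mathfrak{so}(r)$ and $\mathfrak{sp}(r)$. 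An element $Y$ orthogonal to the whole image satisfies, inductively in $i$, $\mathrm{Ad}(A_i)Y'=Y'$ for suitably conjugated $Y'$. Unwinding the induction shows that $Y$ commutes with every $A_i$. This is the standard computation, and I expect it to be the main step to write carefully.

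Irreducibility then forces $Y=0$. By Schur's lemma, $Y$ is a scalar matrix, and the only scalar in $\mathfrak{so}(r)$ or $\mathfrak{sp}(r)$ is $0$, since elements $Y$ satisfy $Y^{t}J+JY=0$, so a scalar $c$ gives $2cJ=0$. Thus the image of $d\mu$ is all of $\mathfrak{g}_\varepsilon$, and $\dim\mathcal{R}=\sum_i\dim C_i-\dim G_\varepsilon$.

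For the second statement, the stabilizer of an irreducible tuple consists of elements of $G_\varepsilon$ commuting with every $A_i$. By Schur these are scalars $c\,\mathrm{Id}$ preserving the form, so $c=\pm1$, a finite group. Hence all orbits have dimension $\dim G_\varepsilon$. The action is also proper on the irreducible locus (stable points in the GIT sense), so the quotient has dimension $\dim\mathcal{R}-\dim G_\varepsilon$. Combining the two computations gives $\sum_i\dim C_i-2\dim G_\varepsilon$.
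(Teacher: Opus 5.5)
Your proof is correct. The paper proves nothing here itself: the lemma is imported from Simpson with a \qed, as the $GL(r)$ formula of Proposition \ref{prop_dimX} is, so there is no in-text argument to compare with.

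What you wrote is the standard argument:
\begin{itemize}
\item Realize the representation space as the fiber over $\mathrm{Id}$ of the multiplication map $C_1\times\cdots\times C_k\to G_\varepsilon$.
\item Show this map is a submersion at irreducible points. The trace-form orthogonal complement of $\mathrm{im}(d\mu)$ is the infinitesimal centralizer of the tuple, and Schur kills it because $\mathfrak{g}_\varepsilon$ contains no nonzero scalars.
\item Divide by $G_\varepsilon$, whose stabilizers on irreducible tuples are $\{\pm 1\}$, so every orbit has dimension $\dim G_\varepsilon$.
\end{itemize}
Your inductive step, giving $\mathrm{Ad}(P_i^{-1})Y=Y$ for $P_i=A_1\cdots A_i$, checks out.

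Two conventions are worth stating explicitly:
\begin{itemize}
\item ``Irreducible'' should be read as irreducible as a $GL(r)$-representation. Your use of Schur requires this, and it is what the paper means.
\item The formula is the local dimension at every point of a nonempty variety. Your argument actually gives this, since it shows the irreducible locus is smooth of the stated dimension.
\end{itemize}
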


Unlike in the general linear situation, we have three distinct types of eigenvalues that affect the dimension of the conjugacy classes differently. 
\begin{itemize}
    \item The first type, which we refer to as type $m$, are paired eigenvalues $z\neq z^{-1}$. The eigenvalues $z,z^{-1}$ will occur with the same multiplicity. We denote by $m_i^j$ the multiplicity of the $j$th pair of type $m$ eigenvalues at $t_i$.
    \item The second and third types, which we refer to as type $e$ and $f$ respectively, are half-unital eigenvalues where $z=z^{-1}$. This means that we are looking at Jordan blocks of the eigenvalues $1$ or $-1$. At point $i$, we take the eigenvalue with the biggest Jordan block to be type $e$. We refer to their multiplicities as $e_i^j$ and $f_i^j$ respectively for the size of the $j$th Jordan block of that type at point $t_i$.
\end{itemize}

Unless otherwise stated, we will always order the multiplicities of the eigenvalues of each type in decreasing order, i.e., $e_i^0\geq e_i^1\geq...\geq e_i^{s_{i,e}}$, $f_i^0\geq f_i^1\geq ...\geq f_i^{s_{i,f}}$, and $m_0^0\geq ...\geq m_i^{s_{i,m}}$. Simpson computes the dimension using these multiplicities.
\begin{prop}(Prop 2.5 of \cite{simp_quadpaper})
    Suppose $\mathcal{F}$ is an irreducible quadratic local system of rank $r$ of type $\varepsilon$. Twice the dimension at $\mathcal{F}$ of the character variety $\mathscr{X}^{\mathrm{irred}}(r,\varepsilon,C_1,...,C_k)$ is
    $$(k-2)r^2-2\sum_{i,j}(m_i^j)^2-\sum_{i,j}(e_i^j)^2-\sum_{i,j}(f_i^j)^2-\varepsilon\left((k-2)r-\sum_{i,j}(-1)^{j}e_i^j-\sum_{i,j}(-1)^{j}f_i^j\right).$$\qed
\end{prop}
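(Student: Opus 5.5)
The plan is to start from Lemma 2.4 of \cite{simp_quadpaper}, $\mathrm{dim}\mathscr{X}^{\mathrm{irred}}=\sum_i \mathrm{dim}C_i-2\,\mathrm{dim}G_{\varepsilon}$, and reduce everything to computing centralizer dimensions. We have $\mathrm{dim}G_{\varepsilon}=\frac{r(r-\varepsilon)}{2}$, which covers both $O(r)$ and $Sp(r)$, and $\mathrm{dim}C_i=\mathrm{dim}G_{\varepsilon}-\mathrm{dim}Z(C_i)$. Hence
$$2\,\mathrm{dim}\mathscr{X}^{\mathrm{irred}}=(k-2)\,r(r-\varepsilon)-\sum_{i=1}^k 2\,\mathrm{dim}Z(C_i).$$
Expanding $(k-2)r(r-\varepsilon)=(k-2)r^2-\varepsilon(k-2)r$ already gives the $(k-2)r^2$ term and the $-\varepsilon(k-2)r$ term. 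It therefore remains to show, for each $i$,
$$2\,\mathrm{dim}Z(C_i)=2\sum_j(m_i^j)^2+\sum_j(e_i^j)^2+\sum_j(f_i^j)^2-\varepsilon\sum_j(-1)^j e_i^j-\varepsilon\sum_j(-1)^j f_i^j .$$

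To compute $Z(C_i)$, take $A\in C_i$ with Jordan decomposition $A=su$. The centralizer of $s$ in $G_{\varepsilon}$ splits according to generalized eigenspaces. For a pair $z\neq z^{-1}$, the eigenspaces $V_z$ and $V_{z^{-1}}$ are isotropic and put in duality by the form, so this block of the centralizer is $GL(V_z)$. For $z=\pm1$, the form restricts nondegenerately to $V_{\pm1}$ with the same symmetry type $\varepsilon$, giving a factor $G_{\varepsilon}(V_{\pm1})$. Thus $Z(A)$ is a product of centralizers of unipotent (or nilpotent) elements, namely $u|_{V_z}$ in $GL$ and $u|_{V_{\pm1}}$ in $G_{\varepsilon}$.

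For the paired part the classical $GL$ count gives centralizer dimension $\sum_j (m^j)^2$, where the $m^j$ are read as the column lengths (dual partition) of the Jordan type. In the semisimple case this is the single block $m^2$. For the $\pm1$ parts, use the classical formula for the centralizer of a nilpotent element $N$ in $\mathfrak{so}$ or $\mathfrak{sp}$. With $\lambda$ the Jordan type and $\lambda^*$ its dual partition,
$$\mathrm{dim}Z=\tfrac12\sum_j(\lambda^*_j)^2-\tfrac{\varepsilon}{2}\,\#\{\text{odd parts of }\lambda\}.$$
One derives this by identifying the centralizer Lie algebra with the $\varepsilon$-skew part of $\mathrm{End}_{\mathbb{C}[N]}(V)$. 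The full endomorphism algebra has dimension $\sum(\lambda_j^*)^2$, and the involution's trace contributes $\varepsilon$ times the number of odd Jordan blocks. Finally $\#\{\text{odd parts of }\lambda\}=\sum_j(-1)^j\lambda^*_j$, so setting $e_i^j=\lambda^*_j$ (and likewise for $f$) gives exactly the required $\sum (e_i^j)^2-\varepsilon\sum(-1)^je_i^j$. In the semisimple case this reduces to $e(e-\varepsilon)$, i.e.\ twice $\mathrm{dim}\,G_\varepsilon(e)$, which is a useful sanity check.

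The main obstacle is the unipotent centralizer formula in the quadratic case, specifically tracking the sign $\varepsilon$ through the involution on $\mathrm{End}_{\mathbb{C}[N]}(V)$, and matching the paper's indexing of $e_i^j,f_i^j$ to the dual partition. I would first verify it on a single Jordan block and on $N=0$, then argue additively over isotypic pieces of the Jordan decomposition. A second point is that the dimension formula in Lemma 2.4 is valid at irreducible $\mathcal{F}$ because the stabilizer there is finite (only $\pm1$). I would simply invoke this, as the lemma is already assumed.
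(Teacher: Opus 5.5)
Your proof is correct. The paper gives no argument of its own here: it imports the formula from Simpson (Prop.~2.5 of \cite{simp_quadpaper}) without proof, so there is no competing route in the text. Your derivation is a self-contained reconstruction. It uses Lemma~2.4 with $\dim C_i=\dim G_\varepsilon-\dim Z(C_i)$ and $\dim G_\varepsilon=r(r-\varepsilon)/2$. It splits $Z(s)$ into factors $GL(V_z)$ for pairs $z\neq z^{-1}$ and $G_\varepsilon(V_{\pm1})$ for the half-unital eigenvalues. For the unipotent part it uses the classical formula $\dim\mathfrak{z}(N)=\frac12\sum_j(\lambda^*_j)^2-\frac{\varepsilon}{2}\#\{\text{odd parts of }\lambda\}$ (e.g.\ Collingwood--McGovern, Ch.~6). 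The identity $\#\{\text{odd parts}\}=\sum_{j\ge 0}(-1)^j\lambda^*_j$ is exactly what produces the alternating linear correction.

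Two remarks on the points you flagged. First, your reading of $e_i^j,f_i^j,m_i^j$ as parts of the dual partition is the only one under which the formula holds. Here the $j$th part, counting from $j=0$, is the number of Jordan blocks of size $>j$. This matches the convention the paper states in the subsection on rank~1 middle convolution. It does not match the phrase ``the size of the $j$th Jordan block'' in Subsection~2.2 read literally. That literal reading already fails for $A=\mathrm{Id}$, and for a regular unipotent in $O(3)$ it gives centralizer dimension $3$ instead of $1$.

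Second, if you prove the trace identity for $X\mapsto X^*$ on $\mathrm{End}_{\mathbb{C}[N]}(V)$ yourself, checking a single Jordan block and $N=0$ is not enough. The commutant dimension is not additive over blocks. The trace is additive over an orthogonal decomposition of $V$ into nondegenerate $N$-stable pieces $W_k$. These pieces are single blocks of odd size when $\varepsilon=1$ or even size when $\varepsilon=-1$, together with isotropically paired equal blocks of the other parity. For $k\neq l$ the involution swaps $\mathrm{Hom}_{\mathbb{C}[N]}(W_k,W_l)$ and $\mathrm{Hom}_{\mathbb{C}[N]}(W_l,W_k)$. So these cross terms count toward $\sum_j(\lambda^*_j)^2$ but contribute nothing to the trace. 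Each diagonal piece then contributes $\varepsilon$ per odd block it contains, which gives the claimed trace.
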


For the rest of this section, we assume $k=3$. Then the above says
\begin{equation}\label{eq_Deltaquadratic}
2\mathrm{dim}\mathscr{X}^{\mathrm{irred}}(r,\varepsilon,C_1,C_2,C_3)=r^2-2\sum_{i,j}(m_i^j)^2-\sum_{i,j}(e_i^j)^2-\sum_{i,j}(f_i^j)^2-\varepsilon\left(r-\sum_{i,j}(-1)^je_i^{j}-\sum_{i,j}(-1)^{j}f_i^j\right).
\end{equation}
We will define $\Delta=\beta-\varepsilon l$ as the right hand side where $\beta=r^2-2\sum_{i,j}(m_i^j)^2-\sum_{i,j}(e_i^j)^2-\sum_{i,j}(f_i^j)^2$ and $l=r-\sum_{i,j}(-1)^je_i^{j}-\sum_{i,j}(-1)^{j}f_i^j$. Later on, we will refer to $\beta$ as the box dimension and $l$ as the linear correction. Note that for the general linear case, $\Delta=\beta$ with no linear correction.

\begin{definition}\label{def_mcminimalquad}
    An irreducible quadratic character variety is called MC-minimal if its rank does not decrease under middle convolution using quadratic MC-invertible convolution kernels.
\end{definition}

Quadratic MC-minimal character varieties do not have a convenient description like the general linear case. However, in \cite{simp_quadpaper} Lemmas 4.1-4.5, Simpson shows that an irreducible MC minimal variety of rank $>2$ with $k=3$ satisfies certain similar inequalities to Lemma \ref{lem_glrfitsquareproperty}. He then calls these character varieties numerically MC-minimal.

\begin{definition}\label{def_numer_Mcmin_charvar}(\cite{simp_quadpaper})
    An irreducible character variety of rank $r>2$ is called numerically MC-minimal if it satisfies (up to permuting $i=1,2,3$)
    \begin{itemize}
        \item $m_1^j+m_2^j+m_3^j\leq r$ for all $j$
        \item $e_1^0+e_2^0+e_3^0\leq r$
        \item $m_1^j+m_2^j+e_3^0\leq r$ for all $j$
        \item $m_1^0+e_2^0+(e_3^0+f_3^0)/2\leq r$
        \item $m_1^0+e_2^0+(e_3^0+e_3^1)/2\leq r$.
    \end{itemize}
\end{definition}

He calls a choice of data $e_i^{0}\geq e_i^1\geq...\geq e_i^{s_{i,e}}, f_i^0\geq ...\geq f_i^{s_{i,f}}$, and $m_i^{0}\geq ...\geq m_i^{s_{i,m}}$ with $\sum_{j=0}^{s_{i,e}}e_i^j+\sum_{j=1}^{s_{i,f}}f_i^j+2\sum_{j=0}^{s_{i,m}}m_i^j=r$ for $i=1,2,3$ that satisfies the above inequalities, a numerically MC-minimal partition. We will refer to them as either numerically MC-minimal configurations or quadratic configurations. Simpson showed that this has a ``fitting into a square" property similar to the general linear case (Lemma \ref{lem_glrfitsquareproperty}), but now columns may overlap in a particular way. In the non-overlapping case, just like the general linear case, all squares fit into a rectangle $q_i\times r$ with $q_1+q_2+q_3=r$. In the overlapping case, there will be $q_i$ with $q_1+q_2+q_3\leq r$, where we think of $q_i$ as the width of the $i$th column, except possibly the biggest eigenvalue of the overlapping columns. In the following proposition, we drop our assumption that $q_1\geq q_2\geq q_3$. 

\begin{prop}\label{prop_quadfitsquare}(Prop 5.1 of \cite{simp_quadpaper}, fitting into a square property)
    Any numerically MC-minimal collection of partitions $(e_i^j, f_i^j, m_i^j)$ has the property that the union of squares of sizes $e_i^j\times e_i^j, f_i^j\times f_i^j$, and $m_i^j\times m_i^j$ fits into a square of size $r\times r$ in one of two ways.
    \begin{itemize}
        \item Non-overlapping case. Each collection of squares fits into a rectangle of size $q_i\times r$ with $q_1+q_2+q_3=r$.
        \item Overlapping case. There are two half-unital squares that overlap. Up to permutation of the points, assume these are $e_2^0$ and $e_3^0$, then the squares for $i=1$ fit into a rectangle of size $q_1\times r$ while the squares for $i=2,3$ are either the $e_i^0$ part or fit into a rectangle of size $q_i\times r_i$ with $r_i=r-e_i^0$. The $e_2^0$ and $e_3^0$ squares have a horizontal overlap of $\mu\geq 1$ so that $r=q_1+e_2^0+e_3^0-\mu$. Let $\nu=r-e_2^0-e_3^0$.
    \end{itemize}
\end{prop}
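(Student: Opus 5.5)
Since the configuration only fixes the side lengths of the squares, the plan is to construct a placement directly. The inequalities of Definition \ref{def_numer_Mcmin_charvar} will show that the widths we choose add up to at most $r$. First we choose the width of each column. For each $i$ let
$$q_i=\max\bigl(m_i^0,\ e_i^0,\ f_i^0\bigr),$$
ignoring any part that is absent. Every square at the point $t_i$ has side at most $q_i$. Their areas add to at most $q_i\sum_j(\cdots)$, but what we need is that the side lengths stacked vertically fit into height $r$, counting each $m$-square twice since $z$ and $z^{-1}$ both occur. This always holds, because $\sum_j e_i^j+\sum_j f_i^j+2\sum_j m_i^j=r$. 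So stacking all squares for $t_i$ in one column of width $q_i$ gives a $q_i\times r$ rectangle. It remains to fit the three columns side by side, which requires $q_1+q_2+q_3\le r$. If that inequality is strict, we widen one column to get equality, which gives the non-overlapping case.

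The main work is a case analysis on which type attains each $q_i$.
\begin{itemize}
\item If every $q_i$ equals some $m_i^0$, the first inequality with $j=0$ gives $q_1+q_2+q_3\le r$.
\item If exactly one is half-unital, the third inequality (after permuting the points) handles it.
\item If all three are $e$-type, the second inequality handles it.
\item If two of the $q_i$ are half-unital and one is an $m$, the inequalities do not directly give $m_1^0+e_2^0+e_3^0\le r$. They only give the averaged versions $m_1^0+e_2^0+(e_3^0+f_3^0)/2\le r$ and $m_1^0+e_2^0+(e_3^0+e_3^1)/2\le r$.
\end{itemize}
A half-unital maximum that is an $f$ rather than an $e$ is handled by the convention that $e$ is the type with the biggest block, so $f_i^0\le e_i^0$.

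I expect the last case to be the main obstacle, and it is exactly where overlapping appears. Suppose $m_1^0+e_2^0+e_3^0>r$. Then the $e_2^0$ and $e_3^0$ squares cannot sit side by side next to column $1$, so we let them overlap horizontally by
$$\mu=q_1+e_2^0+e_3^0-r\ge 1.$$
Put the $e_2^0$ square at the top of its column and the $e_3^0$ square at the bottom of its column. This is possible because $e_2^0+e_3^0\le r$ by the second inequality, so the two squares occupy disjoint vertical ranges of total height at most $r$. They overlap horizontally but not vertically.

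The remaining squares at $t_2$ have total height $r_2=r-e_2^0$, and we place them in the region beside and below $e_2^0$. We take their width to be
$$q_2'=\max\bigl(e_2^1,\ f_2^0,\ m_2^0\bigr),$$
and similarly $q_3'$ for $t_3$. We then check that $q_1+q_2'+q_3'\le r$ and that these narrower columns avoid the overlap region. The tool for this is the fourth and fifth inequalities: they give $e_3^1+e_3^0\le 2(r-m_1^0-e_2^0)$ and the analogous bound for $f_3^0$, which bounds the smaller parts at $t_3$ by the height actually available, namely $\nu=r-e_2^0-e_3^0$ plus the overlap geometry. We also need the symmetric bounds with the roles of $2$ and $3$ swapped, which come from permuting the points.

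If the bookkeeping in this last step does not close, I would fall back on Simpson's explicit placement from \cite{simp_quadpaper} Prop 5.1, and check the inequalities one picture at a time.
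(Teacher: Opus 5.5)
Your strategy is the natural one, and the non-overlapping cases are fine. The overlapping picture you set up ($e_2^0$ at the top, $e_3^0$ at the bottom, $\mu=m_1^0+e_2^0+e_3^0-r$) is exactly the geometry the paper uses when it builds the associated non-overlapping configuration, with $q_1=m_1^0$ and $q_i=e_i^0-\mu$ for $i=2,3$. The paper itself gives no proof of this proposition and defers to Simpson. However, the only step with real content, fitting the leftover parts in the overlapping case, is left as ``we then check''. The check you describe also targets the wrong condition.

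\textbf{What actually has to be checked.} The leftover squares at $t_2$ fill the whole height range $[0,r-e_2^0]$. That range meets the height range $[0,e_3^0]$ of the $e_3^0$ square, so the leftovers must clear that square horizontally. You need $q_2'\le r-m_1^0-e_3^0=e_2^0-\mu$, and symmetrically $q_3'\le r-m_1^0-e_2^0=e_3^0-\mu$.
\begin{itemize}
\item This is a width condition, not a height condition. The bound $q_1+q_2'+q_3'\le r$ does not imply it.
\item $\nu$ is only the height of the empty $\mu\times\nu$ gap between the two big squares and plays no role here.
\item The $t_2$ leftovers go directly below $e_2^0$, flush with its left edge, not beside it.
\end{itemize}

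\textbf{The condition does hold.} At $t_3$:
\begin{itemize}
\item Your rearranged fifth inequality gives $e_3^1\le e_3^0-2\mu$.
\item The fourth inequality gives $f_3^0\le e_3^0-2\mu$.
\item For the paired parts you need the third inequality with $m$ at $t_1,t_3$ and $e$ at $t_2$, namely $m_1^0+m_3^0+e_2^0\le r$, which is exactly $m_3^0\le e_3^0-\mu$. You never mention this one.
\end{itemize}
The bounds at $t_2$ follow by exchanging $2$ and $3$. The fifth inequality in both labelings also gives $m_1^0+e_i^0\le r$, so the big squares stay inside the frame and off column $1$. With these, your construction is complete.

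Because this argument uses several relabelings of the points at once, state explicitly that you read Definition~\ref{def_numer_Mcmin_charvar} as holding for every permutation of the points.
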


\begin{example}\label{ex_overlapping} 
    The following is an example of an overlapping numerically MC-minimal configuration. The diagram is given in Figure \ref{pic_overlappingexample}. The columns are given as $(7^m, 7^m, 4^e,2^e)$, $(10^e,6^e, 4^e)$, and $(6^e,3^m,3^m,3^f,3^f,2^e)$ where the exponent says what type the square of that size is. Here $r=20$, $q_1=7, q_2=6, q_3=3$, $\mu=3$, and $\nu=4$. The box dimension is $52$. The linear correction is $2$. 
    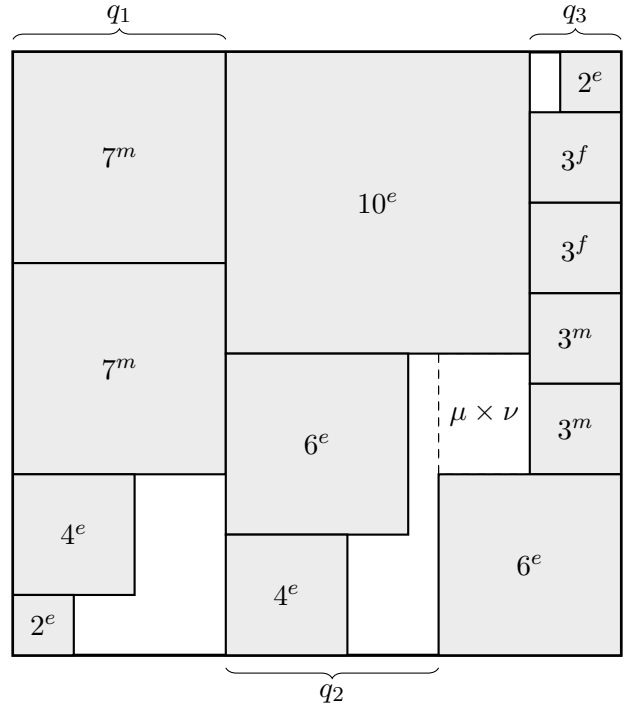
\begin{figure}[h]
    \centering
        \begin{tikzpicture}[scale=0.8]
        \draw[very thick] (0,0) rectangle (10,10);
        \filldraw[color=black, dashed, fill=white] (7,5) rectangle (8.5,3) node[pos=.5]{$\mu\times \nu$};
        \filldraw[color=black, fill=gray!15, thick] (0,10) rectangle (3.5,6.5) node[pos=.5] {$7^m$};
        \filldraw[color=black, fill=gray!15, thick] (0,3) rectangle (3.5,6.5) node[pos=.5] {$7^m$};
        \filldraw[color=black, fill=gray!15, thick] (0,3) rectangle (2,1) node[pos=.5] {$4^e$};
        \filldraw[color=black, fill=gray!15, thick] (0,0) rectangle (1,1) node[pos=.5] {$2^e$};
        \filldraw[color=black, fill=gray!15, thick] (3.5,10) rectangle (8.5,5) node[pos=.5] {$10^e$};
        \filldraw[color=black, fill=gray!15, thick] (6.5,5) rectangle (3.5,2) node[pos=.5] {$6^e$};
        \filldraw[color=black, fill=gray!15, thick] (5.5,0) rectangle (3.5,2) node[pos=.5] {$4^e$};
        \filldraw[color=black, fill=gray!15, thick] (10,10) rectangle (9,9) node[pos=.5] {$2^e$};
        \filldraw[color=black, fill=gray!15, thick] (10,9) rectangle (8.5,7.5) node[pos=.5] {$3^f$};
        \filldraw[color=black, fill=gray!15, thick] (10,7.5) rectangle (8.5,6) node[pos=.5] {$3^f$};
        \filldraw[color=black, fill=gray!15, thick] (10,6) rectangle (8.5,4.5) node[pos=.5] {$3^m$};
        \filldraw[color=black, fill=gray!15, thick] (10,3) rectangle (8.5,4.5) node[pos=.5] {$3^m$};
        \filldraw[color=black, fill=gray!15, thick] (7,3) rectangle (10,0) node[pos=.5] {$6^e$};
        \draw[dashed] (7,3) -- (7,5); 
        \draw[decorate,decoration={brace,amplitude=4pt, raise=1ex}] (7,0) -- (3.5,0) node[below=7pt, midway]{$q_2$}; 
        \draw[decorate,decoration={brace,amplitude=4pt, raise=1ex}] (0,10) -- (3.5,10) node[above=7pt, midway]{$q_1$}; 
        \draw[decorate,decoration={brace,amplitude=4pt, raise=1ex}] (8.5,10) -- (10,10) node[above=7pt, midway]{$q_3$}; 
        \end{tikzpicture}
        \caption{Diagram for an overlapping numerically MC-minimal quadratic configuration.}
        \label{pic_overlappingexample}
    \end{figure}
\end{example}

Simpson shows that for any overlapping configuration, there is a non-overlapping configuration of rank $\geq \frac{2r}{3}$ of the same dimension, which implies the following:
\begin{corollary}(Corollary 7.3 of \cite{simp_quadpaper})
    Let $R^{non}(d)$ be an upper bound for the rank of the non-overlapping case for any MC-minimal configuration of dimension $d$. Then any overlapping numerically MC-minimal configuration of dimension $d$ and rank $r$ has $r\leq \frac{3}{2}R^{non}(d)$.\qed
\end{corollary}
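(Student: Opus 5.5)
The plan is to derive the statement from the assertion quoted just before it: for every overlapping numerically MC-minimal configuration there is a non-overlapping numerically MC-minimal configuration of the same dimension whose rank $r'$ satisfies $r'\geq \frac{2r}{3}$. Granting this, the corollary is immediate. Take an overlapping configuration of dimension $d$ and rank $r$, and let $r'$ be the rank of the associated non-overlapping configuration. By the definition of $R^{non}(d)$ we have $r'\leq R^{non}(d)$. Hence $\frac{2r}{3}\leq r'\leq R^{non}(d)$, that is, $r\leq \frac{3}{2}R^{non}(d)$.

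If the construction itself has to be justified, I would work with the overlapping picture of Prop \ref{prop_quadfitsquare}. Label the points so that $e_2^0$ and $e_3^0$ overlap horizontally by $\mu\geq 1$, so $r=q_1+e_2^0+e_3^0-\mu$ and $\nu=r-e_2^0-e_3^0$. The idea is to remove the two overlapping big half-unital parts and keep the leftover configuration. The $i=2,3$ columns are then already partitions of $r_i=r-e_i^0$ of width $q_i$. Column $1$ has to be cut down to the same new rank, for instance by deleting or shrinking parts so that its total drops to the new rank.

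There are two things to check for this construction.
\begin{itemize}
\item \emph{Dimension.} Using Equation \eqref{eq_Deltaquadratic}, compare $\beta-\varepsilon l$ before and after. The terms $(e_2^0)^2$ and $(e_3^0)^2$ disappear together with the cross terms in $r^2$, and the linear correction changes by an amount controlled by the removed $e$-parts. The aim is to show the new dimension is at most $d$.
\item \emph{Rank.} The new rank is $r'$ with $r'\geq \frac{2r}{3}$ because the overlapping squares satisfy $e_2^0+e_3^0-\mu\leq$ a bounded share of $r$, which should follow from the inequalities in Definition \ref{def_numer_Mcmin_charvar}, in particular $e_1^0+e_2^0+e_3^0\leq r$ and its variants.
\end{itemize}

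Once the dimension is at most $d$ rather than exactly $d$, I would adjust the configuration to reach dimension exactly $d$. This uses monotonicity of $R^{non}$ or a padding argument that adds empty boxes without lowering the rank.

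The main obstacle is the bookkeeping in this reduction. The linear correction $l$ with its alternating signs, together with keeping the new configuration numerically MC-minimal and non-overlapping, makes the dimension comparison delicate. I would first try the case $\mu\leq\nu$ and then the case $\mu>\nu$ separately, following Simpson's Section 7. Since the paper cites the result as Corollary 7.3 of \cite{simp_quadpaper}, in the text it suffices to invoke that construction and give the two-line deduction in the first paragraph.
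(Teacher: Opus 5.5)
Your first paragraph is exactly the paper's argument. The paper cites from \cite{simp_quadpaper} that an overlapping configuration has an associated non-overlapping one of the same dimension and rank at least $\frac{2r}{3}$, and the bound follows at once.

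The problem is your fallback sketch of that construction, which would not work. Deleting $e_2^0$ and $e_3^0$ leaves columns $2$ and $3$ with totals $r-e_2^0$ and $r-e_3^0$. These differ in general, so the result is not a configuration of any single rank, and there is no common rank to cut column $1$ down to. Moreover, the rank drop $e_i^0$ can exceed $\frac{r}{3}$. In Example \ref{ex_overlapping} one has $r=20$, $e_2^0=10$, $e_3^0=6$, so deleting would give columns summing to $10$ and $14$, and $r'\geq\frac{2r}{3}$ fails. Settling for dimension at most $d$ is a further gap. $R^{non}$ is not assumed monotone, and your padding step is unspecified: you would need to preserve rank, non-overlap and numerical MC-minimality while hitting $d$ exactly.

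The missing idea is to lower every column total by exactly $\mu$ rather than deleting parts:
\begin{itemize}
\item for $i=2,3$, replace $e_i^0$ by an $e$-part of size $e_i^0-\mu=q_i$;
\item in column $1$, replace the pair $m_1^0,m_1^0$ by $e$-parts of sizes $m_1^0$ and $m_1^0-\mu=\nu$.
\end{itemize}
The new rank is $r-\mu=q_1+q_2+q_3$. Since $\mu\leq q_i$ for each $i$, one gets $\mu\leq\frac{r}{3}$, i.e.\ the new rank is at least $\frac{2r}{3}$.

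The dimension is preserved exactly, not merely bounded. The box dimension changes by $2\mu(m_1^0+e_2^0+e_3^0-\mu-r)=0$, using $r=m_1^0+e_2^0+e_3^0-\mu$. The linear correction is also unchanged. From $e_1^0+e_2^0+e_3^0\leq r$ you get $e_1^0\leq\nu$, and you also have $e_i^1\leq q_i$. These keep the modified parts at the top of their alternating sums, which therefore change by $+\mu,-\mu,-\mu$. This exactly offsets $r\mapsto r-\mu$ in $l$, so no monotonicity or padding is needed.
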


For the reader's convenience, we describe the associated non-overlapping configuration. It will be of rank $r-\mu$. Note that $\mu\leq \frac{r}{3}$ since $\mu\leq q_i$ for all $i$ so the Corollary will follow by the construction. Up to reordering the points (note that this is allowed to break our usual assumption that $q_1\geq q_2\geq q_3$), assume that the overlapping happens between the second and third columns. We have $m_1^0>e_1^0$ and $e_i^0>m_i^0$ for $i=2,3$. All squares in the second and third columns, except for $e_i^0$, fit inside a $q_i\times (r-e_i^0)$ rectangle with $q_i=e_i^0-\mu$. We can take $q_1=m_1^0$. Define the associated non-overlapping configuration as follows. 
\begin{itemize}
    \item 1st column: The first column has a pair of type $m_1^0$. Replace the first of this pair with a type $e$ part of the same size and the second of the pair with a type $e$ part of size $m_1^0-\mu$. Keep the rest of the parts in the first column the same. Note that $m_1^0-\mu=\nu>0$.
    \item 2nd and 3rd column: Replace the $e_i^0$ part with a type $e$ part of size $e_i^0-\mu$. Keep the rest of the parts in the $i$th column the same.
\end{itemize}
Now the first column still fits in $q_1\times r$ while the second and third columns fit inside $q_i\times r$ with $i=2,3$. The dimension will be the same since both the box dimension and linear correction are preserved (see Lemma 7.2 of \cite{simp_quadpaper}).

\begin{example}
   The associated non-overlapping configuration to Example \ref{ex_overlapping} is as follows. The diagram is given in Figure \ref{pic_overlappingexample_non}. The columns are given by $(7^e, 4^e, 4^e,2^e), (7^e, 6^e, 4^e)$, and $(3^e, 3^m, 3^m, 3^f, 3^f, 2^e)$. The size of the square is $r=20-\mu=17$. The box dimension and linear correction are the same as the overlapping one. Notice that the $\mu\times \nu$ box gets moved to the column that is not involved in the overlap.
   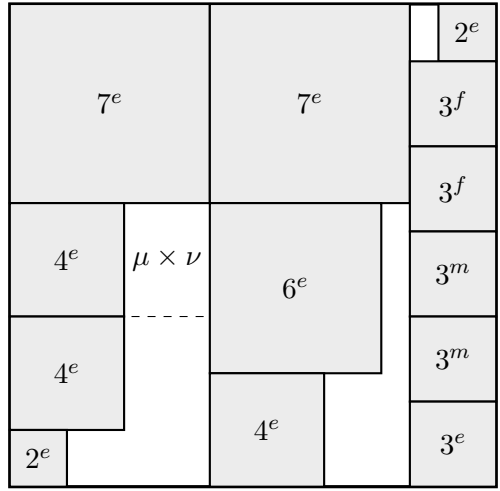
\begin{figure}[h]
    \centering
        \begin{tikzpicture}[scale=0.8]
        \draw[very thick] (0,0) rectangle (8,8);
        \filldraw[color=black, dashed, fill=white] (1.88,2.82) rectangle (3.29,4.70) node[pos=.5]{$\mu\times \nu$}; 
        \filldraw[color=black, fill=gray!15, thick] (0,8) rectangle (3.29,4.70) node[pos=.5] {$7^e$};
        \filldraw[color=black, fill=gray!15, thick] (0,4.70) rectangle (1.88,2.82) node[pos=.5] {$4^e$};
        \filldraw[color=black, fill=gray!15, thick] (0,0.94) rectangle (1.88,2.82) node[pos=.5] {$4^e$};
        \filldraw[color=black, fill=gray!15, thick] (0,0.94) rectangle (0.94,0) node[pos=.5] {$2^e$};
        \filldraw[color=black, fill=gray!15, thick] (3.29,8) rectangle (6.58,4.70) node[pos=.5] {$7^e$};
        \filldraw[color=black, fill=gray!15, thick] (3.29,4.70) rectangle (6.11,1.88) node[pos=.5] {$6^e$};
        \filldraw[color=black, fill=gray!15, thick] (3.29,0) rectangle (5.17,1.88) node[pos=.5] {$4^e$};
        \filldraw[color=black, fill=gray!15, thick] (8,8) rectangle (7.05,7.05) node[pos=.5] {$2^e$};
        \filldraw[color=black, fill=gray!15, thick] (8,7.05) rectangle (6.58,5.64) node[pos=.5] {$3^f$};
        \filldraw[color=black, fill=gray!15, thick] (8,5.64) rectangle (6.58,4.23) node[pos=.5] {$3^f$};
        \filldraw[color=black, fill=gray!15, thick] (8,4.23) rectangle (6.58,2.82) node[pos=.5] {$3^m$};
        \filldraw[color=black, fill=gray!15, thick] (8,1.41) rectangle (6.58,2.82) node[pos=.5] {$3^m$};
        \filldraw[color=black, fill=gray!15, thick] (8,1.41) rectangle (6.58,0) node[pos=.5] {$3^e$};
        \end{tikzpicture}
        \caption{Diagram for the non-overlapping associated to Example \ref{ex_overlapping}.}\label{pic_overlappingexample_non}
    \end{figure}
\end{example}

Hence, we focus on the non-overlapping case. Our combinatorial data associated to a numerically MC-minimal quadratic configuration can then be described as follows:
\begin{itemize}
    \item $r=q_1+q_2+q_3$
    \item $r=\sum_{j=0}^{s_i}\lambda_i^j$ with $\lambda_i^j\leq q_i$ for $i=1,2,3$
    \item Each $\lambda_i^j$ is assigned type $e$, type $f$, or type $m$. Then let $e_i^0\geq e_i^1\geq ...\geq e_i^{s_{i,e}}$, $f_i^0\geq f_i^1\geq...\geq f_i^{s_{i,f}}$, and $m_i^0\geq m_i^1\geq...\geq m_i^{s_{i,m}}$.
    \item The box dimension is
    $$\beta=r^2-\sum_{j=0}^{s_1}(\lambda_1^j)^2-\sum_{j=0}^{s_2}(\lambda_2^j)^2-\sum_{j=0}^{s_3}(\lambda_3^j)^2.$$
    \item The linear correction is 
    $$l=r-\sum_{i,j}(-1)^{j}e_i^j-\sum_{i,j}(-1)^{j}f_i^j.$$
    \item The dimension is
    $$\Delta=\beta-\varepsilon l$$
    with $\varepsilon=\pm 1$.
\end{itemize}

Notice that if our MC-minimal quadratic configuration comes from a non-empty MC-minimal character variety, then $\Delta$ is twice the dimension by Equation \ref{eq_Deltaquadratic}.

Different choices of linear correction produce different dimensions. In the following example, one can find choices of type $e$ and $f$ that flip the sign of the linear correction. The box dimension $\beta$ will be the same for both examples. This means that the same dimensions achieved for $\varepsilon=1$ are also achieved for $\varepsilon=-1$. In Lemma \ref{lem_colqq_qqa}, we find that this is the case for most configurations.

\begin{example}\label{ex_quad_dim}
    We use the same squares as in Example \ref{ex_gl_dim} of rank $r=10$, which has box dimension $4$, to give two examples of choices of linear correction. The configuration with columns $(5^e,5^f), (3^e,3^e,3^e,1^f),$ and $(2^e,2^f,2^e,2^e,1^e,1^e)$ has linear correction $l=-8$. The configuration with  columns $(5^e,5^f),(3^e,3^e,3^e,1^e),$ and $(2^e,2^e,2^e,2^e,1^e,1^e)$ has $l=8$. The diagrams are given in Figure \ref{pic_difflincorr}. Then dimensions are then $-4$ or $4$ respectively if $\varepsilon=1$ and $4$ or $-4$ if $\varepsilon=-1$. 
    \begin{figure}[h]%
    \centering
    \subfloat[\centering $l=r-10-4-4=-8$]{{\begin{tikzpicture}
        \draw[very thick] (0,0) rectangle (5,5);
        \filldraw[color=black, fill=gray!15, thick] (0,5) rectangle (2.5,2.5) node[pos=.5] {$5^e$};%
        \filldraw[color=black, fill=gray!15, thick] (0,0) rectangle (2.5,2.5) node[pos=.5] {$5^f$};
        \filldraw[color=black, fill=gray!15, thick] (2.5,5) rectangle (4,3.5) node[pos=.5] {$3^e$};
        \filldraw[color=black, fill=gray!15, thick] (2.5,2) rectangle (4,3.5) node[pos=.5] {$3^e$};
        \filldraw[color=black, fill=gray!15, thick] (2.5,2) rectangle (4,0.5) node[pos=.5] {$3^e$};
        \filldraw[color=black, fill=gray!15, thick] (2.5,0) rectangle (3,0.5) node[pos=.5] {$1^f$};
        \filldraw[color=black, fill=gray!15, thick] (4,5) rectangle (5,4) node[pos=.5] {$2^e$};
        \filldraw[color=black, fill=gray!15, thick] (4,3) rectangle (5,4) node[pos=.5] {$2^f$};
        \filldraw[color=black, fill=gray!15, thick] (4,3) rectangle (5,2) node[pos=.5] {$2^e$};
        \filldraw[color=black, fill=gray!15, thick] (4,1) rectangle (5,2) node[pos=.5] {$2^e$};
        \filldraw[color=black, fill=gray!15, thick] (4,1) rectangle (4.5,0.5) node[pos=.5] {$1^e$};
        \filldraw[color=black, fill=gray!15, thick] (4,0) rectangle (4.5,0.5) node[pos=.5] {$1^e$};
        \end{tikzpicture}
        }}%
    \qquad
    \subfloat[\centering $l=r-0-2-0=8$]{\begin{tikzpicture}
        \draw[very thick] (0,0) rectangle (5,5);
        \filldraw[color=black, fill=gray!15, thick] (0,5) rectangle (2.5,2.5) node[pos=.5] {$5^e$};%
        \filldraw[color=black, fill=gray!15, thick] (0,0) rectangle (2.5,2.5) node[pos=.5] {$5^e$};
        \filldraw[color=black, fill=gray!15, thick] (2.5,5) rectangle (4,3.5) node[pos=.5] {$3^e$};
        \filldraw[color=black, fill=gray!15, thick] (2.5,2) rectangle (4,3.5) node[pos=.5] {$3^e$};
        \filldraw[color=black, fill=gray!15, thick] (2.5,2) rectangle (4,0.5) node[pos=.5] {$3^e$};
        \filldraw[color=black, fill=gray!15, thick] (2.5,0) rectangle (3,0.5) node[pos=.5] {$1^e$};
        \filldraw[color=black, fill=gray!15, thick] (4,5) rectangle (5,4) node[pos=.5] {$2^e$};
        \filldraw[color=black, fill=gray!15, thick] (4,3) rectangle (5,4) node[pos=.5] {$2^e$};
        \filldraw[color=black, fill=gray!15, thick] (4,3) rectangle (5,2) node[pos=.5] {$2^e$};
        \filldraw[color=black, fill=gray!15, thick] (4,1) rectangle (5,2) node[pos=.5] {$2^e$};
        \filldraw[color=black, fill=gray!15, thick] (4,1) rectangle (4.5,0.5) node[pos=.5] {$1^e$};
        \filldraw[color=black, fill=gray!15, thick] (4,0) rectangle (4.5,0.5) node[pos=.5] {$1^e$};
        \end{tikzpicture}}%
    \caption{Diagram for configurations given in Example \ref{ex_quad_dim}.}%
    \label{pic_difflincorr}%
    \end{figure}
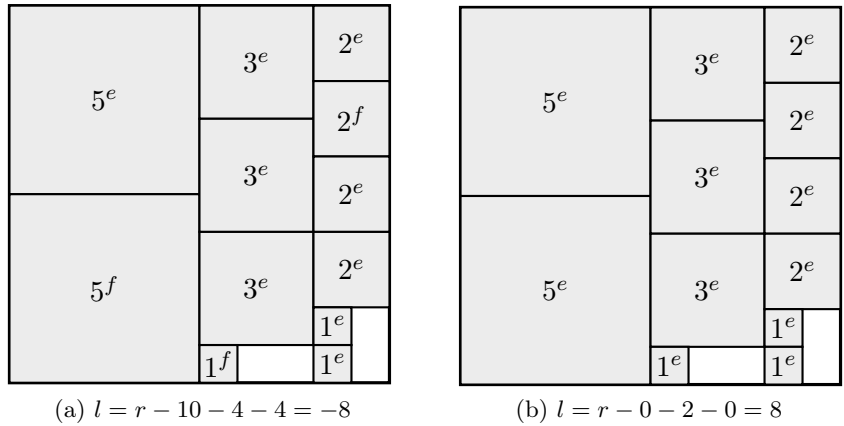
\end{example}

Using this configuration description, Simpson in Theorem 1 of \cite{simp_quadpaper} showed that there exists a bound on the rank in terms of the dimension for the non-overlapping case. He does not give his bound explicitly, but tracing through his argument gives a bound of $21252d+11173142784$, or if one more carefully applies Prop 6.2 of \cite{simp_quadpaper}, $1260d+135266208$. In this paper, we find the strictest bound on the rank of non-overlapping configurations.

\begin{theorem}\label{thm_quadthm_configsec2}(Theorem \ref{thm_quadthm_config})
    Let $d\geq 2$ be even. Any quadratic non-overlapping configuration of dimension $\Delta=2d$ has rank $r\leq 6d+36$, and $r=3\Delta+36=6d+36$ is achieved exactly by the configurations described in Example \ref{ex_mincases}.
\end{theorem}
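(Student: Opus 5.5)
The plan is to bound the rank column by column. I would write $\Delta=\beta-\varepsilon l$ as a sum of per-column contributions. Put $\Delta_i\coloneqq q_i r-\sum_j(\lambda_i^j)^2$ for the number of empty boxes in the $q_i\times r$ rectangle of the $i$th column, and let $l_i$ be that column's share of $l$, taking $l=\sum_i l_i$ with $l_i=q_i-\sum_j(-1)^je_i^j-\sum_j(-1)^jf_i^j$. Since $r=q_1+q_2+q_3$, this gives $\Delta=\sum_i(\Delta_i-\varepsilon l_i)$.

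The first step is a lower bound for each column contribution in terms of $r$ and $q_i$. Write $r=n_iq_i+c_i$ with $c_i\in(-\frac{q_i}{2},\frac{q_i}{2}]$. For a fixed width, the column minimizing $\Delta_i$ uses as many parts of size $q_i$ as possible together with a remainder. I would prove a dominance-ordering monotonicity statement: moving boxes to larger parts lowers $\beta$, and it also controls the possible values of $l_i$. From this I would derive $\Delta_i-\varepsilon l_i\geq g(|c_i|,n_i)$, where the main term is roughly $|c_i|(q_i-|c_i|)$, up to a linear correction of size at most about $n_i$. This yields a list of all attainable small contributions for a column of given $(q_i,c_i)$.

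The second step is the key reduction, the analogue of Simpson's proposition. If $\Delta=2d$ is small relative to $r$, then each column with $q_i$ large must have $|c_i|$ small; otherwise $|c_i|(q_i-|c_i|)$ already exceeds $\Delta$. When all $c_i$ are near $0$, we have $r\approx n_iq_i$ and $r=q_1+q_2+q_3$, so $\sum 1/n_i\approx1$. The integer solutions of $\sum 1/n_i=1$ are $(3,3,3)$, $(2,4,4)$ and $(2,3,6)$. A column with $q_i$ small relative to $r$ cannot have small $|c_i|$ in a useful way, so I would handle the case where $q_3$ is small separately, using the two largest columns. This should restrict $(n_1,n_2)$ to $\{(2,2),(3,3),(2,4),(2,3)\}$ with $|c_i|\leq n_i$. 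If any of these conditions fails, the per-column bounds directly give $r\leq 3\Delta+36$ or better.

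The final step is a finite case analysis over these families. In each family I would parametrize $q_i=(r-c_i)/n_i$, determine $q_3$ and $c_3$, and add the minimal column contributions from the first step, including the best sign choice of the linear corrections over $\varepsilon=\pm1$. This gives $\Delta\geq r/3-12$, i.e. $r\leq 3\Delta+36$. Equality should occur only in the $(2,3,6)$-type configurations of the referenced example: the $GL$ extremal configuration of \cref{thm_GLrboundsec2}, adjusted by the linear correction, which accounts for the shift from $+6$ to $+36$. I expect this last step to be the main obstacle. The linear corrections can partly cancel the box dimension, so the $(2,3)$ and $(2,2)$ families with small $q_3$ need careful enumeration of the column types $e$, $f$, $m$ that achieve the minimum, and the extremal constant $36$ is sensitive to exactly these small cases. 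I would attack it first by tabulating, for $|c|\leq n$, every possible value of $\Delta_i-\varepsilon l_i$ for a single column.
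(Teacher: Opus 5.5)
Your architecture matches the paper's: split $\Delta=\sum_i(\beta_i-\varepsilon l_i)$, prove dominance-order monotonicity (a move raises $\beta_i$ by at least $2$ and changes $l_i$ by at most $2$, Prop~\ref{prop_quadDelta_domorder}), write $r=n_iq_i+c_i$, reduce to $(n_1,n_2)\in\{(2,2),(3,3),(2,4),(2,3)\}$ with $|c_i|\le n_i$, then check cases. You can also drop type $m$ entirely, since replacing an $m$-pair by two $e$-parts changes neither $\beta$ nor $l$ (Lemma~\ref{lem_typemtotypee}).

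However, two of your steps fail as stated, and both fail because you underestimate the linear correction. You claim a column's minimum is $|c_i|(q_i-|c_i|)$ up to a correction of size about $n_i$. That is roughly right for odd $n_i$, where the correction is $-|c_i|$. For even $n_i$ the minimum is $(|c_i|-1)(q_i-|c_i|)$, so the correction is $q_i-|c_i|$. When $c_i=0$ the column is \emph{negative}, equal to $-q_i$ (Lemma~\ref{lem_negcol}); for example, $(q_1,q_1)$ with $q_1=\frac r2$ contributes $-\frac r2$. Hence your exclusion criterion (``$|c_i|(q_i-|c_i|)$ exceeds $\Delta$'') is unsound. In the $(2,2)$, $c_1=0$, $c_2=2$ family, $\Delta_2\ge\frac r2-3>\frac r3-12$, yet $\Delta_1+\Delta_3$ can be $-\frac r2-1$ and the total is $-4$. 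A column must beat $\frac r3-12$ \emph{plus} the possible negativity of the other columns. One way is $\beta>\frac{4r}{3}-12$ together with $|l|\le r$; another is the sharper thresholds Lemma~\ref{lem_negcol} allows, such as $\Delta_2>\frac r3-12+\frac r2+\frac r8$. That accounting is what actually produces $|c_i|\le n_i$.

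For the same reason, the final step cannot be ``add the minimal column contributions.'' In every surviving family that sum is $\le 0$: it is $-4$ for $(2,2)$ with $c_1=0$, $-r$ for $(2,4)$ with $c_1=c_2=0$, and $-\frac{2r}{3}$ for $(2,3)$ with $c_1=c_2=0$. The missing idea is to use $\Delta=2d>0$, which forces some column strictly above its minimum. Then:
\begin{itemize}
\item for each column, list the attainable values (Lemma~\ref{lem_colqq_qqa}) of the partitions a few moves below $(q,\dots,q,k)$ whose value is not already too large;
\item discard everything lower in dominance order via Prop~\ref{prop_quadDelta_domorder};
\item check a gap: every combination of the surviving values is either $\le 0$ or $\ge\frac r3-12$.
\end{itemize}
Your tabulation plan is the right tool, but this gap argument is what proves the bound.

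Your guess about the equality case is also off. The $GL$-extremal third column $(q_3,\dots,q_3,q_3-1,1)$ (five parts $q_3$) has quadratic column dimension in $\{\frac r6,\frac r2-4\}$. With $\Delta_1=-\frac r2$ and $\Delta_2=0$ this gives $\Delta\in\{-\frac r3,-4\}$. The extremal third columns are $(q_3,\dots,q_3,q_3-2,2)$ and $(q_3,\dots,q_3,q_3-3,3)$, two and three moves lower, each with $\Delta_3=\frac{5r}{6}-12$. That is where the constant $36$ comes from. Certifying exactness requires pushing the enumeration down to partitions such as $(q_3,\dots,q_3,q_3-4,4)$ and $(q_3,q_3,q_3,q_3,q_3-1,q_3-2,3)$.
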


From the above discussion, particularly Corollary 7.3 and Prop 2.5 of \cite{simp_quadpaper}, it would then follow that
\begin{theorem}\label{thm_quadthm_charvar}
    For $d\geq 2$, any numerically MC-minimal quadratic character variety on $\mathbb{P}^1\backslash \{t_1,t_2,t_3\}$ of dimension $d$ has rank $r\leq 9d+54$.
\end{theorem}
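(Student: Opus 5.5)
The plan is to deduce Theorem \ref{thm_quadthm_charvar} directly from the configuration-level bound, Theorem \ref{thm_quadthm_configsec2}, together with Simpson's reduction from overlapping to non-overlapping configurations (Corollary 7.3 of \cite{simp_quadpaper}). Let $\mathscr{X}^{\mathrm{irred}}(r,\varepsilon,C_1,C_2,C_3)$ be a non-empty, numerically MC-minimal quadratic character variety of dimension $d\geq 2$.

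\textbf{Step 1: pass to a configuration.} Record the multiplicities $e_i^j,f_i^j,m_i^j$ of the local monodromies. By Definition \ref{def_numer_Mcmin_charvar} these form a numerically MC-minimal partition. By Prop \ref{prop_quadfitsquare} the squares fit into the $r\times r$ square, either in the non-overlapping or in the overlapping way. Since the variety is non-empty, Equation \ref{eq_Deltaquadratic} (Prop 2.5 of \cite{simp_quadpaper}) shows that the configuration has $\Delta=\beta-\varepsilon l=2d$. If the monodromies are not semisimple, the argument of the remark in Subsection \ref{subsec_GLsetup} applies verbatim. Only the block sizes enter the configuration, and $\Delta$ is computed from them, so nothing changes.

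\textbf{Step 2: the non-overlapping case.} Here Theorem \ref{thm_quadthm_configsec2} gives $r\leq 6d+36$ directly. This is at most $9d+54$.

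\textbf{Step 3: the overlapping case.} Apply the construction described after Corollary 7.3. It yields a non-overlapping configuration of rank $r-\mu$ with the same box dimension $\beta$ and the same linear correction $l$ (Lemma 7.2 of \cite{simp_quadpaper}). Hence it has the same $\Delta=2d$, and Theorem \ref{thm_quadthm_configsec2} gives $r-\mu\leq 6d+36$. We also have $\mu\leq \min_i q_i\leq r/3$, so $r-\mu\geq 2r/3$. Combining,
\[
r\leq \tfrac{3}{2}(6d+36)=9d+54,
\]
which is exactly the form of Corollary 7.3 with $R^{non}(d)=6d+36$.

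\textbf{Main obstacle.} The delicate point is checking that the hypotheses of Theorem \ref{thm_quadthm_configsec2} hold for the configuration we produce. Concretely:
\begin{enumerate}
\item The associated non-overlapping configuration must be a genuine quadratic configuration to which the theorem applies. Its parts are reassigned types, and the theorem's proof only uses the square sizes and types.
\item $\Delta=2d$ must be even with $d\geq 2$. Evenness is automatic, since $\Delta$ is twice a dimension.
\item Degenerate ranks need handling. The reduction is valid because numerical MC-minimality requires $r>2$. If $r-\mu$ is very small, the bound holds trivially anyway.
\end{enumerate}
Non-emptiness is assumed, since it is needed to identify $\Delta$ with $2\dim$. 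Once these points are checked, the theorem follows.
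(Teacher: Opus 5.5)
Your proposal is correct and follows the paper's own argument: Prop 2.5 of \cite{simp_quadpaper} identifies $\Delta$ with $2d$, Theorem \ref{thm_quadthm_configsec2} supplies $R^{non}(d)=6d+36$, and the overlapping-to-non-overlapping reduction (Corollary 7.3, via the associated configuration of rank $r-\mu\geq 2r/3$ with the same $\Delta$) gives $r\leq \frac{3}{2}(6d+36)=9d+54$. One small correction to your checklist: Theorem \ref{thm_quadthm_configsec2} requires $d$ itself to be even (so $4\mid\Delta$), not just $\Delta$ even. This still holds, because a non-empty $\mathscr{X}^{\mathrm{irred}}$ is symplectic and hence even-dimensional, and in any case that proof uses the parity only to get $\Delta\geq 4$, i.e.\ to restrict to $r\geq 48$.
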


Note that we have not dealt with any issues of non-emptiness. One would need to show that for a given configuration, there is a choice of eigenvalues such that the associated quadratic character variety is nonempty for the character variety to achieve the suggested dimension.

\subsection{Middle Convolution}\label{subsec_middleconv}

For the proofs of the results in this paper, we will not explicitly need the description of middle convolution. However, the geometric objects in the background are character varieties with middle convolution applied to lower the rank. Middle convolution was defined by Katz to study rigid local systems in \cite{katz_rigid}. It was given an algebraic interpretation by Dettweiler and Reiter in \cite{dett_reiter_alg_Katz_inverGalois}. We follow Section 3 of \cite{simp_quadpaper} and Sections 3-4 of \cite{simpson_katzmiddleconv}. For the most part, we use the same notation of \cite{simp_quadpaper} outside of small changes for clarity.

Let $C=\mathbb{P}^1-\{t_1,...,t_k\}, \overline{C}=\mathbb{P}^1$ and let $D=t_1+...+t_k$. Consider $\overline{C}\times \overline{C}$ blown up at the diagonal points $(t_i,t_i)$ and call it $\overline{X}$. Let $\Delta\subset \overline{X}$ be the divisor 
$$\Delta=T+(H_1+...+H_k)+(V_1+...+V_k)+(U_1+...+U_k)$$
where 
\begin{itemize}
    \item $T$ is the strict transform of the diagonal in $\overline{C}\times\overline{C}$.
    \item $H=H_1+...+H_k$ is the strict transform of $D\times \overline{C}$ thought of as the horizontal divisors.
    \item $V=V_1+...+V_k$ is the strict transform of $\overline{C}\times D$ thought of as the vertical divisors.
    \item $U=U_1+...+U_k$ is the exceptional divisor.
\end{itemize}

Let $p,q:\overline{X}\rightarrow \overline{C}$ be the first and second projections, $j:X=\overline{X}\backslash \Delta\hookrightarrow \overline{X}$, and $p^\circ=p|_X, q^{\circ}=q|_X$ and $q'=q|_{\overline{C}\times C}$. 

\begin{definition}\label{def_middleconv}
    Let $L$ be a local system on $X$. Let $\mathcal{F}$ be a local system on $C$, then middle convolution with convolutor (or convolution kernel) $L$ is defined as
    $$MC(\mathcal{F}, L)\coloneqq R^1q_*'(j_{*}(p^{\circ,*}\mathcal{F}\otimes L)).$$
\end{definition}

We call a convolutor $L$ $MC$-invertible if there is another local system $L'$ such that $MC(MC(\mathcal{F}, L), L')\cong \mathcal{F}$. This is true for rank $1$ convolution kernels. When the convolution kernel is $MC$-invertible, middle convolution extends to an isomorphism between two character varieties on $\mathbb{P}^1-\{t_1,...,t_k\}$ changing the monodromy, i.e., the conjugacy classes $C_i$. We can understand the change explicitly when our convolution kernel is rank $1$. 

\subsubsection*{Rank 1 middle convolution}

Consider a conjugacy class in $G$ where $G=GL(r), O(r),$ or $Sp(r)$. It is determined by its conjugacy class in $GL(r)$, which in turn is determined by a partition of $r=\lambda_1+...+\lambda_k$ and $a_1, ...,a_k\in \mathbb{C}^*$. This is given as follows. The $a_i$ are the eigenvalues (note that we allow $a_i=a_j$ for $i\neq j$). For each $a_i$, let $\lambda_0(i),...,\lambda_{k(i)}(i)$ be the collection of parts of $\lambda$ associated to $a$, meaning that $\lambda_j(i)$ is the number of Jordan blocks that are size $\geq j$. This is the dual partition of the Jordan block partition. The collection of $\lambda_0(i),...,\lambda_{k(i)}(i)$ for $i=1,...,k$ form the partition $\lambda_1+...+\lambda_k$. 

Consider a rank $1$ convolution kernel $\mathcal{L}$ on $X$ and denote by $\beta:\pi_1(X,\mathbb{Z})\rightarrow \mathbb{C}^*$ its monodromy representation. It is determined by its monodromy around $H_i,V_i,U_i,T$ respectively for all $i$ which we denote $\beta^{H_i},\beta^{V_i}, \beta^{U_i}, \beta^T\in \mathbb{C}^*$ respectively. These are subject to the relations
$$\beta^{H_1}...\beta^{H_k}\beta^T=1, \beta^{V_1}...\beta^{V_k}\beta^T=1, \beta^{U_i}=\beta^{H_i}\beta^{V_i}\beta^T.$$
The last relation says that $\beta^{U_i}$ is determined by the other data. 

Now let $\alpha_i\coloneqq (\beta^{H_i})^{-1}$ be called the distinguished eigenvalue at $t_i$. Say that $j$ is distinguished at $t_i$ if the associated eigenvalue of $\lambda_j(i)$ is $\alpha_i$ and $\lambda_j(i)$ is the largest part of this eigenvalue such that $j$ is minimal with respect to all parts of the largest size. Clearly, this is unique when $\alpha_i$ is an eigenvalue at $i$. Let
$$\delta_i\coloneqq \begin{cases}
    \lambda_j(i) & \text{if }j\text{ distinguished at }i\\
    0 & \text{if }\alpha_i \text{ is not an eigenvalue at }t_i
\end{cases}$$
and define
$$\delta(\mathcal{F},\beta)\coloneqq r-\sum_{i}\delta_i.$$
Define a new partition of rank $r'=r+\delta$ as follows,
$$\lambda_j(i)'\coloneqq \begin{cases}
    \lambda_j(i) & \text{if }j\text{ is not distinguished at }i\\
    \lambda_j(i)+\delta_i & \text{if }j\text{ is distinguished at }i.
\end{cases}$$
The eigenvalue associated to the distinguished part at $t_i$ is $\beta^{V_i}$. For any non-distinguished part $j$ at $t_i$, the new associated eigenvalue is 
$$a_j(i)'=\beta^{H_i}\beta^{V_i}\beta^Ta_j(i).$$
Whenever $\beta^T\neq 1$ (this is required for middle convolution to be defined), this describes the change in the conjugacy classes.

\begin{prin}(Principle 3.4 of \cite{simp_quadpaper}, \cite{katz_rigid})
    Suppose $a_i$ are distinguished eigenvalues for an irreducible $r\geq 2$ local system $\mathcal{F}$ of multiplicities $\lambda_i$ at points $t_i$. Then $\delta=(k-2)r-\lambda_1-...-\lambda_k<0$ and for any rank $1$ middle convolution with $\beta^{H_i}=a_i^{-1}, \beta^T\neq 1$ and middle convolution is defined.
\end{prin}

\subsubsection*{Quadratic Case of middle convolution}

The above gives the description of middle convolution for the general linear group. If we want middle convolution for the quadratic case, we need to put restrictions on what kinds of convoluters are used in order to stay within quadratic local systems. A natural choice is to take a quadratic local system as our convolutor. 

\begin{prop}(Prop 3.2 in \cite{simp_quadpaper})
    The dual local systems are related by the formula
    $$MC(\mathcal{F}, \mathcal{L})^{\vee}\cong MC(\mathcal
    F^{\vee}, L^{\vee}).$$
    In particular if $\mathcal{F}$ and $L$ are quadratic then so is $MC(\mathcal{F}, \mathcal{L})$. \qed
\end{prop}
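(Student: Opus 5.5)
The plan is to derive the isomorphism from fiberwise Poincaré–Verdier duality for intermediate extensions on curves, and then to read off the quadratic statement by transporting the forms on $\mathcal{F}$ and $L$ through this duality.

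\textbf{Fiberwise description.} Fix $y\in C$. The fiber of $q$ over $y$ is $\overline{C}\cong\mathbb{P}^1$, because the blow-ups sit only over the points $t_i\notin C$. The divisor $\Delta$ meets this fiber in the points $t_1,\dots,t_k$ (from $H$) and the point $y$ (from $T$). Put $V=p^{\circ,*}\mathcal{F}\otimes L$.
\begin{enumerate}
\item Over $\overline{C}\times C$, the divisor $\Delta$ is a normal crossings divisor, and the fibers of $q'$ meet it transversally.
\item Hence $j_*V$ is locally a product near $\Delta$, and its formation commutes with restriction to fibers: $(j_*V)|_{q^{-1}(y)}=j_{y,*}(V|_{X_y})$, where $j_y\colon X_y=\mathbb{P}^1\setminus\{t_1,\dots,t_k,y\}\hookrightarrow\mathbb{P}^1$.
\item Since $q'$ is proper and topologically locally trivial along this stratified family, proper base change gives $R^1q'_*(j_*V)_y=H^1(\mathbb{P}^1,j_{y,*}V_y)$. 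These stalks form a local system, as is implicit in the definition of $MC$.
\end{enumerate}

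\textbf{The duality pairing.} On a smooth curve, $j_{y,*}$ of a local system shifted by $1$ is the intermediate extension $j_{!*}$. Intermediate extension commutes with Verdier duality, and $\mathbb{D}(V_y[1])=V_y^{\vee}[1]$. Poincaré–Verdier duality on the compact curve $\mathbb{P}^1$ therefore gives a perfect cup product pairing
\[
H^1(\mathbb{P}^1,j_{y,*}V_y)\otimes H^1(\mathbb{P}^1,j_{y,*}V_y^{\vee})\longrightarrow H^2(\mathbb{P}^1,\mathbb{C})\cong\mathbb{C}.
\]
This pairing is defined globally as a map of sheaves $R^1q'_*(j_*V)\otimes R^1q'_*(j_*V^{\vee})\to R^2q'_*\mathbb{C}=\mathbb{C}_C$. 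It is obtained by composing cup product with $j_*V\otimes j_*V^{\vee}\to j_*\mathbb{C}=\mathbb{C}$. Being fiberwise perfect, it identifies $R^1q'_*(j_*V^{\vee})$ with the dual local system.

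Finally, $V^{\vee}=p^{\circ,*}\mathcal{F}^{\vee}\otimes L^{\vee}$. This yields $MC(\mathcal{F},L)^{\vee}\cong MC(\mathcal{F}^{\vee},L^{\vee})$.

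\textbf{Quadratic case.} Suppose $\phi\colon\mathcal{F}\xrightarrow{\sim}\mathcal{F}^{\vee}$ and $\psi\colon L\xrightarrow{\sim}L^{\vee}$ are nondegenerate forms of signs $\varepsilon_{\mathcal{F}}$ and $\varepsilon_L$. Functoriality of $MC$ then gives
\[
MC(\mathcal{F},L)\xrightarrow{\sim}MC(\mathcal{F}^{\vee},L^{\vee})\cong MC(\mathcal{F},L)^{\vee},
\]
that is, a nondegenerate bilinear form on $MC(\mathcal{F},L)$. The form is symmetric or antisymmetric, and its sign is fixed by two facts. First, the form on $V$ has sign $\varepsilon_{\mathcal{F}}\varepsilon_L$. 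Second, cup product on $H^1$ is graded-commutative and so contributes a factor $-1$. Hence the induced form is $(-\varepsilon_{\mathcal{F}}\varepsilon_L)$-symmetric, and $MC(\mathcal{F},L)$ is quadratic.

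\textbf{Main obstacle.} I expect the main care to be needed in the base change step: showing that $j_*$ restricts to the fiberwise $j_{y,*}$, so that the stalks of $R^1q'_*$ really are the fiberwise intersection cohomology groups. The fiberwise duality and the sign bookkeeping are then standard.
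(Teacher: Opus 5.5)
The paper gives no proof of its own here; it quotes Simpson's Prop 3.2. Your argument is the standard Poincaré–Verdier duality proof and it is correct. Over $C$ the divisor $\Delta$ is just the disjoint union of the sections $\{t_i\}\times C$ and the diagonal, both transverse to the fibres, so the base-change step you flag is harmless and the stalk at $y$ is the parabolic cohomology $H^1(\mathbb{P}^1,j_{y,*}V_y)$. Your sign count, giving type $-\varepsilon_{\mathcal{F}}\varepsilon_L$, agrees with the classical fact that $MC_{-1}$ interchanges orthogonal and symplectic local systems.

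The one step worth an extra line is perfectness of the explicit cup-product pairing, which your symmetry computation relies on; an abstract isomorphism $\mathbb{D}j_{!*}\cong j_{!*}\mathbb{D}$ alone does not give this. It follows from $H^1(\mathbb{P}^1,j_{y,*}V_y)=\operatorname{im}\bigl(H^1_c(X_y,V_y)\to H^1(X_y,V_y)\bigr)$ together with Poincaré duality on $X_y$.
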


However, it is not sufficient in the quadratic case to use a rank $1$ local system to get our ``fitting in the square" property, Prop \ref{prop_quadfitsquare}. Instead, Simpson (Prop 3.6 in \cite{simp_quadpaper}) shows that when $k=3$, we can use a middle convolution with a rank $2$ quadratic convolutor, which is a composition of two rank $1$ middle convolutions with not necessarily quadratic convolutors. This allows for similar, but more involved, reductions (see Lemmas 4.1-4.5 of \cite{simp_quadpaper}) in $\delta$ to get the quadratic version of the "fitting in the square" property (Prop \ref{prop_quadfitsquare}).

\section{General Linear Bound}\label{sec_3}

Recall from Subsection \ref{subsec_GLsetup} that we consider configurations:
\begin{itemize}
    \item $r=q_1+q_2+q_3$ partition with $q_1\geq q_2\geq q_3>0$ (split the $r\times r$ square into three columns of width $q_i$)
    \item $r=\lambda_i^0+...+\lambda_{i}^{s_i}$ partitions of $r$ with $\lambda_i^j\leq q_i$ and $\lambda_i^0\geq \lambda_i^2\geq...\geq \lambda_i^{s_i}$ for $i=1,2,3$
    \item the (box) dimension is the number of empty boxes $\Delta=r^2-\sum_{i=1}^3\sum_{j=0}^{s_i}(\lambda_i^j)^2$.
\end{itemize}

We refer to 
$$\Delta_i=rq_i-\sum_{j=0}^{s_i}(\lambda_i^j)^2$$
as the column dimension. It is the number of empty boxes in the $i$th column. Note that $\Delta=\Delta_1+\Delta_2+\Delta_3$ with each $\Delta_i\geq 0$. 

\subsection{Box dimension lemmas}

We first aim to figure out the smallest dimension of a single column. Let $\mu$ and $\lambda$ be two distinct partitions of $r$. Recall that $\lambda>\mu$ in the dominance ordering if
$$\lambda_1+...+\lambda_k\geq \mu_1+...+\mu_k$$
for all $k\geq 1$ and $\lambda\neq \mu$. This defines a partial order on the set of partitions of $r$. Given $\lambda>\mu$, one can get from $\lambda$ to $\mu$ by applying a sequence of two possible moves to the Young diagram. We can either move a block down to a row of a strictly small size (first type) or take a block and make a new row (second type).
\ytableausetup{boxsize=1.25em} 
\begin{example}
    The Young diagram of the partition $(5,3,2,1)$ is 
    \begin{center}
    \begin{ydiagram}{5, 3,2,1}
    \end{ydiagram}
    \end{center}
    The list of the possible Young diagrams achieved by moving a block from the first row is $(4,4,2,1),(4,3,3,1),(4,3,2,2),$ and $(4,3,2,1,1)$.
    \begin{center}
        \begin{ydiagram}{4,4,2,1}
        \end{ydiagram}
        \hspace{1cm}
        \begin{ydiagram}
            {4,3,3,1}
        \end{ydiagram}
        \hspace{1cm}
        \begin{ydiagram}
            {4,3,2,2}
        \end{ydiagram}
        \hspace{1cm}
        \begin{ydiagram}
            {4,3,2,1,1}
        \end{ydiagram}
    \end{center}
\end{example}

In general, these moves change the partition $\lambda=(\lambda_0,...,\lambda_s)$ as
\begin{enumerate}
    \item (first type) $(\lambda_0,...,\lambda_{j-1},\lambda_{j}-1, \lambda_{j+1},...,\lambda_{k-1},\lambda_{k}+1,\lambda_{k+1},...,\lambda_s)$,
    \item (second type) $(\lambda_0,...,\lambda_{j-1},\lambda_{j}-1,\lambda_{j+1},...,\lambda_s,1)$.
\end{enumerate}
We will heavily use the next proposition in both the general linear case and the quadratic case (for the box dimension).

\begin{prop}\label{prop_mincoldim}
    Let $q<r$ be two natural numbers. If $\lambda>\mu$ in the dominance order of partitions of $r$ with all parts $\leq q$, then the column (box) dimension using $\mu$ is strictly greater than that of $\lambda$. In particular, the unique partition of $r$ that achieves the smallest (box) dimension for a column of size $q$ is $(q,...,q,k)$ with $k\equiv r\mod q$ and $k\in \{0,...,q-1\}$ and the dimension is given by $k(q-k)$.
\end{prop}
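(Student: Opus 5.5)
The column dimension is $\Delta_{col}(\lambda)=rq-\sum_j\lambda_j^2$. Since $rq$ is fixed, the claim is equivalent to the following: among partitions of $r$ with all parts $\le q$, the sum of squares $\sum_j\lambda_j^2$ strictly decreases as we go down the dominance order. I would prove this using the two elementary moves described just before the statement. The approach is to reduce to single moves.

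\textbf{Step 1: reduce to single moves.} Suppose $\lambda>\mu$ and both partitions have parts $\le q$. I want a chain of first and second type moves from $\lambda$ to $\mu$ in which every intermediate partition also has parts $\le q$. I would use the standard fact that a dominance cover relation is realized by moving one box from a row to a lower row, together with the treatment of appending a new row of length $1$ as moving a box to an empty row (pad with zeros). Every intermediate partition on a chain of cover relations lies between $\mu$ and $\lambda$. Its largest part is therefore at most $\lambda_0\le q$, since the first partial sum is bounded by that of $\lambda$. So the constraint is automatically preserved. The one mildly delicate point is justifying that a chain of box moves exists; if I want to avoid citing Brylawski's cover theorem, I would argue directly. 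Take the first index $j$ where $\lambda_j\ne\mu_j$ (so $\lambda_j>\mu_j$), move a box from the last row of $\lambda$ of length $\lambda_j$ down to the first later row $k$ where the running sum of $\lambda$ equals that of $\mu$ again, and check that the result is a partition still dominating $\mu$. The number of boxes that differ decreases, so induction finishes.

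\textbf{Step 2: a single move strictly decreases the sum of squares.} For a first type move, $a=\lambda_j>\lambda_k=b$, and the new partition has $a-1\ge b+1$, so $a\ge b+2$. The change in the sum of squares is
\[(a-1)^2+(b+1)^2-a^2-b^2=-2(a-b)+2\le -2<0.\]
The second type move is the case $b=0$ with $a\ge 2$. Moving the last box of a row of size $1$ is not a genuine move, because it leaves the same partition. Hence the box dimension strictly increases along each move, which proves the first claim.

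\textbf{Step 3: the minimizer.} Write $r=nq+k$ with $0\le k<q$. The partition $(q,\dots,q,k)$, with $n$ parts equal to $q$, dominates every partition of $r$ with parts $\le q$. Indeed, its $t$-th partial sum is $\min(tq,r)$, which is the largest possible for any such partition. By Step 2 it is the unique minimizer. Its dimension is
\[rq-nq^2-k^2=q(nq+k)-nq^2-k^2=k(q-k).\]

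I expect the main obstacle to be only the bookkeeping in Step 1, namely producing the move chain while staying within the dominance interval. Steps 2 and 3 are one-line computations.
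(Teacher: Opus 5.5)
Your proof is correct and follows the paper's argument. Like the paper, you reduce to single first- and second-type moves and show each move changes $\sum\lambda_j^2$ by $-2(\lambda_j-\lambda_k)+2\le -2$, treating the second type as the case $\lambda_k=0$; you then observe that $(q,\dots,q,k)$ is the maximum of the dominance order on partitions with parts $\le q$.

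Your Step 1 gives more than the paper, which takes the existence of a move chain for granted. Two small remarks on it: the box should go to the first row of length $\lambda_k$ (or you should re-sort afterwards), and checking that parts stay $\le q$ is unnecessary, since the sum-of-squares inequality holds for all partitions.
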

\begin{proof}
    Let $\lambda=(\lambda_1,...,\lambda_s)$ and $\mu=(\mu_1,...,\mu_t)$ be partitions of $r$ with all parts less than or equal to $q$. Let $\beta_{\lambda}$, resp. $\beta_{\mu}$, denote the column (box) dimension using $\lambda$, resp. $\mu$. Showing
    $$\beta_{\lambda}=r^2-\lambda_1^2-...-\lambda_s^2>r^2-\mu^{2}_1-...-\mu^2_t=\beta_{\mu}$$
    is equivalent to
    \begin{equation}\label{eq_sumsq}\lambda_1^2+...+\lambda_s^2>\mu_1^2+...+\mu_t^2.\end{equation}
    It suffices to prove Equation \ref{eq_sumsq} when $\mu$ is a partition achieved by applying one of the two moves to $\lambda$. Then it is sufficient to show
    \begin{itemize}
        \item $\lambda^2_1+...+\lambda_s^2>\lambda_1^2+...+(\lambda_j-1)^2+...+(\lambda_k+1)^2+...+\lambda_s^2$
        \item and $\lambda_1^2+...+\lambda_s^2>\lambda_1^2+...+(\lambda_j-1)^2+...+\lambda_s^2+1$.
    \end{itemize}
    The first inequality is equivalent to
    $$2\lambda_j-2\lambda_k-2>0.$$
    In order to do the first move, one must have $\lambda_j-\lambda_k\geq 2$. Hence, the above is indeed satisfied. The second move can be seen as an instance of the first, where we allow $\lambda^k=\lambda^{s+1}=0$. In that case, we find $2\lambda_j-2>0$, which holds since $\lambda_j\geq 2$ in order to apply the second move. This competes the proof that $\beta_{\lambda}>\beta_{\mu}$.

    The partition $(q,...,q,k)$ is the biggest partition with all parts $\leq q$ and hence has the smallest (box) dimension.
\end{proof}

Now, given a partition $q_1+q_2+q_3=r$, we can say what the minimal dimension is.

\begin{prop}\label{prop_mincolconfig}
    Let $q_1+q_2+q_3=r$ be a partition of $r$. Then the smallest nonzero dimension with column widths $q_i$ is 
    $$k_1(q_1-k_1)+k_2(q_2-k_2)+k_3(q_3-k_3)$$
    where $k_i\equiv r\mod q$, $k_i\in \{0,...,q_i-1\}$ unless all $k_i=0$ in which case the minimal nonzero dimension is given by $2q_i-2$ for the smallest $q_i$ that is not $1$.
\end{prop}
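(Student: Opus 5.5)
Since $\Delta=\Delta_1+\Delta_2+\Delta_3$ and the three column partitions can be chosen independently once $(q_1,q_2,q_3)$ is fixed, I will minimize each column separately and then combine the answers. The plan has three steps: the generic case, the second-smallest value of a single column, and the combination in the divisible case.

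\textbf{Generic case.} By Prop \ref{prop_mincoldim}, the minimal value of $\Delta_i$ is $k_i(q_i-k_i)$. It is attained uniquely by the partition $(q_i,\dots,q_i,k_i)$. The quantity $k_i(q_i-k_i)$ is strictly positive exactly when $k_i\neq 0$. So if some $k_i\neq 0$, the configuration made of the three minimal columns already has nonzero dimension $\sum_i k_i(q_i-k_i)$. Any other configuration has each $\Delta_i$ at least as large, so this sum is the smallest nonzero dimension.

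\textbf{Second-smallest column dimension.} Now suppose all $k_i=0$, so the minimal configuration has $\Delta=0$. To get a nonzero dimension, at least one column must use a non-maximal partition. The key step is to find the second-smallest column dimension for a width $q$ with $q\mid r$, say $r=nq$. I claim that among partitions of $r$ with parts $\le q$, other than $(q^n)$, the partition $(q^{n-1},q-1,1)$ dominates all the others.

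To check this, take any such $\mu\neq(q^n)$. Its partial sums satisfy $\mu_1+\dots+\mu_j\le jq$ for $j\le n-1$, which matches the partial sums of $(q^{n-1},q-1,1)$ in that range. At $j=n$, we have $\mu_1+\dots+\mu_n\le r-1$. Indeed, if the first $n$ parts summed to $nq$, each would equal $q$ and $\mu$ would be $(q^n)$. For $j>n$, both partial sums equal $r$. This requires $q\ge 2$; when $q=1$ the only partition is $(1^r)$ and its column dimension is $0$.

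By Prop \ref{prop_mincoldim} (the dominance ordering is strictly monotone for the dimension), every non-maximal partition therefore has column dimension at least that of $(q^{n-1},q-1,1)$. That dimension is
\[
rq-(n-1)q^2-(q-1)^2-1=q^2-(q-1)^2-1=2q-2.
\]

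\textbf{Combination in the divisible case.} Keep two columns at their minimal partitions, contributing $0$, and perturb one column with $q_i\ge 2$; this gives nonzero dimension $2q_i-2$. Any nonzero configuration must have some column non-maximal, and that column must have $q_i\ge2$. Hence its dimension is at least $2q_i-2$ for that $i$. Since $2q-2$ is increasing in $q$, the minimum is attained at the smallest $q_i\neq 1$.

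The main subtlety I expect is the dominance claim in the second step. There is also a degenerate edge case, $q_1=q_2=q_3=1$ ($r=3$), where no nonzero dimension exists. I would note this as excluded implicitly by the statement.
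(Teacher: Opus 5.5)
Your proof is correct and follows the same route as the paper. When some $k_i\neq 0$ you apply Prop \ref{prop_mincoldim} column by column, and when all $k_i=0$ you use that $(q,\dots,q,q-1,1)$ is the next partition in dominance order, with column dimension $2q-2$. You also spell out the partial-sum check behind the dominance claim and flag the degenerate case $q_1=q_2=q_3=1$, both of which the paper's terse proof leaves implicit.
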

\begin{proof}
    If at least one of the $k_i$ is nonzero, then by Prop \ref{prop_mincoldim}, the smallest box dimension at each of the columns will work. If all $k_i=0$, then the smallest dimension is zero. Let $q_i$ be the smallest of the $q_i$ that are not $1$. Then the next biggest partition in the dominance ordering will be $(q_i,...,q_i,q_i-1,1)$.
\end{proof}

\subsection{General Linear Bound}

We have reduced finding the minimal dimension to finding the optimal partition $q_1+q_2+q_3$ of $r$. Now we prove the theorem via casework. When $\Delta=0$, the dimension of the associated character variety is $2$. 
In order for $\Delta$ to be zero, we need each $q_i$ to divide $r$, i.e $q_1=\frac{r}{m_1},q_2=\frac{r}{m_2},$ and $q_3=\frac{r}{m_3}$. Hence, we just need to solve 
$$\frac{1}{m_1}+\frac{1}{m_2}+\frac{1}{m_3}=1.$$
It follows that the dimension of the character variety is $2$ if and only if $(q_1,q_2,q_3)$ is one of $\left(\frac{r}{3},\frac{r}{3},\frac{r}{3}\right)$, $\left(\frac{r}{2},\frac{r}{4},\frac{r}{4}\right)$, and $\left(\frac{r}{2},\frac{r}{3},\frac{r}{6}\right)$ (see also, Lemma 2.13 of \cite{simpson_katzmiddleconv}). Hence, dimension $2$ MC-minimal character varieties can occur for arbitrarily high rank. Now we consider character varieties of bigger dimensions on three points and find the bound. The proof of the following can be thought of as a mini-version of our proof in the quadratic case, Theorem \ref{thm_quadthm_charvar}. For each $i$, write $r=n_iq_i+c_i$ where $\frac{-q_i}{2}< c_i\leq \frac{q_i}{2}$. By Proposition \ref{prop_mincoldim},
$$\Delta\geq \Delta_i\geq |c_i|(q_i-|c_i|).$$
Note that $q_i=\frac{r}{n_i}-\frac{c_i}{n_i}$. Think of $c_i$ as a measure of how close $q_i$ is to dividing $r$. 
\begin{theorem}\label{thm_GLrbound} (Theorem \ref{thm_GLrboundsec2})
    Given $\Delta>0$ even, any minimal configuration of dimension $\Delta$ has rank $r\leq 3\Delta+6$ and $r=3\Delta+6$ is achieved for exactly the configuration with $(q_1,q_2,q_3)=\left(\frac{r}{2}, \frac{r}{3}, \frac{r}{6}\right)$ and columns $\left(\frac{r}{2},\frac{r}{2}\right),\left(\frac{r}{3},\frac{r}{3},\frac{r}{3}\right),$ and $\left(\frac{r}{6},\frac{r}{6},\frac{r}{6},\frac{r}{6},\frac{r}{6},\frac{r}{6}-1,1\right)$ (see Figure \ref{thm_GLrboundsec2}).
\end{theorem}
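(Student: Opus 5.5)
The plan is to argue by contradiction: assume $\Delta>0$ and $r\geq 3\Delta+6$, and show that the only possibility is the stated configuration with equality. The basic tool is the per-column lower bound $\Delta\geq\Delta_i\geq |c_i|(q_i-|c_i|)$ from Prop \ref{prop_mincoldim}, where $r=n_iq_i+c_i$. Whenever $c_i\neq 0$ this gives $\Delta\geq q_i-1$, and if moreover $|c_i|\geq 2$ it gives roughly $\Delta\geq q_i$. When $c_i=0$, either $\Delta_i=0$, which forces the column $(q_i,\dots,q_i)$, or, by the dominance-order argument in the proof of Prop \ref{prop_mincolconfig}, $\Delta_i\geq 2q_i-2$; a column with $q_i=1$ always has $\Delta_i=0$. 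So every column is either ``exact'' (it divides $r$ and is filled minimally) or ``small'', meaning $q_i\leq\Delta+1$, or $q_i\leq \Delta/2+1$ in the divisible-but-nonminimal case.

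\textbf{Case of at least two small columns.} I split on how many columns are exact. Suppose at least two columns are small. Then the remaining column has $q\geq r-2(\Delta+1)>r/2$ once $r\geq 3\Delta+6$. Hence that column has $n=1$ and $c=r-q$, with $1\leq c\leq q/2$. Its column bound gives $\Delta\geq c(q-c)\geq q-1>r/2-1$, which contradicts $r\geq 3\Delta+6$. The subcase where every column divides $r$ but $\Delta>0$ reduces to the same kind of estimate, because the nonminimal column contributes at least $2q_i-2$.

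\textbf{Case of exactly one small column.} Now suppose exactly one column, say $q_j$, is small and the other two are exact. Then $q_a=r/m_a$ and $q_b=r/m_b$ for integers $m_a,m_b$, and $q_j=r\,(1-1/m_a-1/m_b)=r\,a/b$ with $a/b>0$ in lowest terms.

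\emph{Subcase $a/b=1/n$.} Then $q_j$ divides $r$, so $c_j=0$, and the column must be nonminimal. This gives $\Delta\geq 2q_j-2=2r/n-2$. Enumerating $1/m_a+1/m_b=1-1/n$ gives $n\in\{2,3,4,6\}$, with $n=6$ exactly when $\{m_a,m_b\}=\{2,3\}$. So $r\leq \tfrac{n}{2}(\Delta+2)\leq 3\Delta+6$, and equality forces $n=6$ and $\Delta_j=2q_j-2$. That in turn forces the column $(r/6,\dots,r/6,r/6-1,1)$, which is the next partition down in the dominance order, and this is the claimed extremal configuration.

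\emph{Subcase $a\geq 2$.} Then $c_j=r-n_jq_j$ is a nonzero multiple of $r/b$, so $|c_j|\geq r/b$. Also $q_j-|c_j|\geq q_j/2=ra/(2b)$. This makes $\Delta\geq |c_j|(q_j-|c_j|)$ quadratic in $r$, namely roughly $r^2/b^2$, so $r$ is tiny compared with $3\Delta+6$. The remaining possibility is $b$ large, but then $1/m_a+1/m_b$ has a large denominator while $a/b\geq 2/b$ must still satisfy $q_j\leq\Delta+1$. I will check directly that $q_j=ra/b\geq r/6$ still holds, for example when $m_a=2$ and $m_b\geq 7$, which gives $a/b\geq 5/14$, and then apply the bound above.

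\textbf{Main obstacle.} I expect the hardest step to be this last one: bounding $1-1/m_a-1/m_b$ away from zero when it is not a unit fraction, uniformly enough that the quadratic estimate beats $3\Delta+6$, and making sure no small-$r$ exception slips through. I will handle it by enumerating small $(m_a,m_b)$ and using $a/b\geq 1/6$ together with $|c_j|\geq r/b$ for the rest. I also need to check the uniqueness claim, i.e.\ that none of the non-equality cases reaches $r=3\Delta+6$.
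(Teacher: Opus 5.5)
\textbf{Gap 1: the case of at least two small columns.} Your exact/small dichotomy is a different route from the paper's, and the unit-fraction subcase, including the equality characterization, is correct. But this case contains an arithmetic error. From $r\ge 3\Delta+6$ you get $\Delta+1\le r/3-1$. So the remaining column only satisfies $q\ge r-2(\Delta+1)\ge r/3+2$, not $q>r/2$. For instance, $r=60$, $\Delta=18$ allows two small columns of width $19$, leaving $22<30$.

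In particular that column can be exact with $q=r/2$. Then the per-column estimate $\Delta\ge\Delta_i$ you rely on is too weak: the larger small column may have width only about $r/4$, giving $\Delta\ge r/4-1$, which does not beat $r/3-2$. The repair is additivity, $\Delta=\Delta_1+\Delta_2+\Delta_3$ with all $\Delta_i\ge 0$. Two small columns $a,b$ give $\Delta\ge q_a+q_b-2=r-q_c-2$. Either column $c$ is exact, so $q_c=r/m\le r/2$ and $\Delta\ge r/2-2$, or $c$ is also small and $\Delta\ge r-3$. Both strictly exceed $r/3-2$.

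\textbf{Gap 2: the subcase $a\ge 2$.} This subcase is left open, and the mechanism you propose does not close it. The bound $|c_j|\ge r/b$ only uses $b\mid r$. When $b$ is comparable to $r$, the estimate $ar^2/(2b^2)$ is a constant, and knowing $q_j\ge r/6$ does not bound $b$.

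The missing observation is that smallness together with $r\ge 3\Delta+6$ gives $q_j\le \Delta+1\le r/3-1$. Hence $q_a+q_b\ge 2r/3+1$, i.e.\ $1/m_a+1/m_b>2/3$. With $2\le m_a\le m_b$ this leaves only $(2,3)$ and $(2,4)$, which are unit-fraction cases you already treated, and $(2,5)$. In the last case $q_j=3r/10$, $n_j=3$, $c_j=r/10$, and Prop~\ref{prop_mincoldim} gives $\Delta\ge (r/10)(r/5)=r^2/50$. This exceeds $r/3-2$ for every $r$, since $3r^2-50r+300$ has negative discriminant.

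\textbf{With both repairs.} Your argument, exact versus small columns plus the classification of $1/m_a+1/m_b+1/n=1$, is then complete. It gives strict inequality outside the $(2,3,6)$ case. It is a cleaner alternative to the paper's proof, which orders $q_1\ge q_2\ge q_3$ and pins down $(n_1,c_1)$ and then $(n_2,c_2)$ by casework on small $|c_i|$.
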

\begin{proof}
    We prove the theorem by showing that for any configuration, $\Delta\geq \frac{r}{3}-2$ and that equality is only achieved for the configuration given in the statement. Assume we are given a configuration with $\Delta\leq \frac{r}{3}-2$. 
    
    We first focus on the first column and show that it must have either $q_1=\frac{r}{2}$ or $q_1=\frac{r}{3}$ by first reducing $n_1$ to either $2$ or $3$, then showing $c_1=0$. Suppose $n_1=1$, then $r=q_1+c_1\leq \frac{3}{2}q_1$ with $c_1=q_2+q_3\geq 2$. Then
    $$\Delta\geq c_1(q_1-c_1)\geq c_1\frac{q_1}{2}.$$
    Since $r\geq \frac{3}{2}q_1$,
    $$\Delta \geq \frac{c_1 r}{3}\geq \frac{r}{3}>\frac{r}{3}-2.$$
    If $n_1\geq 4$, then
    $$r=n_1q_1+c_1\geq 4q_1-\frac{q_1}{2}=\frac{7q_1}{2}>3q_1$$
    which contradicts our order assumption of the $q_i$. Hence, $n_1$ must be either $2$ or $3$. Now we claim that $c_1=0$. 
    
    Suppose $n_1=2$, then $r= 2q_1+c_1\leq \frac{5}{2}q_1$ and $\Delta\geq |c_1|\frac{q_1}{2}\geq |c_1|\frac{r}{5}.$
    If $|c_1|\geq 2$, then this is greater than $\frac{r}{3}-2$. If $c_1=1$ or $c_1=-1$, then 
    $$\Delta\geq q_1-1\geq \frac{r}{2}-\frac{1}{2}-1>\frac{r}{3}-2.$$
    Thus if $n_1=2$, then $c_1=0$.

    Suppose $n_1=3$. By the order assumption $c_1\leq 0$. Then $r=3q_1+c_1\leq \frac{7}{2}q_1$ and $\Delta\geq |c_1|\frac{r}{7}.$
    If $c_1\leq -3$, we would be done. If $c_1=-2$, then $\Delta\geq 2(q_1-2)=\frac{2r}{3}+\frac{4}{3}-2$ and if $c_1=-1$, $\Delta\geq q_1-1=\frac{r}{3}-\frac{2}{3}.$ Either choice is bigger than $\frac{r}{3}-2$. 

    Now, we must have $q_1=\frac{r}{2}$ or $q_1=\frac{r}{3}$. However, if $q_1=\frac{r}{3}$, then $q_2=q_3=\frac{r}{3}$ by the order assumption. By Proposition \ref{prop_mincoldim}, $\Delta=\frac{2r}{3}-2$.  
    
    Hence, we can assume $q_1=\frac{r}{2}$. The order assumption implies that $q_2\geq \frac{r}{4}$. We now claim that $n_2\in \{3,4\}$. If $n_2=1$, then $r=q_2+c_2\leq \frac{3}{2}q_2$ which contradicts $q_2\leq \frac{r}{2}$. If $n_2=2$, the same computations as in the case of $q_1$ will imply that $c_2=0$, but this is not possible since $q_3\geq 1$. If $n_2\geq 5$, then $r=n_2q_2+c_2\geq 5q_2-\frac{q_2}{2}=\frac{9}{2}q_2$ which contradicts $q_2\geq \frac{r}{4}$. Now we have either $n_2=3$ or $n_2=4$.

    \textbf{Case:} $n_2=4$. In this case, $q_1=\frac{r}{2}, q_2=\frac{r}{4}-\frac{c_2}{4},$ and $q_3=\frac{r}{4}+\frac{c_2}{4}$. Our order assumption implies $c_2\leq 0$. Notice that $\Delta_2\geq |c_2|(q_2-|c_2|)\geq |c_2|\frac{q_2}{2}\geq |c_2|\frac{r}{9}.$
    If $|c_2|\geq 3$, this is greater than $\frac{r}{3}-2$. If $c_2=-2$, then
    $$\Delta_2=2\left(\frac{r}{4}+\frac{1}{2}-2\right)=\frac{r}{2}-3.$$
    We cannot have $c_2=-1$ since $r$ is even. Lastly, if $c_2=0$, then $q_1=\frac{r}{2}, q_2=q_3=\frac{r}{4}$ and Prop \ref{prop_mincoldim} gives that $\Delta=\frac{r}{2}-2$.

    \textbf{Case:} $n_2=3$. In this case, $q_1=\frac{r}{2}, q_2=\frac{r}{3}-\frac{c_2}{3},$ and $q_3=\frac{r}{6}+\frac{c_2}{3}$. Similarly to the $n_3=4$ case, we have that $\Delta_2\geq |c_2|\frac{r}{7}$
    so if $|c_2|\geq 3$, we are done. If $c_2=2$, then
    $$\Delta_2=\frac{2r}{3}-\frac{4}{3}-4$$
    which is greater than $\frac{r}{3}-2$ once $r\geq 10$. Note that it is only possible to have $c_2=2$ when $q_2\geq 4$, and so such configurations only show up for $r\geq 12$. If $c_2=-2$, then
    $$\Delta_2=\frac{2r}{3}+\frac{4}{3}-4$$
    which is greater than $\frac{r}{3}-12$ for $r\geq 2$. If $|c_2|=1$, then
    $$\Delta_2\geq \frac{r}{3}-\frac{1}{3}-1>\frac{r}{3}-2.$$
    Hence, we must have $c_2=0$. Then $q_1=\frac{r}{2}, q_2=\frac{r}{3},$ and $q_3=\frac{r}{6}$ with $\Delta=\frac{r}{3}-2$ as desired.
\end{proof}

Combining the above with the discussion in Section \ref{subsec_GLsetup} gives us Theorem \ref{thm_GLrbound_charvarversion}.

\section{Quadratic Bound Reductions}\label{sec_4}

We now work towards proving the quadratic bound, i.e., Theorem \ref{thm_quadthm_configsec2}. In this section, we do our major reductions, reducing the possible configurations with small dimension with respect to the rank to a finite family of cases, which we deal with in the next section. Recall from Section \ref{sec_2} that a (non-overlapping) numerically MC-minimal quadratic configuration of rank $r$ is given by
\begin{itemize}
    \item $r=q_1+q_2+q_3$ with $q_i\geq 1$ and $q_1\geq q_2\geq q_3$ (split $r\times r$ square into three columns of width $q_i$).
    \item $\sum_{j=0}^{s_i} \lambda_i^j=r$ with $1\leq \lambda_i^{j}\leq q_i$ for all $j$ and $i=1,2,3$.
    \item Each $\lambda_i^j$ is assigned type $e$, type $f$, or type $m$. The type $m$ parts appear in pairs. Then let $e_i^0\geq e_i^1\geq ...\geq e_i^{s_{i,e}}$, $f_i^0\geq f_i^1\geq...\geq f_i^{s_{i,f}}$, $m_i^0\geq m_i^1\geq...\geq m_i^{s_{i,m}}$ denote the ordered set of type $e$, type $f$, and type $m$ parts at column $i$.
    \item The box dimension is $\beta=r^2-\sum_{i=1}^3\sum_{j=0}^{s_i}(\lambda_i^j)^2.$
    \item The linear correction is $l=r-\sum_{i,j}(-1)^{j}e_i^j-\sum_{i,j}(-1)^{j}f_i^j$.
    \item The dimension of the configuration is $\Delta=\beta-\varepsilon l$ where $\varepsilon=\pm 1$.
\end{itemize}

The first observation we make is that we can replace any pair of type $m$ squares with a pair of type $e$ without changing the dimension.

\begin{lemma}\label{lem_typemtotypee}
    For any non-overlapping numerically MC-minimal quadratic configuration, there exists a non-overlapping MC-minimal quadratic configuration with the same dimension with no type $m$ squares.
\end{lemma}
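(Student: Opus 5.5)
The plan is to modify each column separately. In every column $i$, take each pair of type $m$ squares $m_i^j,m_i^j$ and relabel both squares as type $e$ squares of the same size $m_i^j$, leaving every other part, every size and the column widths $q_1,q_2,q_3$ unchanged. The result is again a non-overlapping configuration with no type $m$ squares. It then suffices to check two things: that it satisfies the defining data listed at the start of this section, and that $\beta$ and $l$ are unchanged, so that $\Delta=\beta-\varepsilon l$ is unchanged for either value of $\varepsilon$.

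The first check is immediate. The multiset of sizes $\lambda_i^j$ in each column is unchanged, so $\sum_j\lambda_i^j=r$, $1\le\lambda_i^j\le q_i$, and $q_1+q_2+q_3=r$ all still hold. In particular the squares still fit into the $q_i\times r$ rectangles, so the configuration is still non-overlapping in the sense of Prop \ref{prop_quadfitsquare}. The convention that type $e$ carries the largest half-unital block is also respected. Adding $e$-parts can only increase $e_i^0$, so if $e_i^0\ge f_i^0$ held before it holds afterwards. If the column had no $e$-parts before, then the new $e$-parts are the only ones of that type, and if they end up smaller than $f_i^0$ we simply swap the names $e$ and $f$ in that column. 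This swap leaves the formula for $l$ symmetric. The box dimension $\beta=r^2-\sum_{i,j}(\lambda_i^j)^2$ depends only on the sizes, so $\beta$ is unchanged.

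The main step is the linear correction
$$l=r-\sum_{i,j}(-1)^je_i^j-\sum_{i,j}(-1)^jf_i^j.$$
The $f$-terms do not change, so we need the alternating sum of the ordered $e$-parts in column $i$ to be unchanged when two parts of equal size $m$ are inserted. In the decreasing order we place the two new parts in consecutive positions $p,p+1$. We may do this even if some existing $e$-parts also have size $m$, because permuting equal values does not change an alternating sum of a sorted list. The two new parts then contribute $(-1)^pm+(-1)^{p+1}m=0$. Every earlier part keeps its index, and every later part has its index increased by $2$, which preserves its sign $(-1)^j$. So the alternating sum is unchanged, hence $l$ is unchanged. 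Inserting the pairs one at a time and inducting on the number of type $m$ pairs removes all of them.

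I expect the only real obstacle to be this bookkeeping with ties in the ordering and the $e$/$f$ labelling convention; the arithmetic itself is trivial.
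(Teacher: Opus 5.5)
Your proposal is correct and is essentially the paper's argument: relabel each type $m$ pair as two type $e$ parts placed consecutively in the decreasing order. Their contributions $+m$ and $-m$ cancel, and every later $e$-part's index shifts by $2$, so the alternating sum (hence $l$) and the box dimension $\beta$ are unchanged. Your extra bookkeeping (ties in the ordering, the $e$/$f$ labelling convention, and re-checking the configuration data) is a harmless refinement that the paper leaves implicit.
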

\begin{proof}
    Consider the dimension at column $i$ with a pair of type $m$ squares. The box dimension does not change based on the type, so we look at the linear correction. Let $e_i^0\geq e_i^1\geq ...\geq e_i^{s_i,e}$. Suppose we change the pair of $m$ squares of size $m_i^j$ into two type $e$ squares. Then we would have $e_i^0\geq e_i^1\geq ...\geq e_i^{k}\geq m_i^j\geq m_i^{j}\geq e_i^{k+1}\geq ...\geq e_i^{s_{i,e}}$. Then the contribution to the linear correction of the type $e$ squares is
    $$\sum_{j=0}^{k}(-1)^{j}e_i^j+m_i^j-m_i^j+\sum_{j=k+1}^{s_{i,e}}(-1)^{j+2}e_i^j=\sum_{j=0}^{s_{i,e}}(-1)^je_i^j.$$
    Thus, the linear correction would not change, and the dimension would be the same.
\end{proof}

From now on, we only consider configurations with squares of type $e$ or $f$. We refer to 
\begin{itemize}
    \item $\beta_i=rq_i-\sum_{j=0}^{s_i}(\lambda_i^j)^2$ as the $i$th column box dimension.
    \item $l_i=q_i-\sum_{j=0}^{s_{i,e}}(-1)^je_i^j-\sum_{j=0}^{s_{i,f}}(-1)^jf_i^j$ as the linear correction of the $i$th column.
    \item $\Delta_i=\beta_i-\varepsilon l_i$ as the $i$th column dimension of the configuration.
\end{itemize}

Clearly $\beta_1+\beta_2+\beta_3=\beta$, $l_1+l_2+l_3=l$ and $\Delta_1+\Delta_2+\Delta_3=\Delta$.

\subsection{Quadratic Column Dimension}\label{subsec_quadcoldim}

First, we need some control over the linear correction.
\begin{lemma}\label{lem_lincorrbound}(Lemma 5.2 in \cite{simp_quadpaper} for columns)
    Let $l_i$ be the linear correction at the $i$th column and $l$ be the linear correction. Then $-q_i\leq l_i\leq q_i$ and $-r\leq l\leq r$.
\end{lemma}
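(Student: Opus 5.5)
The plan is to reduce to a single alternating sum of a decreasing sequence and then sum over columns. Fix a column $i$ and, by Lemma \ref{lem_typemtotypee}, assume it contains only type $e$ and type $f$ squares. Write
$$S_e=\sum_{j=0}^{s_{i,e}}(-1)^je_i^j, \qquad S_f=\sum_{j=0}^{s_{i,f}}(-1)^jf_i^j,$$
so that $l_i=q_i-S_e-S_f$.

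The key elementary fact concerns any weakly decreasing sequence of positive reals $a_0\geq a_1\geq\dots\geq a_s$. Its alternating sum satisfies
$$0\leq a_0-a_1+a_2-\dots\leq a_0.$$
For the lower bound, group the terms in pairs $(a_0-a_1)+(a_2-a_3)+\dots$, with a possible final term $a_s\geq 0$; each pair is nonnegative. For the upper bound, write the sum as $a_0-(a_1-a_2)-(a_3-a_4)-\dots$, with a possible final term $-a_s\leq 0$; each subtracted group is nonnegative.

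Applying this to $S_e$ and $S_f$ gives $0\leq S_e\leq e_i^0\leq q_i$ and $0\leq S_f\leq f_i^0\leq q_i$, since every part in column $i$ is at most $q_i$. The upper bound on $l_i$ is then immediate: $l_i\leq q_i$. For the lower bound we need $S_e+S_f\leq 2q_i$, which follows from $S_e\leq e_i^0\leq q_i$ and $S_f\leq f_i^0\leq q_i$. Hence $l_i\geq q_i-2q_i=-q_i$. If there are no type $f$ parts, or no type $e$ parts, the corresponding sum is simply $0$ and the same bounds hold.

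Finally, $l=l_1+l_2+l_3$ and $q_1+q_2+q_3=r$, so summing the column inequalities gives $-r\leq l\leq r$.

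I do not expect a real obstacle. The only point needing care is to state the pairing argument correctly in both parity cases, according to whether the number of terms is odd or even.
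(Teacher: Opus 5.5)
Your proof is correct and follows the same route as the paper's: each alternating sum of a decreasing sequence lies between $0$ and its leading term, which is at most $q_i$, so $0\leq S_e+S_f\leq 2q_i$, and summing over the three columns with $q_1+q_2+q_3=r$ gives the bound on $l$. You do not need Lemma \ref{lem_typemtotypee}, since type $m$ parts never enter $l_i$, but invoking it does no harm.
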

\begin{proof}
    By the order assumption $e_i^0\geq e_i^1\geq ...\geq e_i^{s_{i,e}}$ and $f_i^0\geq f_i^1\geq ...\geq f_i^{s_{i,f}}$,
    $$\sum_{j=0}^{s_{i,e}}(-1)^{j}e_i^j\leq e_i^0\text{ and } \sum_{j=0}^{s_{i,f}}(-1)^{j}f_i^j\leq f_i^0.$$
    Since $e_i^0, f_i^0\leq q_i$, $0\leq \sum_{j=0}^{s_{i,e}}(-1)^{j}e_i^j
    +\sum_{j=0}^{s_{i,f}}(-1)^{j}f_i^j\leq 2q_i$ and hence $-q_i\leq l_i\leq q_i$.
\end{proof}

We will use Lemma \ref{lem_lincorrbound} to say that if $\beta\geq r+D$, then $\Delta\geq D$ or if $\beta_i\geq q_i+D_i$, then $\Delta_i\geq D_i$. Unlike for the $GL(r)$-case, the column dimension can be negative even if the total dimension $\Delta$ is positive. Our approach will be similar in that we will still figure out what the minimal dimension of a column is. However, we will be forced to consider the next smallest column dimensions.

\begin{example}
    A column $\left(\frac{r}{2}, \frac{r}{2}\right)$ has linear correction $l_q=\frac{r}{2}$ or $\frac{-r}{2}$. A column $\left(\frac{r}{3},\frac{r}{3},\frac{r}{3}\right)$ always has $l_q=0$. Both of these have box dimension zero.
    \begin{figure}[h]
    \centering
    \subfloat[\centering $l_q=\frac{r}{2}, \frac{-r}{2}$]{{\begin{tikzpicture}[scale=0.66]
        \filldraw[color=white] (0,0) rectangle (1,1); 
        \draw[very thick] (0,1) rectangle (2,5);
        \filldraw[color=black, fill=gray!15, thick] (0,3) rectangle (2,5) node[pos=.5] {$\left(\frac{r}{2}\right)^e$};%
        \filldraw[color=black, fill=gray!15, thick] (0,1) rectangle (2,3) node[pos=.5] {$\left(\frac{r}{2}\right)^e$};%
        \draw[very thick] (3,1) rectangle (5,5);
        \filldraw[color=black, fill=gray!15, thick] (3,3) rectangle (5,5) node[pos=.5] {$\left(\frac{r}{2}\right)^e$};%
        \filldraw[color=black, fill=gray!15, thick] (3,1) rectangle (5,3) node[pos=.5] {$\left(\frac{r}{2}\right)^f$};%
        \end{tikzpicture}
        }}%
    \qquad
    \subfloat[\centering $l_q=0$]{\begin{tikzpicture}[scale=0.66]
        \draw[very thick] (0,0) rectangle (2,6);
        \filldraw[color=black, fill=gray!15, thick] (0,6) rectangle (2,4) node[pos=.5] {$\left(\frac{r}{3}\right)^e$};%
        \filldraw[color=black, fill=gray!15, thick] (0,4) rectangle (2,2) node[pos=.5] {$\left(\frac{r}{3}\right)^e$};%
        \filldraw[color=black, fill=gray!15, thick] (0,0) rectangle (2,2) node[pos=.5] {$\left(\frac{r}{3}\right)^e$};%
        \draw[very thick] (3,0) rectangle (5,6);
        \filldraw[color=black, fill=gray!15, thick] (3,6) rectangle (5,4) node[pos=.5] {$\left(\frac{r}{3}\right)^e$};%
        \filldraw[color=black, fill=gray!15, thick] (3,4) rectangle (5,2) node[pos=.5] {$\left(\frac{r}{3}\right)^e$};%
        \filldraw[color=black, fill=gray!15, thick] (3,2) rectangle (5,0) node[pos=.5] {$\left(\frac{r}{3}\right)^f$};%
        \end{tikzpicture}}%
    \end{figure}
\end{example}

Roughly, a column partition that looks almost like an even number of $q$ boxes will have linear correction around $\pm q$, and if odd, then around $0$. In the following lemma, we find all possible choices of linear correction when the number of parts is odd. Note that this gives all possible choices of linear correction when the number of parts is even by adding a size zero part. Then, for the sake of making computation easier in Section \ref{sec_5}, we use this to find all possible column dimensions when there is at least one part of size $q$. Notice that the result shows that for such partitions, the choice of $\varepsilon$ does not matter.

\begin{lemma}\label{lem_colqq_qqa}
    Suppose we have a partition $(\lambda^0,...,\lambda^s)$ with $\lambda^0\geq ...\lambda^s\geq 0$ and $s$ even. Then the possible choices of linear correction are $q-\lambda^0-\sum_{j=1}^{s/2}\pm (\lambda^j-\lambda^{j+1})$. In particular, if given a column of the form $(q,...,q,a_1,...a_t)$ with $q\geq a_1\geq ...\geq a_t\geq 0$, at least one part of size $q$, and $t$ such that the number of parts is odd, then the possible column dimensions are as follows. 
    \begin{itemize}
        \item If there is an even number of $q$'s, then the possible column dimensions are $\beta_q\pm (q-a_1)\pm (a_2-a_3)\pm ...\pm (a_{t-1}-a_{t})$
        \item If there is an odd number of $q$'s, then the possible column dimensions are $\beta_q\pm (a_1-a_2)\pm (a_3-a_4)\pm ...\pm (a_{t-1}-a_t)$.
    \end{itemize}
    where $\beta_q=\sum_{j=1}^ta_j(q-a_j)$ is the column box dimension.
\end{lemma}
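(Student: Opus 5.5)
The plan is to track the signs that the parts receive in the two alternating sums, processing the parts of the column in decreasing order. Write $T=\sum_{j}(-1)^je^j+\sum_j(-1)^jf^j$, so that $l=q-T$. Merge the type $e$ and type $f$ parts into the single decreasing sequence $\lambda^0\geq\lambda^1\geq\dots\geq\lambda^s$. Then $T=\sum_j\sigma_j\lambda^j$, where $\sigma_j=+1$ if $\lambda^j$ sits in an even position within its own type and $\sigma_j=-1$ otherwise. When parts of equal size occur, I will fix a tie-breaking order; this is harmless since equal parts contribute equally. Hence $\sigma_j$ depends only on how many earlier parts have the same type as $\lambda^j$. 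The claim to prove is that, as the $e/f$ labelling varies, $T$ ranges exactly over $\lambda^0+\sum_{k=1}^{s/2}\pm(\lambda^{2k-1}-\lambda^{2k})$ with all sign patterns occurring.

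The key step is the following observation about odd-length prefixes. After the prefix $\lambda^0,\dots,\lambda^{2k}$ has been labelled, it contains an odd number of parts, so exactly one type has been used an odd number of times. The next sign for that type is therefore $-$, and the next sign for the other type is $+$. I then check the four ways to label the next pair $a=\lambda^{2k+1}\geq b=\lambda^{2k+2}$.
\begin{itemize}
\item Both go to the odd type: the contribution is $-a+b$.
\item Both go to the even type: the contribution is $+a-b$.
\item $a$ goes to the odd type and $b$ to the even type: both types are then even, $b$ receives $+$, and the contribution is $-a+b$.
\item $a$ goes to the even type and $b$ to the odd type: both types are then odd, $b$ receives $-$, and the contribution is $+a-b$.
\end{itemize}
In every case the pair contributes $\pm(a-b)$, both signs are attainable from any state, and afterwards we are again in the odd-prefix state. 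Since $\lambda^0$ always receives $+$, induction on $k$ gives the first assertion. If the number of parts is even, one adds a part of size zero and applies the same argument.

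For the column statement, take $\lambda=(q,\dots,q,a_1,\dots,a_t)$ with $N$ copies of $q$. First compute the box dimension. Since $r\cdot$ here means the column height, which is $Nq+\sum a_j$, we get $\beta=q(Nq+\sum a_j)-Nq^2-\sum a_j^2=\sum a_j(q-a_j)=\beta_q$. For the linear correction, pair up the indices as $(1,2),(3,4),\dots$.
\begin{itemize}
\item If $N$ is even, then $\lambda^0=q$, the pairs lying inside the run of $q$'s contribute $0$, and the last $q$, namely $\lambda^{N-1}$, is paired with $a_1$. This gives $l=q-q\mp(q-a_1)\mp(a_2-a_3)\mp\cdots$.
\item If $N$ is odd, then all $q$'s other than $\lambda^0$ pair among themselves, and the remaining pairs are $(a_1,a_2),(a_3,a_4),\dots$.
\end{itemize}
Finally, $\Delta_i=\beta_q-\varepsilon l$, and since every sign pattern is attainable, multiplying by $\varepsilon=\pm1$ does not change the set of values. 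This yields the two displayed lists, independently of $\varepsilon$.

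The main obstacle is only the careful state bookkeeping in the four-case pair analysis. In particular, one must confirm that both signs are always reachable and that the parity state returns to the odd-prefix state after each pair, since that return is what makes the choices for different pairs independent.
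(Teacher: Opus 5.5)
Your proof is correct and follows the paper's argument: the same parity-state induction over consecutive pairs $(\lambda^{2k-1},\lambda^{2k})$ using the same four labelling cases, followed by the same specialization in which the $q$'s cancel in pairs and the last $q$ is paired with $a_1$ when the number of $q$'s is even. Like the paper, which explicitly drops it, you may ignore the convention $e^0\geq f^0$, because the linear correction is symmetric under swapping the types $e$ and $f$.
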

\begin{proof}
    For this proof, we drop our assumption that $e^0\geq f^0$ and swap the two types as convenient. Refer to $\pm \lambda^j$ with the correct sign to fit into the linear correction sum as the contribution to the linear correction of a part $\lambda^j$. Write $\lambda^{j,e}$ or $\lambda^{j,f}$ when $\lambda^j$ has been given type $e$ or $f$ respectively. Let $s_{e,>\lambda^j}$ denote the number of type $e$ parts above part $\lambda^j$ and $s_{f,>a\lambda^j}$ the number of type $f$ parts above part $\lambda^j$. 

    Suppose one of $s_{e,>\lambda^j}$ and $s_{f,>\lambda^j}$ is odd and the other is even. Up to swapping type $e$ and $f$, assume $s_{e,>\lambda^j}$ is odd. The choices of contribution for $\lambda^j$ and $\lambda^{j+1}$ are $-\lambda^{j,e}+\lambda^{j+1,e}, -\lambda^{j,e}+\lambda^{j+1,f}, +\lambda^{j,f}-\lambda^{j+1,e},$ and $ +\lambda^{j,f}-\lambda^{j+1,f}$. Now $s_{e,>\lambda^{j+2}}$ and $s_{f,>\lambda^{j+2}}$ satisfy again that one is odd and one is even. By induction starting with $j=1$, the possible choices of linear correction are then $q-\lambda^0-\sum_{j=1}^{s/2}\pm (\lambda_j-\lambda_{j+1})$. 

    To find the possible column dimensions of $(q,...,q,a_1,...a_t)$ in the case where the number of $q$ is odd, apply the induction starting at $a_1$ and note that the contribution to the linear correction of $(q,...,q)$ is always $-q$. The possible choices of linear correction are then 
    $$q-q\pm (a_1-a_2)\pm ...\pm (a_{t-1}-a_{t}).$$ For the even number of $q$ case, apply the induction starting at $a_2$ to get the possible linear corrections for $(q,...,q,a_1,...,a_t)$. The contribution of $(q,...,q,a_1)$ is either $-a_1$ if $s_{e,>a_1}$ is even or $-2q+a_1$ if $s_{f,>a_1}$ is odd. The possible contribution of $a_2,...,a_t$ is $\pm (a_2-a_3)\pm ...\pm (a_{t-1}-a_t)$. Then the possible linear corrections for $(q,...,q,a_1,...,a_t)$ are
    $$q-a_1\pm (a_2-a_3)\pm ...\pm (a_{t-1}-a_t), q-2q+a_1\pm (a_2-a_3)\pm...\pm (a_{t-1}-a_t).$$
\end{proof}

Just as in the general linear case, we want to find the minimal column dimension and when exactly it is achieved. We first prove an analog of Prop \ref{prop_mincoldim}. Recall the discussion before Prop \ref{prop_mincoldim}, that we can get from any partition of $r$ to any lower one in the dominance ordering using one of two moves to the Young diagram. 

We refer to the first one as a move of the first type and the second as a move of the second type. Recall that this can be written as changing $(\lambda^0,...,\lambda^s)$ respectively to
\begin{enumerate}
    \item (first type) $(\lambda^0,...,\lambda^{j-1},\lambda^j-1, \lambda^{j+1},...,\lambda^{k-1},\lambda^{k}+1,\lambda^{k+1},...,\lambda^s)$,
    \item (second type) $(\lambda^0,...,\lambda^{j-1},\lambda^{j}-1,\lambda^{j+1},...,\lambda^s,1)$.
\end{enumerate}

\begin{prop}\label{prop_quadDelta_domorder}
    Let $\mu<\lambda$ be partitions of $r$ with all parts $\leq q$ and let $\Delta_{\lambda}, \Delta_{\mu}$ be the minimal dimensions possible using the partitions $\lambda, \mu$ respectively. Then $\Delta_{\lambda}\leq \Delta_{\mu}$. In particular, if $\mu$ arises by applying one move to the Young diagram, then (using the notation above)
    \begin{enumerate}
        \item for a move of the first type, equality is only possible if $\lambda^j-\lambda^k=2$.
        \item for a move of the second type, equality is only possible if $\lambda^j=2$.
    \end{enumerate}
\end{prop}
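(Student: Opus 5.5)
The plan is to turn Lemma \ref{lem_colqq_qqa} into a closed formula for the minimal column dimension of a partition, and then compare two partitions that differ by a single move. Take a partition $\lambda=(\lambda^0,\dots,\lambda^s)$ with all parts $\leq q$. If needed, append a zero part so that $s$ is even, which changes nothing. Set $d_i=\lambda^{2i-1}-\lambda^{2i}\geq 0$ for $i=1,\dots,s/2$. By Lemma \ref{lem_colqq_qqa}, the achievable linear corrections are exactly $q-\lambda^0-\sum_i \pm d_i$ with the signs chosen independently. Hence the achievable dimensions are $\beta_\lambda-\varepsilon(q-\lambda^0)+\sum_i\pm d_i$, and the minimum is
$$\Delta_\lambda=\beta_\lambda-\varepsilon(q-\lambda^0)-\sum_{i}(\lambda^{2i-1}-\lambda^{2i}).$$
Write this as $\Delta_\lambda=\beta_\lambda+\Lambda(\lambda)$. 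The key observation is that $\Lambda(\lambda)=-\varepsilon q+\varepsilon\lambda^0-\lambda^1+\lambda^2-\lambda^3+\cdots$ is a linear function of the sorted entries, and every coefficient is $\pm1$.

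By transitivity of the dominance order and the fact that any $\mu<\lambda$ is reached by a chain of moves, it suffices to treat $\mu$ obtained from $\lambda$ by a single move. For a move of the first type, I will choose the indices so that the sequence stays sorted. Take $j$ to be the last index in its block of equal parts and $k$ the first index in its block; this gives the same multiset $\mu$. Then $\mu$ differs from $\lambda$ in exactly two sorted positions, each by exactly $1$, so $|\Lambda(\mu)-\Lambda(\lambda)|\leq 2$. The computation in the proof of Prop \ref{prop_mincoldim} gives $\beta_\mu-\beta_\lambda=2(\lambda^j-\lambda^k)-2$. Therefore
$$\Delta_\mu-\Delta_\lambda\geq 2(\lambda^j-\lambda^k)-4\geq 0,$$
using that $\lambda^j-\lambda^k\geq 2$ is required for the move. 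Equality forces $\lambda^j-\lambda^k=2$.

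For a move of the second type, I will view the new part $1$ as replacing a trailing zero, padding $\lambda$ with extra zeros if necessary. Again take $j$ to be the last index of its block. Then exactly two sorted entries change, each by $1$, so again $|\Lambda(\mu)-\Lambda(\lambda)|\leq 2$. Meanwhile $\beta_\mu-\beta_\lambda=2\lambda^j-2$. This gives $\Delta_\mu-\Delta_\lambda\geq 2\lambda^j-4\geq 0$, since $\lambda^j\geq 2$, with equality only if $\lambda^j=2$.

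The main point needing care is the first step: justifying that the minimal dimension really equals the displayed formula. This requires the signs in Lemma \ref{lem_colqq_qqa} to be freely and independently choosable, and the zero-padding to preserve the set of achievable linear corrections. A second point is checking that the choice of $j,k$ within equal blocks keeps the sequence sorted, so that $\Lambda$ changes in only two coordinates. Once these are in place, the rest is the elementary estimate above.
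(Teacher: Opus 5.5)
Your proof is correct. Its skeleton matches the paper's: reduce to a single move, use $\beta_\mu-\beta_\lambda=2(\lambda^j-\lambda^k)-2$ (resp.\ $2\lambda^j-2$), and show that the linear part changes by at most $2$.

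Where you differ is in how the minimization over type assignments is handled.

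The paper never computes $\Delta_\lambda$. It takes an assignment of types $e,f$ to the parts of $\mu$, in particular an optimal one, and copies it positionally onto $\lambda$. For this fixed pattern, $l_\mu-l_\lambda=(-1)^{\varepsilon_j}-(-1)^{\varepsilon_k}$. So the copied assignment already gives $\lambda$ a dimension no larger than the chosen one for $\mu$, and $\Delta_\lambda$ is at most that.

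You instead use the full content of Lemma \ref{lem_colqq_qqa}: the signs are independent, and the pairs are the disjoint pairs $(\lambda^{2i-1},\lambda^{2i})$, as its proof intends. This gives the closed form $\Delta_\lambda=\beta_\lambda-\varepsilon q+\varepsilon\lambda^0-\lambda^1+\lambda^2-\cdots$. The comparison then reduces to evaluating one fixed $\pm1$-linear functional on two sorted sequences that differ in two entries by $1$.

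What each approach buys:
\begin{itemize}
\item The paper's coupling is more self-contained. It only needs that copying a type pattern changes the linear correction by at most $2$, not a description of all achievable linear corrections.
\item Your version yields an explicit formula for the minimal column dimension. This is essentially the quantity the paper recomputes case by case in Section \ref{sec_5}.
\item Your version also makes explicit the sortedness convention: $j$ last in its block and $k$ first in its block, with the new part $1$ replacing a trailing zero. The paper's ``make the same choices for $\lambda$'' tacitly needs this, so that the within-type ordering and hence the signs $\varepsilon_x$ are unchanged.
\end{itemize}
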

\begin{proof}
    Since any such $\mu$ can arise from a sequence of the two moves applied to $\lambda$, it is sufficient to show $\Delta_{\lambda}\leq \Delta_{\mu}$ when $\mu$ arises by applying one move. 

    Suppose $\mu$ arises from a move of the first type, i.e., 
    $$\mu=(\lambda^0,...,\lambda^{j-1}, \lambda^j-1,\lambda^{j+1},...,\lambda^{k-1}, \lambda^k+1,\lambda^{k+1},...,\lambda^s).$$ Consider any choice of types $e,f$ for the parts of $\mu$. Make the same choices for $\lambda$, i.e., the type of $\lambda^x$ is the same type as $\mu^x$. Write the linear correction $l_{\lambda}=q-\sum_{x} (-1)^{\varepsilon_x}\lambda^x$ where $\varepsilon_x$ is $\pm 1$ according to the type and parity of the ordering among its type. Then
    $$l_{\mu}=q-\sum_{x\neq j,k}(-1)^{\varepsilon_x}\lambda^x-(-1)^{\varepsilon_j}(\lambda^j-1)-(-1)^{\varepsilon_k}(\lambda^k+1)=l_{\lambda}+(-1)^{\varepsilon_j}-(-1)^{\varepsilon_k}$$
    and it follows that $|l_{\mu}-l_{\lambda}|\leq 2$. We can view the second type as a move of the first type with $k=s+1$, where we allow $\lambda^{s+1}=0$. Then for either type of move move, $|l_{\mu}-l_{\lambda}|\leq 2$ where $\mu$ is achieved from $\lambda$ by applying one move.

    Now, consider the box dimension. For the first kind of move,
    $$\beta_{\mu}-\beta_{\lambda}=2\lambda_j-2\lambda_k-2\geq 2$$
    since $\lambda_j-\lambda_k\geq 2$. Since the linear correction differs by at most $2$, it follows from $\Delta_{\mu}\geq \Delta_{\lambda}$ with equality only possible if $\lambda_j-\lambda_k=2$. 

    For the second kind of move, 
    $$\beta_{\mu}-\beta_{\lambda}=2\lambda_j-2\geq 2$$
    since $\lambda_j\geq 2$. It follows that $\Delta_{\mu}\geq \Delta_{\lambda}$ with equality only possible if $\lambda_j=2$.
\end{proof}

\begin{prop}\label{prop_quadmincoldim}
    Let $r=mq+k$ where $k\equiv r\mod q$ and $k\in \{0,...,q-1\}$. Then the minimal column dimension occurs at $(q,...,q,k)$ and is given by
    \begin{itemize}
        \item if $m$ is even, $k(q-k)-q+k$
        \item if $m$ is odd, $k(q-k)-k$
    \end{itemize}
    unless 
    \begin{itemize}
        \item $k=2$ and $m$ is even, in which case the minimal column dimension also occurs at $(q,...,q,1,1)$.
        \item $k=q-2$ and $m$ is odd, in which case the minimal dimension also occurs at $(q,...,q,q-1,q-1)$.
    \end{itemize}
\end{prop}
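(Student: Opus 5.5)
The plan is to combine Prop \ref{prop_quadDelta_domorder}, which says the minimal column dimension is weakly increasing as we go down the dominance order, with the explicit list of column dimensions in Lemma \ref{lem_colqq_qqa}. Among partitions of $r$ with all parts $\leq q$, the partition $\lambda=(q,\dots,q,k)$ with $m$ parts equal to $q$ is the unique maximum in the dominance order. Every other admissible $\mu$ satisfies $\mu<\lambda$, so $\Delta_\mu\geq\Delta_\lambda$ and the minimum is attained at $\lambda$. It therefore remains to compute $\Delta_\lambda$ and to decide which other $\mu$ attain equality.

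\textbf{Step 1: computing $\Delta_\lambda$.} The column box dimension is
$$\beta_\lambda=rq-mq^2-k^2=k(q-k).$$
If $m$ is even, then $(q,\dots,q,k)$ has an odd number $m+1$ of parts with an even number of $q$'s. By Lemma \ref{lem_colqq_qqa} the possible column dimensions are $k(q-k)\pm(q-k)$, so the minimum is $k(q-k)-q+k$. If $m$ is odd, I append a zero part so that the number of parts is odd, with an odd number of $q$'s. Lemma \ref{lem_colqq_qqa} then gives the dimensions $k(q-k)\pm(k-0)$, so the minimum is $k(q-k)-k$. This already covers $k=0$, where the zero part is simply the appended one.

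\textbf{Step 2: locating equality.} Suppose $\Delta_\mu=\Delta_\lambda$ with $\mu<\lambda$. Choose a chain of single moves from $\lambda$ down to $\mu$. Since $\Delta$ is nondecreasing along the chain and agrees at both ends, every step must be an equality step. By the equality criterion in Prop \ref{prop_quadDelta_domorder}, the first move from $\lambda$ must be one of the following:
\begin{enumerate}
\item a first-type move $q\to q-1$, $k\to k+1$ with $q-k=2$, giving $(q,\dots,q,q-1,q-1)$;
\item a second-type move $k\to (k-1,1)$ with $k=2$, giving $(q,\dots,q,1,1)$;
\item the degenerate second-type move $q\to(q-1,1)$, which requires $q=2$.
\end{enumerate}
For each candidate I compute the minimal dimension via Lemma \ref{lem_colqq_qqa}.

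For $(q,\dots,q,1,1)$ the box dimension is $2(q-1)$.
\begin{itemize}
\item If $m$ is even, I append a zero part. The dimensions are $2q-2\pm(q-1)\pm 1$, with minimum $q-2=2(q-2)-(q-2)$, so equality holds.
\item If $m$ is odd, the dimensions are $2q-2\pm 0$, which exceeds $2q-6$ strictly.
\end{itemize}
The candidate $(q,\dots,q,q-1,q-1)$ with $m-1$ copies of $q$ is handled symmetrically. Equality holds exactly when $m$ is odd, since then an even number of $q$'s remains and the dimension is $2(q-1)-(q-(q-1))\pm 0$ matching $(q-2)2-(q-2)$ after simplification, while the $m$ even case is strict. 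I will double-check these two computations carefully.

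\textbf{Step 3: closing off the chains.} I must show that no further move from an equality candidate preserves $\Delta$, so that chains of length $\geq 2$ give nothing new. From $(q,\dots,q,1,1)$, a second-type move on a part equal to $1$ is impossible. A move lowering some $q$ has difference $q-1\geq 2$, so equality forces $q=3$, which is then a specific small configuration to check by hand using Lemma \ref{lem_colqq_qqa}. The same analysis applies to $(q,\dots,q,q-1,q-1)$.

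The main obstacle is this bookkeeping of degenerate small cases: $q=2$, $q=3$, $k\in\{0,1,q-1\}$, and the coincidence $k=2=q-2$, i.e.\ $q=4$. In these cases additional equality chains could a priori appear. I would treat them by directly enumerating the few partitions involved and applying Lemma \ref{lem_colqq_qqa}, to confirm that only the two listed exceptions occur.
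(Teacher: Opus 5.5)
Your argument follows the paper's route. You evaluate $(q,\dots,q,k)$ with Lemma~\ref{lem_colqq_qqa}, use the equality criterion of Prop~\ref{prop_quadDelta_domorder} to reduce to the one-move candidates $(q,\dots,q,q-1,q-1)$ and $(q,\dots,q,1,1)$, evaluate those, and then rule out a second equality move with hand checks at $q=2,3$.

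\textbf{One computation is wrong as written, though its conclusion is right.} For $m$ odd, $(q,\dots,q,q-1,q-1)$ has $m-1$ copies of $q$ (an even number) and $m+1$ parts (also even). So you must append a zero part. The lemma then gives $2(q-1)\pm\bigl(q-(q-1)\bigr)\pm\bigl((q-1)-0\bigr)$, whose minimum is $q-2=2(q-2)-(q-2)$. Your value $2(q-1)-1=2q-3$ is not $q-2$. The form with $\pm 0$ is the $m$ even computation, where it gives $2q-2>2q-6$.

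\textbf{The small cases you defer all come out strict, as in the paper.}
\begin{itemize}
\item $(3,\dots,3,2,2,1)$ and $(3,\dots,3,2,1,1)$ have dimensions $\{5,7\}$, against $\Delta_0=1$.
\item $(2,\dots,2,1,1,1,1)$ has dimension $4$, against $0$.
\item Your degenerate first move (iii) at $q=2$, $k=1$ gives $(2,\dots,2,1,1,1)$, with dimensions $\{2,4\}$ against $0$.
\end{itemize}
The paper's list of one-move exceptions skips this last case, so including it is slightly more careful than the original.
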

\begin{proof}
    Let $\Delta_0$ be the proposed minimal column dimension, i.e., $k(q-k)-q+k$ if $m$ is even, and $k(q-k)-k$ if $m$ is odd. Let $\Delta_{\mu}$ be a dimension of the column partition $\mu$ we are considering. By Lemma \ref{lem_colqq_qqa}, the partition $(q,...,q,k)$ has possible dimensions $k(q-k)\pm (q-k)$ if $m$ is even, and possible dimensions $k(q-k)\pm k$ if $m$ is odd. Then $\Delta_0$ is the minimal column dimension using the partition $(q,...,q,k)$. Now consider partitions coming from applying one move to the Young diagram. By Prop \ref{prop_quadDelta_domorder}, $\Delta_{\mu}>\Delta_0$ for each of these partitions except possibly if either $k=q-2$ and $\mu=(q,..,q,q-1,q-1)$, or $k=2$ and $\mu=(q,...,q,1,1)$.
    
    Let $m$ be odd with $k=q-2$ and $\mu=(q,...,q,q-1,q-1)$. By Lemma \ref{lem_colqq_qqa}, $\Delta_{\mu}\in \{q-2, q, 3q-2,3q-4\}$. Thus, the minimal dimension $\Delta_0=q-2$ is achieved with this $\mu$. The only partitions that are achieved from applying one move to $\mu$ are $\mu'=(q,...,q,q-1,q-2,1)$ and $\mu''=(q,...,q,q-1,q-1,q-1,1)$. By Prop \ref{prop_quadDelta_domorder}, $\Delta_{\mu'}>\Delta_{\mu}$ unless possibly if $q-1=2$ and $\Delta_{\mu''}>\Delta_{\mu}$ unless possibly if $q=2$. First consider $\mu'=(3,...,3,2,1,1)$. By Lemma \ref{lem_colqq_qqa}, $\Delta_{\mu'}=6\pm 1\in \{5,7\}$ which is greater than $\Delta_0=1$. Next consider $\mu''=(2,...,2,1,1,1,1)$ which has only one possible dimension $\Delta_{\mu''}=4$ which is greater than $\Delta_0=0$. Hence, any partition less than $(q,...,q,q-1,q-1)$ will have column dimension bigger than $\Delta_0$.

    Again, let $m$ be odd. Let $k=2$ and consider the partition $\mu=(q,...,q,1,1)$. Then by Lemma \ref{lem_colqq_qqa}, $\Delta_{\mu}=\beta_{\mu}=2q-2>\Delta_0=2q-6$. 

    Let $m$ be even with $k=q-2$ and consider $\mu=(q,...,q,q-1,q-1)$. Then by Lemma \ref{lem_colqq_qqa}, $\Delta_{\mu}=2q-2$ which is greater than $\Delta_0=2q-6$. 

    Again, let $m$ be even. Let $k=2$ and consider the partition $\mu=(q,...,q,1,1)$. Here, $\Delta_0=q-2$. The possible dimensions of $\mu$ are $\Delta_{\mu}\in \{q-2,q,3q-4,3q-2\}$ and hence achieve $\Delta_0$. The only partitions that are achieved by applying one move to $\mu$ are $\mu'=(q,...,q,q-1,2,1)$ and $\mu''=(q,...,q,q-1,1,1,1)$. By Prop \ref{prop_quadDelta_domorder}, $\Delta_{\mu'}>\Delta_{\mu}$ unless possibly if $q-1=2$ and $\Delta_{\mu''}>\Delta_{\mu}$ unless possibly if $q=2$. Consider $\mu'=(3,...,3,2,2,1)$. By Lemma \ref{lem_colqq_qqa}, $\Delta_{\mu'}=6\pm 1\in \{5,7\}$ which is greater than $\Delta_0=1$. Consider $\mu''=(2,...,2,1,1,1,1)$. By Lemma \ref{lem_colqq_qqa}, $\Delta_{\mu''}=4\pm 1\pm 1\in \{2, 4, 6\}$ which is greater than $\Delta_0=0$. Thus, any partition less than $(q,...,q,1,1)$ has bigger column dimension than $\Delta_0$.
\end{proof}

We can now find exactly when a column has negative dimension. In particular, due to the order assumption $q_1\geq q_2\geq q_3$, the following lemma implies that $\Delta_1<0$ only if $q_1=\frac{r}{2}$, and $\Delta_i\geq \frac{-r}{4}$ for $i=2,3$. 
\begin{lemma}\label{lem_negcol}
    The column dimension is negative if and only if $r=mq$ for $m$ even with partition $(q,...,q)$ and column dimension $-q=\frac{-r}{m}$.
\end{lemma}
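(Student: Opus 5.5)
The plan is to reduce everything to the minimal column dimension computed in Prop \ref{prop_quadmincoldim}. A column of width $q$ has dimension at least the minimal value $\Delta_0$ given there. So a column can be negative only if $\Delta_0<0$, and I will first determine exactly when that happens. Write $r=mq+k$ with $0\le k\le q-1$.

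\textbf{Case $m$ odd.} Here $\Delta_0=k(q-k)-k=k(q-k-1)$. Since $k\le q-1$, this is always $\ge 0$, so no negative column occurs.

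\textbf{Case $m$ even.} Here $\Delta_0=k(q-k)-(q-k)=(k-1)(q-k)$. Since $q-k\ge 1$, this is negative exactly when $k=0$, and then $\Delta_0=-q$. So a negative column forces $q\mid r$ with $r/q=m$ even, and then every column with these parameters has dimension $\ge -q$.

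Next I need to show that in this case the only partition achieving a negative value is $(q,\dots,q)$, with value exactly $-q$. Let $\mu$ be any other partition with parts $\le q$. Then $\mu<(q,\dots,q)$ in the dominance order. Any such $\mu$ lies below some partition obtained from $(q,\dots,q)$ by a single move, and that partition is $(q,\dots,q,q-1,1)$. By Prop \ref{prop_quadDelta_domorder}, $\Delta_\mu\ge \Delta_{(q,\dots,q,q-1,1)}$.

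To evaluate this, I will use Lemma \ref{lem_colqq_qqa}, padding with a zero part if needed to make the number of parts odd. The box dimension is $\beta=2(q-1)$. The linear correction lies in $[-q,q]$ by Lemma \ref{lem_lincorrbound}, which gives dimension $\ge 2q-2-q=q-2\ge 0$ once $q\ge 2$.

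For $q=1$, the only partition is $(1,\dots,1)$, so there is nothing to check. The bound $\beta\ge q$ for all partitions other than $(q,\dots,q)$ could also be obtained directly from Prop \ref{prop_mincoldim}, since the box dimension is then strictly positive and at least that of $(q,\dots,q,q-1,1)$.

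\textbf{Converse.} For $(q,\dots,q)$ with an even number $m$ of parts, Lemma \ref{lem_colqq_qqa} (adding a zero part) gives linear corrections $\pm q$ and box dimension $0$. So the choice with $\varepsilon l=q$ gives dimension $-q=-r/m$. Note that Lemma \ref{lem_colqq_qqa} shows both signs occur, so $\varepsilon$ is irrelevant here. This proves the "if" direction.

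The main obstacle is the careful uniqueness step: showing that no other partition reaches a negative value. This is handled by the chain through $(q,\dots,q,q-1,1)$ together with the crude bound $|l_i|\le q$ from Lemma \ref{lem_lincorrbound}, with a separate check for small $q$.
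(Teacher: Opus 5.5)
Your argument is correct and is essentially the paper's proof. You use Prop.~\ref{prop_quadmincoldim} and the factorization $(k-1)(q-k)$ to force $m$ even and $k=0$, realize $-q$ on $(q,\dots,q)$ via Lemma~\ref{lem_colqq_qqa}, and rule out every other partition by comparison with $(q,\dots,q,q-1,1)$ through Prop.~\ref{prop_quadDelta_domorder}. The only difference is cosmetic: you bound that partition's dimension below by $2q-2-q=q-2\ge 0$ using $|l|\le q$ (handling $q=1$ separately), whereas the paper reads off its exact values $q$ and $3q-4$ from Lemma~\ref{lem_colqq_qqa}.
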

\begin{proof}
    Write $r=mq+k$. Assume $m$ is odd. Then by Prop \ref{prop_quadmincoldim}, $k(q-k)-k\leq \Delta<0$ implies $(q-k)\leq 0$, which is not possible.

    Now assume $m$ is even. Then
    $$k(q-k)-q+k=(k-1)(q-k)<0$$
    only if $k-1< 0$ which can only occur if $k=0$. By Lemma \ref{lem_colqq_qqa}, the partition $(q,...,q)$ can have column dimension $-q$. Any other partition will be lower in the dominance ordering than $(q,...,q,q-1,1)$. By Lemma \ref{lem_colqq_qqa}, the column dimension using $(q,...,q,q-1,1)$ is greater than $q>0$. The result then follows from Prop \ref{prop_quadDelta_domorder}.
\end{proof}

\subsection{Quadratic Reductions}\label{subsec_quadbound}

We will claim for the non-overlapping case that the largest rank $r$ to achieve dimension $\Delta=2d$ is $r=6d+36=3\Delta+36$ or equivalently $\Delta=\frac{r}{3}-12$, and that it is achieved by exactly the following configurations.

\begin{example}\label{ex_mincases}\textbf{(Claimed minimal dimension non-overlapping configurations)}
    The configurations are listed with only one possibility for their linear correction per $\varepsilon$, but there are multiple choices of types that produce the same linear correction and so the dimension. We list the three column partitions where $\lambda^e$ and $\lambda^f$ mean that the square of size $\lambda$ has type $e$ or $f$ respectively. Let $q_1=\frac{r}{2}, q_2=\frac{r}{3}$, and $q_3=\frac{r}{6}$. The configurations are
    \begin{itemize}
        \item $\varepsilon=-1$, $(q_1^e,q_1^f),(q_2^e,q_2^e,q_2^e),(q_3^e,q_3^e,q_3^e,q_3^e,q_3^e,(q_3-2)^e,2^e)$.
        \item $\varepsilon=1$, $(q_1^e,q_1^e),(q_2^e,q_2^e,q_2^e),(q_3^e,q_3^e,q_3^e,q_3^e,q_3^e,(q_3-2)^f,2^e)$.
        \item $\varepsilon=-1$, $(q_1^e,q_1^f), (q_2^e,q_2^e,q_2^e), (q_3^e,q_3^e,q_3^e,q_3^e, q_3^e,(q_3-3)^f,3^e)$.
        \item $\varepsilon=1$, $(q_1^e,q_1^e), (q_2^e,q_2^e,q_2^e), (q_3^e,q_3^e,q_3^e,q_3^e, q_3^e,(q_3-3)^e,3^e)$.
    \end{itemize}
    In each of these, the column dimensions are $\Delta_1=\frac{-r}{2}, \Delta_2=0,$ and $\Delta_3=\frac{5r}{6}-12$ and hence $\Delta=\frac{r}{3}-12$. We show the calculations for the first two. Observe that $\beta_1=0, \beta_2=0$, and $\beta_3=4q_3-8$. For the first column, $(q_1^e,q_1^f)$ has linear correction $q_1-q_1^e-q_1^f=-q_1=\frac{-r}{2}$ and $(q_1^e,q_1^e)$ has linear correction $q_1-q_1^e+q_1^e=q_1=\frac{r}{2}$. Then $\Delta_1=\beta_1-\varepsilon l_1=\frac{-r}{2}$. For the second column, $(q_2^e,q_2^e,q_2^e)$ has linear correction $q_2-q_2^e+q_2^e-q_2^e=0$ so $\Delta_2=0$. For the third column, $(q_3^e,q_3^e,q_3^e,q_3^e,q_3^e,(q_3-2)^e,2^e)$ has linear correction
    $$l_3=q_3-q_3^e+q_3^e-q_3^e+q_3^e-q_3^e+(q_3-2)^e-2^e=q_3-4$$
    while
    $(q_3^e,q_3^e,q_3^e,q_3^e,q_3^e,(q_3-2)^f,2^e)$ has linear correction
    $$l_3=q_3-q_3^e+q_3^e-q_3^e+q_3^e-q_3^e+2^e-(q_3-2)^f=-q_3+4.$$
    Hence $\Delta_3=\beta_3-\varepsilon l_3=4q_3-8+q_3-4=\frac{5r}{6}-12$.
\end{example}

In the next two sections, we work to prove:
\begin{theorem}\label{thm_quadthm_config}(Theorem \ref{thm_quadthm_configsec2})
    Let $d\geq 2$ be even. Any quadratic non-overlapping configuration of dimension $\Delta=2d$ has rank $r\leq 6d+36$, and $r=3\Delta+12=6d+36$ is achieved exactly by the configurations described in Example \ref{ex_mincases}.
\end{theorem}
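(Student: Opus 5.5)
The plan is to prove the contrapositive in the form used for Theorem \ref{thm_GLrbound}: every non-overlapping quadratic configuration (by Lemma \ref{lem_typemtotypee} we may assume only type $e$ and $f$ squares) with $\Delta>0$ satisfies $\Delta\geq \frac{r}{3}-12$. Moreover, equality holds only for the configurations of Example \ref{ex_mincases}. Since $\Delta=2d$, this gives $r\leq 3\Delta+36=6d+36$. So assume $0<\Delta\leq\frac r3-12$ and derive strong restrictions. As in the $GL$ case, write $r=n_iq_i+c_i$ with $c_i\in(-\frac{q_i}{2},\frac{q_i}{2}]$. Proposition \ref{prop_quadmincoldim}, with Lemma \ref{lem_lincorrbound}, gives a lower bound of roughly $\Delta_i\geq |c_i|(q_i-|c_i|)-q_i$ for each column. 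Lemma \ref{lem_negcol} says only columns of the exact form $(q,\dots,q)$ with an even number of parts can be negative, and they contribute exactly $-q$. Hence $\Delta_1\geq -\frac r2$ and $\Delta_2,\Delta_3\geq-\frac r4$. The point is that a single negative column must be compensated by the other two.

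The first reduction step is the analogue of Simpson's key proposition (this is Prop \ref{prop_nnpair} mentioned in the introduction). Using the order assumption $q_1\geq q_2\geq q_3$, we bound $n_1,n_2$. For instance, $n_1=1$ forces $\Delta_1\gtrsim c_1q_1/2-q_1$, which is large unless $c_1$ is tiny, and $n_1\geq 4$ contradicts $q_1\geq r/3$. We then sum the column bounds $\Delta_i\geq |c_i|q_i/2 - q_i$ together with the negativity constraints. This should force $(n_1,n_2)\in\{(2,2),(3,3),(2,4),(2,3)\}$ and $|c_i|\leq n_i$ for $i=1,2$, since otherwise $\Delta$ already exceeds $\frac r3-12$ linearly in $r$. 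Because $q_3=r-q_1-q_2$, the third column is then determined up to a bounded perturbation.

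The second step is a finite case analysis over these $(n_1,n_2,c_1,c_2)$. In each case, Proposition \ref{prop_quadDelta_domorder} reduces attention to the partitions highest in dominance order. Lemma \ref{lem_colqq_qqa} lists exactly all column dimensions of partitions $(q,\dots,q,a_1,\dots,a_t)$, so each column contribution becomes an explicit linear function of $r$. Near-maximal partitions (one or two moves below the top) also have to be checked, since a $\pm$ linear correction of size about $q$ can compensate a box-dimension increase of order $2q$. For each case we then compare $\Delta$ with $\frac r3-12$. The expected result is that only $(2,3)$ with $c_1=c_2=0$, i.e.\ $(q_1,q_2,q_3)=(\frac r2,\frac r3,\frac r6)$, comes close. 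There the first column is $(q_1,q_1)$ with $\Delta_1=-\frac r2$ and the second is $(q_2,q_2,q_2)$ with $\Delta_2=0$. The third column must then have $\Delta_3\geq\frac{5r}6-12$ while $r$ is divisible by $q_3$. Since total $\Delta>0$ forbids $(q_3,\dots,q_3)$, we minimize over partitions just below it in dominance order. Lemma \ref{lem_colqq_qqa} computes $(q_3^5,q_3-a,a)$ as $2a(q_3-a)\pm(q_3-2a)$. Its minimum over $a\geq1$ compatible with the bound is attained at $a=2,3$, giving $5q_3-12$, whereas $a=1$ gives $\Delta_3=q_3\cdot 3-2$, yielding $\Delta<\frac r3-12$? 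This must be checked against parity (here $\Delta$ must be even, with $r$ parity constraints), which is presumably why $a=1$ is excluded. Finally, the cases $(2,2),(3,3),(2,4)$ are handled similarly and should give $\Delta\geq\frac r2-O(1)$ or $\frac{2r}{3}-O(1)$.

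I expect the main obstacle to be this second step. Each case demands care with small $|c_i|$, with the special minimal partitions of Proposition \ref{prop_quadmincoldim} (for $k=2$ or $k=q-2$), and with parity/evenness constraints that exclude borderline configurations. Small $r$, where the inequalities $|c_i|\leq n_i$ are not yet meaningful, also needs separate treatment. Securing the sharp constant $36$ and the exact list in Example \ref{ex_mincases} depends on getting every one of these sub-cases right.
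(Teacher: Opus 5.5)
Your architecture is the paper's: Lemma \ref{lem_typemtotypee}, Lemma \ref{lem_negcol}, the reduction Prop \ref{prop_nnpair}, then casework with Lemma \ref{lem_colqq_qqa} and Prop \ref{prop_quadDelta_domorder}. However, several steps you sketch would not go through as written.

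In the reduction, the bound $\Delta_i\gtrsim |c_i|q_i/2-q_i$ is too weak. It gives $0$ for $n_1=1$, $c_1=2$, and only about $\frac r4$ for $n_1=2$, $|c_1|=3$, which cannot offset the available estimate $\Delta_2+\Delta_3\geq -\frac r2$. You need the exact minima of Prop \ref{prop_quadmincoldim}: $(|c|-1)(q-|c|)$ for $n$ even and $|c|(q-|c|)-|c|$ for $n$ odd. You also need sharper control of the negative columns, as in Lemma \ref{lem_q2c2res}.

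Nor do the other cases sit comfortably at $\frac r2-O(1)$ or $\frac{2r}{3}-O(1)$. The case $(3,3)$ with $c_1=-1$ only gives $\frac r3-\frac53$. The case $(2,3)$ with $c_1=0$, $c_2=\pm1$ only gives $\frac r3-\frac{28}{3}$. So the constant $12$ is won by small margins, and exact enumeration is needed throughout. Small $r$, by contrast, needs no separate treatment: $\Delta\geq 4$ gives $3\Delta+36\geq 48$.

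The decisive gap is in the extremal case $(q_1,q_2,q_3)=(\frac r2,\frac r3,\frac r6)$. Below, exponents denote multiplicity, as in your notation.

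\emph{The exclusion of $a=1$ is misdiagnosed.} For $(q_3^5,q_3-1,1)$, Lemma \ref{lem_colqq_qqa} gives $\Delta_3\in\{q_3,3q_3-4\}$, not $3q_3-2$. With $\Delta_1=-3q_3$ and $\Delta_2=0$ the total is $-2q_3$ or $-4$. So this case is excluded because $\Delta\leq 0$; parity plays no role, since $r=6q_3$. The same applies to the value $3q_3-4$ of $(q_3^5,q_3-2,2)$.

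\emph{Restricting the third column to the family $(q_3^5,q_3-a,a)$ is unjustified.} Prop \ref{prop_quadDelta_domorder} only controls the \emph{minimum} over type choices. What you need is the smallest achievable value above $\frac r2$. You therefore have to list all type choices for every partition within two moves of $(q_3^6)$, and also for $(q_3^4,q_3-1,q_3-2,3)$ and $(q_3^5,q_3-3,2,1)$. For example, $(q_3^4,(q_3-1)^2,2)$, $(q_3^4,(q_3-1)^2,1,1)$ and $(q_3^5,q_3-2,1,1)$ all have values in $\{\frac r2-4,\frac r2-2,\frac r2\}$, giving total $\leq 0$, or values $\geq\frac{5r}{6}-10$. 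Only after these checks can dominance dispose of all remaining partitions, and that step needs $r\geq 45$.

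\emph{$\Delta_1=-\frac r2$ and $\Delta_2=0$ are not forced a priori.} The column $(q_1,q_1)$ also allows $+\frac r2$. The column $(q_2,q_2,q_2-1,1)$ gives $\frac{2r}{3}-4$, $\frac{2r}{3}-2$ and $\frac{2r}{3}$. You must check that pairing these with $\Delta_3=\pm\frac r6$ gives totals $\leq 0$ or $\geq \frac r3-4$.

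Without these three points, neither $r\leq 3\Delta+36$ nor the exact list in Example \ref{ex_mincases} follows.
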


Note that since $d$ is assumed even, we only need to be concerned with $\Delta$ that are divisible by $4$. In particular, the smallest configuration dimension we consider is $\Delta=4$. By Example \ref{ex_mincases}, we will only be concerned with rank $r\geq 48$.

Let $c_i\equiv r\mod q$ where $c_i\in (-\frac{q_i}{2}, \frac{q_i}{2}]$. This means if $c_i\geq 0$, then $c_i=k_i$ and if $c_i<0$, then $k_i=c_i+q$. Write
$$r=n_iq+c_i.$$
As mentioned before Theorem \ref{thm_GLrbound}, we think of $c_i$ as a measure of how close $q_i$ is to dividing $r$. The next couple results will show that if $\Delta\leq \frac{r}{3}-12$, then $(n_1,n_2)\in \{(2,2), (2,3),(2,4),(3,3)\}$ and $|c_i|\leq n_i$. Dealing with these cases is the content of Section \ref{sec_5}. The condition that $|c_i|\leq n_i$ is equivalent to $\frac{r}{n_i}-1\leq q_i\leq 
\frac{r}{n_i}+1$. One may notice from the proofs of the following two Lemmas that when we do not have such $n_i$ and $c_i$, the dimension tends to be much bigger than $\frac{r}{3}-12$.

Observe that by Lemma \ref{lem_colqq_qqa},
\begin{itemize}
    \item if $n_i$ is even, the minimal column dimension is $|c_i|(q_i-|c_i|)-q_i+|c_i|$.
    \item if $n_i$ is odd, the minimal column dimension is $|c_i|(q_i-|c_i|)-|c_i|$.
\end{itemize}

By Lemma \ref{lem_lincorrbound}, to show $\Delta>\frac{r}{3}-12$, it is sufficient to show that $\beta> \frac{4r}{3}-12$.

\begin{lemma}\label{lem_q1c1res}
    If $\Delta\leq  \frac{r}{3}-12$ and $r\geq 48$, then $n_1=2$ or $3$ and $|c_1|\leq n_1$.
\end{lemma}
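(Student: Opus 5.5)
We mirror the $GL(r)$ argument for the first column (Theorem \ref{thm_GLrbound}), now using the quadratic column bounds. Since the other two columns could have negative dimension, we first isolate their worst case. By Lemma \ref{lem_negcol}, with $q_2,q_3\le q_1$, each of $\Delta_2,\Delta_3$ is either nonnegative or equals $-q_i=-r/m_i$ with $m_i$ even. Moreover $\Delta_2,\Delta_3$ cannot both be negative unless $q_2=q_3=r/4$ and $q_1=r/2$; otherwise the negative one has $m_i\ge 4$. In every case $\Delta_2+\Delta_3\ge -r/2$, and if $n_1\ne 2$ then $q_1<r/2$, so we get $\Delta_2+\Delta_3\ge -r/4-r/4=-r/2$ as well. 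It therefore suffices to show $\Delta_1>\frac{r}{3}-12+\frac{r}{2}=\frac{5r}{6}-12$ whenever $(n_1,c_1)$ falls outside the allowed range. We do this with the minimal column dimension from Prop \ref{prop_quadmincoldim}, rewritten in terms of $c_1$ as stated just before the lemma: $|c_1|(q_1-|c_1|)-q_1+|c_1|$ for $n_1$ even, and $|c_1|(q_1-|c_1|)-|c_1|$ for $n_1$ odd.

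\textbf{Ruling out $n_1=1$ and $n_1\ge 4$.} Since $q_1\ge q_2\ge q_3$ forces $q_1\ge r/3$, the case $n_1\ge 4$ would give $r\ge \frac{7}{2}q_1>3q_1$, which is impossible. If $n_1=1$, then $c_1=q_2+q_3=r-q_1$, and because $q_1\ge r/2$ a negative column is impossible. We have $c_1\le q_1/2$, so $q_1-c_1\ge q_1/2$ and $q_1\ge 2r/3$. The odd-case formula then gives $\Delta_1\ge c_1(q_1-c_1-1)\ge c_1(r/3-1)$, and $\Delta_2,\Delta_3\ge 0$. With $c_1\ge 2$ this yields $\Delta\ge 2r/3-2>r/3-12$.

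\textbf{Bounding $|c_1|$ when $n_1=2$.} For $n_1=2$ we have $q_1=(r-c_1)/2$. If $c_1=0$ there is nothing to prove. For $|c_1|\ge 1$ the bound is $\Delta_1\ge(|c_1|-1)(q_1-|c_1|)$. When $|c_1|=1$ this is $0$, but then $q_1\ne r/2$, so $\Delta_2+\Delta_3\ge -r/2$ is too crude. In that situation I would use the sharper fact that a negative $\Delta_i$ for $i=2,3$ needs $q_i\mid r$ with an even quotient, which gives $\Delta_2+\Delta_3\ge -r/4$ at most in a single column. I would also use the box-dimension route: by Lemma \ref{lem_lincorrbound} it is enough to show $\beta>\frac{4r}{3}-12$. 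For $|c_1|\le 2$ we have $|c_1|\le n_1$, so the statement already allows it. For $|c_1|\ge 3$ we get $\Delta_1\ge 2(q_1-|c_1|)$, and combining $q_1-|c_1|\ge q_1/2\ge r/5$ with the factor $(|c_1|-1)$ the bound grows: for $|c_1|\ge 4$, $\Delta_1\ge 3q_1/2\ge 3r/5\cdot$const. I would compare this carefully against $\frac{r}{3}-12$ plus the negative contributions, which are at most $r/4$ because $q_1\ne r/2$. Here I expect to need the constraint $r\ge 48$ together with the slack of $12$.

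\textbf{Bounding $|c_1|$ when $n_1=3$, and the main obstacle.} For $n_1=3$ we have $c_1\le 0$, since $q_1\ge r/3$. The odd-case formula gives $\Delta_1\ge |c_1|(q_1-|c_1|-1)$ with $q_1\ge r/3$, while the negative columns contribute at least $-r/4$, because a negative column needs $q_i=r/m$ with $m\ge 4$ and both being negative forces $q_2=q_3=r/4$ together with $q_1=r/2$. For $|c_1|\ge 4$, $\Delta_1\ge 4(q_1-5)\ge 4r/3-20$, which comfortably exceeds $\frac{r}{3}-12+\frac{r}{2}$. The main obstacle is the bookkeeping in the borderline cases: small $|c_1|$ just above $n_1$ in the $n_1=2$ case, where the factor $(|c_1|-1)$ is weak. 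There I plan to use Prop \ref{prop_quadmincoldim} exactly, including its exceptional partitions, and to track precisely when columns $2$ and $3$ can be negative, rather than using the blanket bound $-r/2$.
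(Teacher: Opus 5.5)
Your cases $n_1\ge 4$ and $n_1=3$ are fine. For $n_1=3$, the integer $|c_1|$ satisfies $|c_1|<q_1/2$, so $x\mapsto x(q_1-1-x)$ is increasing on the admissible range. Hence $\Delta_1\ge 4(q_1-5)\ge\frac{4r}{3}-20$ for every $|c_1|\ge 4$, which beats $\frac{5r}{6}-12$ even against the safe bound $\Delta_2+\Delta_3\ge-\frac r2$. This is tidier than the paper's case-by-case check.

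The genuine gap is the case $n_1=2$, $|c_1|\ge 3$. This is half of the claim $|c_1|\le n_1$, and you leave it at ``I would compare this carefully''; the estimates you do write cannot close it. Replacing $q_1-|c_1|$ by $q_1/2\ge r/5$ gives only $\Delta_1\ge\frac{2r}{5}$ at $|c_1|=3$. That is below $\frac r3-12+\frac r4$ once $r\ge 66$, and below $\frac{5r}{6}-12$ once $r\ge 28$. Likewise, your $\frac{3q_1}{2}\ge\frac{3r}{5}$ for $|c_1|\ge 4$ is below $\frac{5r}{6}-12$ once $r\ge 52$.

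Your hoped-for improvement, that columns $2,3$ cost at most $\frac r4$ ``because $q_1\ne r/2$'', rests on a false claim. Take $(q_1,q_2,q_3)=(\frac{7r}{12},\frac r4,\frac r6)$: it has $n_1=2$ and $q_1\ne\frac r2$, yet $q_2,q_3$ both divide $r$ with even quotient, so by Lemma \ref{lem_negcol} both $\Delta_2,\Delta_3$ can be negative. Your bound $\Delta_2+\Delta_3\ge-\frac r2$ is itself correct, since each negative column has quotient at least $4$.

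The missing step is to use $q_1=\frac{r-c_1}{2}$ exactly. Then $q_1-|c_1|\ge\frac{r-3|c_1|}{2}$, so $\Delta_1\ge(|c_1|-1)\bigl(\frac r2-\frac{3|c_1|}{2}\bigr)$. This equals $r-9$ at $|c_1|=3$ and exceeds $\frac{5r}{6}-12$ for every $3\le|c_1|\le 8$ when $r\ge 48$, so the crude $-\frac r2$ suffices. For $|c_1|\ge 9$ the paper does not look at the other columns at all: $\beta_1\ge|c_1|(q_1-|c_1|)\ge|c_1|\frac r6>\frac{4r}{3}$. Lemma \ref{lem_lincorrbound} then converts $\beta>\frac{4r}{3}-12$ into $\Delta>\frac r3-12$.

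In the case $n_1=1$, your assertion $\Delta_2,\Delta_3\ge 0$ is also false. For $r=48$ and $(q_1,q_2,q_3)=(40,4,4)$ one has $n_1=1$ and $c_1=8$, and each narrow column can have dimension $-4$. The repair is easy: a negative column has dimension exactly $-q_i$, so $\Delta_2+\Delta_3\ge-(q_2+q_3)=-c_1$. Then $\Delta\ge c_1(q_1-c_1-2)\ge c_1\bigl(\frac r3-2\bigr)\ge\frac{2r}{3}-4$. Your general $-\frac r2$ would not suffice with your estimate $c_1(\frac r3-1)$ at $c_1=2$ once $r\ge 60$. The paper instead argues via the box dimension $\beta_1$ and Lemma \ref{lem_lincorrbound}.

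Finally, your discussion of $|c_1|=1$ when $n_1=2$ is unnecessary, since $|c_1|\le 2=n_1$ is exactly what the lemma permits.
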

\begin{proof}
    Assume we have a configuration of rank $r$ with dimension $\Delta\leq\frac{r}{3}-12$ with column widths $q_i$. Write $r=n_iq_i+c_i$ with $\frac{-q_i}{2}< c_i\leq \frac{q_i}{2}$. Suppose $n_1\geq 4$. Then $3q_1\geq r=n_1q_1+c_1\geq 4q_1+c_1\geq \frac{7}{2}q_1$
    which is not possible. Suppose $n_1=1$. Then $r=q_1+c_1\leq \frac{3q_1}{2}.$ Observe that $c_1\geq 2$ since $q_1+q_2+q_3=r$ for $q_2,q_3\geq 1$. Then
    $$\beta_1\geq c_1(q-c_1)\geq c_1\frac{q_1}{2}\geq c_1\frac{r}{3}.$$
    If $c_1\geq 4$, then $\beta_1\geq \frac{4}{3}r$ and we would be done. If $c_1=3$, then $q_1=r-3$ and
    $$\beta_1\geq 3(q_1-3)=3r-18$$
    which is bigger than $\frac{4r}{3}-12$ once $r\geq 4$. If $c_1=2$, then $r=q_1+2$ and
    $$\beta_1\geq 2(q_1-2)=2(r-4)=2r-8.$$
    Hence, $n_1\in \{2,3\}$. 

    Now we show that $|c_1|\leq n_1$. Suppose $|c_1|>n_1$. Then using $q_1\geq \frac{r}{3}$,
    $$\beta_1\geq |c_1|(q_1-|c_1|)\geq |c_1|\frac{r}{6}.$$
    Once $|c_1|\geq 9$, this is bigger than $\frac{4r}{3}$ and hence $\Delta> \frac{r}{3}-12$. 

    Consider $|c_1|\leq 8$. First note that by Lemma \ref{lem_negcol} since $r-q_1\leq \frac{2}{3}r$, $\Delta_2+\Delta_3\geq -\frac{r}{2}$. In particular, in order to show that $\Delta> \frac{r}{3}-12$, it is sufficient to show $\Delta_1>\frac{r}{3}-12+\frac{r}{2}$. Let $n_1=2$. Then by Proposition \ref{prop_quadmincoldim}, 
    $$
    \Delta_1\geq (|c_1|-1)\left(\frac{r}{2}-\frac{|c_1|}{2}-|c_1|\right).$$
    Running through the possibilities $|c_1|\in \{3,4,5,6,7,8\}$, $\Delta_1> \frac{r}{3}-12+\frac{r}{2}$ once $r\geq 0,9,16,20,24,28$ respectively. Hence, when $n_1=2$, $|c_1|\leq 2$.

    Now suppose $n_1=3$ and assume $|c_1|>3$. Then by Proposition \ref{prop_mincoldim},
    $$\Delta_1\geq |c_1|(q_1-|c_1|)-|c_1|\geq |c_1|\left(\frac{r}{3}-\frac{|c_1|}{3}-|c_1|\right)-|c_1|.$$
    Running though the possibilities $|c_1|\in \{4,5,6,7,8\}$, $\Delta_1>\frac{r}{3}-12+\frac{r}{2}$ once $r\geq 25, 30,34, 38, 42$. Hence when $n_1=3$, we have $|c_1|\leq 3$.
\end{proof}

\begin{lemma}\label{lem_q2c2res}
    If $\Delta\leq \frac{r}{3}-12$ and $r\geq 48$, then $q_2\geq \frac{2r}{9}$, $n_2=2,3,4$ and $|c_2|\leq n_2$.
\end{lemma}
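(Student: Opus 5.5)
The plan is to deduce everything about the second column from Lemma \ref{lem_q1c1res} together with the order assumption $q_1\geq q_2\geq q_3$. After that, the bound $|c_2|\leq n_2$ should come from the same kind of column-dimension estimates used for the first column, with extra care about how negative the first and third columns can be.

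\textbf{Step 1: $q_2\geq \frac{2r}{9}$.} By Lemma \ref{lem_q1c1res}, $n_1\in\{2,3\}$ and $|c_1|\leq n_1$. Hence $q_1=\frac{r-c_1}{n_1}\leq \frac{r+2}{2}$, so $q_2+q_3=r-q_1\geq \frac{r}{2}-1$. Since $q_2\geq q_3$, this gives $q_2\geq \frac{r}{4}-\frac12$, which exceeds $\frac{2r}{9}$ because $r\geq 48$.

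\textbf{Step 2: $n_2\in\{2,3,4\}$.} Suppose $n_2=1$. Then $r=q_2+c_2\leq \frac32 q_2$. But $q_2\leq q_1\leq \frac r2+1$, so $r\leq \frac34 r+\frac32$, which is false for $r\geq 48$. Suppose $n_2\geq 5$. Then $r\geq \frac92 q_2\geq \frac92\left(\frac r4-\frac12\right)=\frac{9r}{8}-\frac94>r$, again a contradiction.

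\textbf{Step 3: $|c_2|\leq n_2$.} Assume $|c_2|>n_2$.
\begin{enumerate}
\item First dispose of large $|c_2|$ by the box dimension alone. We have $\beta_2\geq |c_2|(q_2-|c_2|)\geq |c_2|\frac{q_2}{2}\geq |c_2|\frac{r}{9}$. Once $|c_2|\geq 12$ this is $\geq\frac{4r}{3}$, so $\beta>\frac{4r}{3}-12$, and Lemma \ref{lem_lincorrbound} gives $\Delta>\frac r3-12$.
\item For the finitely many remaining values $n_2<|c_2|\leq 11$, control the other two columns by Lemma \ref{lem_negcol}. A column is negative only if its partition is $(q_i,\dots,q_i)$ with an even number of parts, and then $\Delta_i=-q_i$. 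So $\Delta_1+\Delta_3\geq -(q_1+q_3)=-(r-q_2)$, and in fact $\Delta_3\geq 0$ unless $q_3\mid r$ with even quotient.
\item Then bound $\Delta_2$ from below by Proposition \ref{prop_quadmincoldim}. For $n_2$ even, $\Delta_2\geq(|c_2|-1)(q_2-|c_2|)$; for $n_2$ odd, $\Delta_2\geq |c_2|(q_2-|c_2|)-|c_2|$. Substitute $q_2=\frac{r-c_2}{n_2}$.
\item For each pair $(n_2,|c_2|)$, check that $\Delta_2+\Delta_1+\Delta_3>\frac r3-12$ for $r\geq 48$, as in the proof of Lemma \ref{lem_q1c1res}.
\end{enumerate}

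\textbf{Main obstacle.} The crude bound $\Delta_1+\Delta_3\geq -(r-q_2)$ is too weak when $n_2=2$ and $|c_2|=3$ or $4$: there $\Delta_2\approx r$, while we would need roughly $\frac{4r}{3}$. To handle this I would split on whether $\Delta_1<0$.
\begin{itemize}
\item If $\Delta_1\geq 0$, we only lose $q_3\leq \frac{r}{4}$, and the estimate closes.
\item If $\Delta_1<0$, Lemma \ref{lem_negcol} forces $q_1=\frac r2$ with column $(q_1,q_1)$. Then $q_2+q_3=\frac r2$. With $n_2=2$ we have $q_2=\frac{r-c_2}{2}$, so $q_2+q_3=\frac r2$ forces $c_2>0$, $q_3=\frac{c_2}{2}$ and $q_2=\frac{r-c_2}{2}$. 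The third column then has tiny width but must fill height $r$. Here $q_3\nmid r$ or the quotient is odd, so Lemma \ref{lem_negcol} gives $\Delta_3\geq 0$. The gain in $\Delta_3$, together with the exact value $\Delta_1=-\frac r2$, should push $\Delta$ above $\frac r3-12$.
\end{itemize}
The analogous coupling for $n_2=3,4$ (where $q_3=r-q_1-q_2$ is determined by $c_1$ and $c_2$) is handled the same way. I expect this case-by-case bookkeeping to be the main work.
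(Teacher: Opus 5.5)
Your Steps 1 and 2 are correct. The ingredients of Step 3 are also the paper's: the box dimension for $|c_2|\ge 12$, then Lemma \ref{lem_negcol} with Proposition \ref{prop_quadmincoldim}, splitting on whether $q_1=\frac{r}{2}$. The gap is that your ``main obstacle'' is misplaced. As a result, the one case where the crude estimate $\Delta_1+\Delta_3\ge -(r-q_2)$ actually fails is dismissed as ``handled the same way'' and never checked.

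For $n_2=2$ nothing extra is needed. There $r-q_2\approx\frac{r}{2}$, so you only need $\Delta_2>\frac{r}{3}-12+(r-q_2)\approx\frac{5r}{6}$, not $\frac{4r}{3}$. For $|c_2|=3$ the crude bound already gives $\Delta\ge 2(q_2-3)-(r-q_2)\ge\frac{r}{2}-\frac{21}{2}$. In fact the crude bound closes for every admissible $|c_2|\le 11$ whenever $n_2\in\{2,3\}$ and $r\ge 48$.

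It fails for $n_2=4$. There $q_2\ge\frac{r}{4}-\frac{1}{2}$ forces $c_2\le 2$, so $|c_2|>4$ means $c_2\le -5$. Take $c_2=-5$, for example $r=51$ with $(q_1,q_2,q_3)=(25,14,12)$. The crude bound gives only $\Delta\ge 4(q_2-5)-(r-q_2)=\frac{r-55}{4}$, which is never above $\frac{r}{3}-12$.

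For $n_2=4$ you need $\Delta_1+\Delta_3\ge -\frac{r}{2}$ rather than roughly $-\frac{3r}{4}$. The reason is not the ``tiny third column'' of your $n_2=2$ argument.
\begin{itemize}
\item If $q_1=\frac{r}{2}$, then $q_3=\frac{r+c_2}{4}$ lies strictly between $\frac{r}{6}$ and $\frac{r}{4}$, because $5\le|c_2|\le 11$ and $r\ge 48$. So $q_3$ is not of the form $\frac{r}{m}$ with $m$ even, and $\Delta_3\ge 0$.
\item If $q_1\ne\frac{r}{2}$, then $\Delta_1\ge 0$ and $\Delta_3\ge-\frac{r}{4}$.
\end{itemize}
Since $(|c_2|-1)\bigl(\frac{r+|c_2|}{4}-|c_2|\bigr)>\frac{5r}{6}-12$ for $5\le|c_2|\le 11$ and $r\ge 48$, this finishes the case. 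This is what the paper does. It first proves $\Delta_1+\Delta_3\ge-\frac{r}{2}-\frac{r}{8}$ for all $n_2$: when $q_1=\frac{r}{2}$, a negative third column with $q_3\in\{\frac{r}{4},\frac{r}{6}\}$ would force $c_2=0$, so $\Delta_3\ge-\frac{r}{8}$. It then sharpens this to $-\frac{r}{2}$ for $n_2=4$.

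Separately, in the subcase $n_2=2$, $q_1=\frac{r}{2}$, your claim that $q_3\nmid r$ or $r/q_3$ is odd is false. For $r=48$ and $(q_1,q_2,q_3)=(24,22,2)$ we have $c_2=4$ and $r/q_3=24$, and the column $(2,\dots,2)$ has dimension $-2$. The claim does no harm only because $\Delta_3\ge -q_3=-\frac{c_2}{2}$ is already enough there.
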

\begin{proof}
    Assume we have a configuration of rank $r$ with dimension $\Delta\leq\frac{r}{3}-12$ with column widths $q_i$. Write $r=n_iq_i+c_i$ with $\frac{-q_i}{2}< c_i\leq \frac{q_i}{2}$.
    By Lemma \ref{lem_q1c1res}, $q_1\leq \frac{r}{2}+1$. Then $q_2\geq \frac{1}{2}(r-q_1)\geq \frac{r}{4}-\frac{1}{2}.$ Once $r>18$, $q_2> \frac{2r}{9}$. 
    We now proceed as we did for Lemma \ref{lem_q1c1res}. Suppose $n_2\geq 5$. Then
    $$\frac{9}{2}q_2> r=n_2q_2+c_2\geq 5q_2-\frac{q_2}{2}=\frac{9}{2}q_2$$
    which is a contradiction. 
    
    Suppose $n_2=1$. We know that
    $$\frac{r}{2}+1\geq q_1\geq q_2$$
    so $r\geq 2q_2+2$. But if $n_2=1$, then $r=q_2+c_2\leq \frac{3}{2}q_2$ which is a contradiction. Thus, $n_2\in \{2,3,4\}$.

    We aim to show that $|c_2|\leq n_2$. Assume $|c_2|>n_2$. We see that
    $$\beta_2\geq |c_2|(q_2-|c_2|)\geq |c_2|\frac{q_2}{2}\geq |c_2|\frac{r}{9}.$$
    Once $|c_2|\geq 13$, for all $r$, we have $\beta_2> \frac{4r}{3}-12$.

    In order to show $\Delta>\frac{r}{3}-12$ when $|c_2|>n_2$, it is sufficient to show that $\Delta_2>\frac{r}{3}-12+\frac{r}{2}+\frac{r}{8}$. We can see this as follows. Suppose first that $\Delta_1<0$, then by Lemma \ref{lem_q1c1res} and \ref{lem_negcol}, $q_1=\frac{r}{2}$ and $\Delta_1=\frac{-r}{2}$. If $\Delta_3<0$ as well, then $q_3=\frac{r}{n_3}$ with $n_3\in \{4,6,8,10,...\}$. However, if $n_3=4$ or $n_3=6$, then $q_2=\frac{r}{4}$ or $q_2=\frac{r}{3}$ respectively and $c_2=0<n_2$. Hence $\Delta_3\geq \frac{-r}{8}$. If we suppose instead that $\Delta_1\geq 0$, then $\Delta_1+\Delta_3\geq \Delta_3\geq \frac{-r}{4}>-\frac{r}{2}-\frac{r}{8}$.
    
    Let $n_2=2$ and assume $|c_2|>2$. By Proposition \ref{prop_quadmincoldim}, 
    $$\Delta_2\geq |c_2|(q_2-|c_2|)-q_2+|c_2|\geq (|c_2|-1)\left(\frac{r}{2}-\frac{|c_2|}{2}-|c_2|\right).$$
    Running through the possibilities $|c_2|\in \{3,4,5,6,7,8,9,10,11,12\}$, this is bigger than $\frac{r}{3}-12+\frac{r}{2}+\frac{r}{8}$ once $r\geq 0,12, 18, 22, 25, 29, 32, 35, 38, 41$ respectively. 

    Let $n_2=3$. Since $q_2\leq \frac{r}{3}$, we have $c_2\geq 0$. Assume $c_2>3$. By Proposition \ref{prop_quadmincoldim}, 
    $$\Delta_2\geq c_2(q_2-c_2)-c_2= c_2\left(\frac{r}{3}-\frac{c_2}{3}-c_2\right)-c_2.$$
    Running through the possibilities $c_2\in \{4,5,6,7,8,9,10,11,12\}$, $\Delta_2>\frac{r}{3}-12+\frac{r}{2}+\frac{r}{8}$ once $r\geq 36,38,41,44,48,52,56,60,64$. We could potentially have issues with $c_2\in \{8,9,10,11,12\}$, however note that in order to have $c_2$, we must have $q_2\geq 2|c_2|$ and $r\geq 6|c_2|+|c_2|$. Then it is only possible to have $c_2=8,9,10,11,12$ once $r\geq 56,63,70,77,84$. Comparing these to the inequalities achieved above, we can conclude that for all $r\geq 44$, if $c_2>3$, then $\Delta>\frac{r}{3}-12$.

    Let $n_2=4$ and suppose $|c_2|>4$. We see that $\Delta_1+\Delta_3\geq -\frac{r}{2}$ as follows. First suppose $\Delta_1<0$ so $q_1=\frac{r}{2}$. Then $q_3=\frac{r}{4}+\frac{c_2}{4}>\frac{r}{4}-3$. Note that $\frac{r}{4}-3>\frac{r}{6}$ once $r\geq 37$. Then if $\Delta_3<0$, $q_3=\frac{r}{4}$, but this would imply $c_2=0$. Now suppose instead that $q_1\neq \frac{r}{2}$. Then $\Delta_1\geq 0$ and  $\Delta_3\geq -\frac{r}{4}$. Either way, $\Delta_1+\Delta_3\geq \frac{-r}{2}$.
    
    By Proposition \ref{prop_quadmincoldim},
    $$
    \Delta_2\geq (|c_2|-1)\left(\frac{r}{4}-\frac{c_2}{4}-|c_2|\right).
    $$
    Suppose $c_2<0$. Then for $c_2\in \{-5,-6,-7,-8,-9,-10,-11,-12\}$, the above gives $\Delta_2>\frac{r}{3}-12+\frac{r}{2}$ once $r\geq 19,26,30,33,37,40,43,46$.

    Now suppose $c_2>0$. We cannot have $q_1=\frac{r}{2}$ since then $q_2=\frac{r}{4}-\frac{c_2}{4}<\frac{r}{4}+\frac{c_2}{4}=q_3$. It is sufficient then to show $\Delta_2>\frac{r}{3}-12+\frac{r}{4}$. The above gives this for $c_2\in \{5,6,7,8,9,10,11,12\}$ once $r\geq 32, 39, 45, 50,56, 61, 66, 71$. Configurations with $c_2=8,9,10,11,12$ occur only once $r\geq 4(2c_2)+c_2=72,81,90,99,108$. Hence, we can conclude that $|c_2|\leq 4=n_2$ for all $r\geq 48$ as desired.
\end{proof}

\begin{prop}\label{prop_nnpair}
    If $\Delta\leq \frac{r}{3}-12, r\geq 48$, then $(n_1,n_2)\in \{(2,2),(3,3),(2,4),(2,3)\}$ and $|c_i|\leq n_i$.
\end{prop}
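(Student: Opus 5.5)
Most of the statement is already in hand. Lemma \ref{lem_q1c1res} gives $n_1\in\{2,3\}$ with $|c_1|\le n_1$, and Lemma \ref{lem_q2c2res} gives $n_2\in\{2,3,4\}$ with $|c_2|\le n_2$. Together these allow six pairs $(n_1,n_2)$, so what remains is to rule out $(3,2)$, $(3,4)$ and $(2,2)$-type anomalies. The work is therefore a short comparison of the widths $q_1,q_2$ using the ordering $q_1\ge q_2\ge q_3$ and $r=q_1+q_2+q_3$.

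Since $|c_i|\le n_i$, we have $q_i\in[\frac{r}{n_i}-1,\frac{r}{n_i}+1]$, and I will use this throughout.

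\textbf{Case $(3,2)$.} Here $q_1\le \frac{r}{3}+1$ and $q_2\ge \frac{r}{2}-1$. For $r\ge 48$ this forces $q_2>q_1$, contradicting the ordering.

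\textbf{Case $(3,4)$.} Here $q_1\le\frac r3+1$ and $q_2\le\frac r4+1$, so $q_3=r-q_1-q_2\ge \frac{5r}{12}-2$. For $r\ge 48$ this gives $q_3>q_2$, again a contradiction.

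\textbf{Case $(2,2)$.} This pair is permitted, since then $q_1\approx q_2\approx\frac r2$ and $q_3\le 2$. So no elimination is needed, and $(2,2)$ simply remains in the list.

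This leaves exactly $\{(2,2),(3,3),(2,4),(2,3)\}$. The bounds $|c_i|\le n_i$ are inherited directly from the two lemmas.

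The only real obstacle is the arithmetic at the boundary: I need the strict inequalities to hold for all $r\ge 48$ once the $\pm1$ slack from $|c_i|\le n_i$ is included. I would check the two elimination cases explicitly, using $q_1=\frac{r-c_1}{3}$ and $q_2=\frac{r-c_2}{n_2}$ with the extreme values of $c_i$, to confirm that the margin is positive. For example, in case $(3,2)$ we get $q_2-q_1\ge \frac{r}{6}-2>0$. If any margin turned out tight at small $r$, I would fall back on the dimension lower bounds from the proof of Lemma \ref{lem_q2c2res}.
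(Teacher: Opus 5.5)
Your proposal is correct and follows essentially the same route as the paper. You take $n_1\in\{2,3\}$, $n_2\in\{2,3,4\}$ and $|c_i|\le n_i$ from Lemmas \ref{lem_q1c1res} and \ref{lem_q2c2res}, then use $q_i\in[\frac{r}{n_i}-1,\frac{r}{n_i}+1]$ together with $q_1\ge q_2\ge q_3$ to rule out $(3,2)$ and $(3,4)$. Your $(3,4)$ step, $q_3=r-q_1-q_2\ge\frac{5r}{12}-2>q_2$, is a slightly cleaner version of the paper's, which writes $q_3\ge\frac12(r-q_1)$ where $q_2$ is meant.
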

\begin{proof}
    From Lemmas \ref{lem_q1c1res} and \ref{lem_q2c2res}, $n_1$ is either $2$ or $3$ and $n_2$ is one of $2,3,4$ and that $|c_i|\leq n_i$. It is left to show that if $n_1=3$, then $n_2=3$.

    Assume $n_1=3$. Then $q_1=\frac{r}{3}-\frac{c_1}{3}$ with $-3\leq c_1\leq 0$.
    If $n_2=2$, then $q_2=\frac{r}{2}+\frac{c_2}{3}$ with $|c_2|\leq 2$. Since $q_1\geq q_2$, then
    $$\frac{r}{3}+1\geq q_1\geq q_2\geq \frac{r}{2}-1$$
    which cannot happen once $r>12$. 

    If $n_2=4$, then $q_2\leq \frac{r}{4}+1$. By the order assumption, $q_3\geq \frac{1}{2}(r-q_1)\geq \frac{r}{3}-1$. But then
    $$\frac{r}{4}+1\geq \frac{r}{3}-1$$
    which cannot happen once $r>24$. Hence if $n_1=3$, then $n_2=3$.
\end{proof}

\section{$(n_1,n_2)$ is $(2,2), (3,3), (2,4),(2,3)$}\label{sec_5}

Recall that $n_i,c_i$ are defined by $r=n_iq_i+c_i$ with $\frac{-q_i}{2}< c_i\leq \frac{q_i}{2}$. In Prop \ref{prop_nnpair}, we reduced the non-overlapping configurations to those with constraints $(n_1,n_2)=(3,3)$ or $(2,4)$ or $(2,3)$ and $|c_i|\leq n_i, i=1,2$. In this section, we work through these possibilities. We will find our minimal cases when $(n_1,n_2)=(2,3)$, $c_1=0$, $c_2=0$. Let's describe our general approach.

\begin{steps}
    \item Knowing $(n_1,c_1)$ and $(n_2,c_2)$ determines $q_1,q_2, q_3$ in terms of $r$. Use Prop \ref{prop_quadmincoldim} to get the minimal (possibly negative or zero) dimension of some or all of the $\Delta_i$ to get a lower bound on the dimension. If this is positive and sufficiently big for $r\geq 48$, then we stop and compare it to $\frac{r}{3}-12$.
    \item Let $\Delta_1^0, \Delta_2^0, \Delta_3^0$ denote the minimal column dimensions for the first, second, and third columns, respectively. Recall from before Lemma \ref{lem_q1c1res} that this is
    $$\Delta_i^0=\begin{cases}
        |c_i|(q_i-|c_i|)-q_i+|c_i| & \text{if }n_i \text{ even}\\
        |c_i|(q_i-|c_i|)-|c_i| & \text{if }n_i \text{ odd}.
    \end{cases}$$
    We then go column by column, changing the column partition.
    \item Apply moves iteratively to the partition $(q_1,...,q_1,k_1)$. Given a partition, use Lemma \ref{lem_colqq_qqa} to see if $\Delta_1+\Delta_2^0+\Delta_3^0>0$. If it is, then we can compare this to $\frac{r}{3}-12$ and rule out all partitions less than that one in the dominance ordering by Prop \ref{prop_quadDelta_domorder}. If it is not, apply another move to the partition and repeat. We do this until we have a list of all possible column dimensions for $\Delta_1$ such that $\Delta_1+\Delta_2^0+\Delta_3^0\leq 0$. 
    \item Do Step 3 for the second and third column to collect all the possible column dimensions $\Delta_i$ such that $\Delta_1^0+\Delta_2+\Delta_3^0\leq 0$ and $\Delta_1^0+\Delta_2^0+\Delta_3\leq 0$ respectively. 
    \item We will then have a small list of column dimensions that have not been ruled out for the first, second, and third columns. We will always find that for all of the column dimensions on this list that $\Delta_1+\Delta_2+\Delta_3$ will be either $\leq 0$ or $\geq \frac{r}{3}-12$.
\end{steps}

\subsection{$(n_1,n_2)=(2,2)$}

When $(n_1,n_2)=(2,2)$, we have
$$r=2q_1+c_1, q_1=\frac{r}{2}-\frac{c_1}{2}, -2\leq c_1\leq 2$$
$$r=2q_2+c_2, q_2=\frac{r}{2}-\frac{c_2}{2}, -2\leq c_2\leq 2$$
and $q_3=\frac{c_1}{2}+\frac{c_2}{2}$. 

\subsubsection*{Case: $|c_1|=2$}

If $c_1=2$, then $r=2q_1+2, q_1=\frac{r}{2}-1$. Since $q_2\leq q_1$, we have $q_1=q_2$ and $q_3=2$. It follows by Prop \ref{prop_quadmincoldim} that
$$\Delta_i^0\geq q_i-2=\frac{r}{2}-3, i=1,2.$$
Then $\Delta\geq \Delta_1^0+\Delta_2^0-2= r-8$. If $c_1=-2$, then $q_1=\frac{r}{2}+1$ and so $q_2\leq \frac{r}{2}-2$ since $q_3\geq 1$. This contradicts $|c_2|\leq 2$.

\subsubsection*{Case: $|c_1|=1$}

If $c_1=1$, then $r=2q_1+1, q_1=\frac{r}{2}-\frac{1}{2}$. Since $q_2\leq q_1$ and $r$ is odd, this implies $q_1=q_2$ and $q_3=1$. By Prop \ref{prop_quadmincoldim}, the minimal column dimension of each column is $0$. We look at the next possible choices of configurations. For the third column, we always have $\Delta_3=0$. Since $q_1=q_2$, without loss of generality, suppose we change the first column. If we use $(q_1,q_1,1)$, then $\Delta_1\in \{0,r-3\}$. Any other partition is $(q_1,q_1-1,2)$ or lower. If we use $(q_1,q_1-1,2)$, by Lemma \ref{lem_colqq_qqa}, $\Delta_1\geq 2q_1-2=\frac{r}{2}-3$. Hence, the smallest nonzero dimension with $c_1=1$ is 
$$\Delta=\frac{r}{2}-3>\frac{r}{3}-12.$$
If $c_1=-1$, then $2q_1-1, q_1=\frac{r}{2}+\frac{1}{2}$. But then $q_2\leq r-q_1-1=\frac{r}{2}-\frac{3}{2}$ which contradicts $|c_2|\leq n_2=2$.

\subsubsection*{Case: $c_1=0$}

If $c_1=0$, then $r=2q_1$ and $q_1=\frac{r}{2}$. Since $q_2\leq r-q_1-q_2\leq \frac{r}{2}-1$, we must have $c_2=2$ and $q_3=1$. By Prop \ref{prop_quadmincoldim}, the minimal dimensions for $\Delta_i$ are $\Delta_1^0=\frac{-r}{2}, \Delta_2^0=\frac{r}{2}-3$, and $\Delta_3^0=-1$. Taking these gives $\Delta=-4$. Note that $\Delta_3$ can only be $1$ or $-1$, so unless we change the other columns, the dimension would be negative. If $\Delta_1\neq \frac{-r}{2}$, then $\Delta_1\geq 0$  and $\Delta\geq \frac{r}{2}-4$. Hence, we can assume $\Delta_1=\frac{-r}{2}$. 

Let's look at changing the second column. Below, we record all dimensions that certain partitions can achieve using Lemma \ref{lem_colqq_qqa}.
\begin{enumerate}
    \item $(q_2,q_2,2)$, $\Delta_2\in \left\{\frac{r}{2}-3, \frac{3r}{2}-9\right\}$
    \item $(q_2,q_2,1,1)$, $\Delta_2\in \left\{\frac{r}{2}-3, \frac{r}{2}-1,\frac{3r}{2}-7,\frac{3r}{2}-5\right\}$
    \item $(q_2,q_2-1,3)$, $\Delta_2\in \left\{\frac{3r}{2}-9, \frac{5r}{2}-19 \right\}$
\end{enumerate}

Observe that any other possible column partition of $r$ of width $q_2$ are less in the dominance ordering than (iii) $(q_2,q_2-1,3)$ and hence by Prop \ref{prop_quadDelta_domorder}, satisfy $\Delta\geq \mathrm{min}\left\{\frac{5r}{2}-19, \frac{3r}{2}-9\right\}$. For $r\geq 11$, $\frac{5r}{2}-19>\frac{3r}{2}-9$. It follows that if $\Delta_2$ is not $\frac{r}{2}-3$ or $\frac{r}{2}-1$, then
$$\Delta\geq \frac{-r}{2}+\left(\frac{3r}{2}-9\right)-1=r-10.$$
If we use $\Delta_2=\frac{r}{2}-1$, then
$$\Delta\leq \frac{-r}{2}+\left(\frac{r}{2}-1\right)+1=0.$$
Thus any positive dimension with $c_1=0$ satisfies $\Delta\geq r-10>\frac{r}{3}-12$. 

\subsection{$(n_1,n_2)=(3,3)$}

When $(n_1,n_2)=(3,3)$, we have
$$r=3q_1+c_1, q_1=\frac{r}{3}-\frac{c_1}{3}, -3\leq c_1\leq 0,$$
$$r=3q_2+c_2, q_2=\frac{r}{3}-\frac{c_2}{3}, -3\leq c_2\leq 3,$$
and $q_3=\frac{r}{3}+\frac{c_1+c_2}{3}$. Note that $\frac{r}{3}+1\geq q_i\geq \frac{r}{3}-1$ for $i=1,2$ and $\frac{r}{3}+2\geq q_3\geq \frac{r}{3}-2$. By Lemma \ref{lem_negcol}, once $r\geq 25$, $\Delta_i\geq 0$ for each $i$ so $\Delta\geq \Delta_i$.

\subsubsection*{Case: $c_1=-3, -2,-1$}

If $c_1=-3$, then $q_1=\frac{r}{3}+1$ and $\Delta\geq \Delta_1^0\geq r-9.$ 

\noindent If $c_1=-2$, then $q_1=\frac{r}{3}+\frac{2}{3}$ and $\Delta\geq \Delta_1^0\geq \frac{2r}{3}-\frac{14}{3}.$ 

\noindent If $c_1=-1$, then $q_1=\frac{r}{3}+\frac{1}{3}$ and $\Delta\geq \Delta_1^0\geq \frac{r}{3}-\frac{5}{3}.$

\subsubsection*{Case: $c_1=0$}

When $c_1=0$, then $q_1=q_2=q_3=\frac{r}{3}$. The minimal dimension is $0$. We then look at changing one of the columns. Without loss of generality, suppose we change the first column. If we use the partition $(q_1,q_1,q_1)$, $\Delta_1=0$. If we use $(q_1,q_1,q_1-1,1)$, $\Delta_1\in \left\{\frac{2r}{3}-2,\frac{2r}{3}-4,\frac{2r}{3}\right\}$. Any other partition is less than $(q_1,q_1,q_1-1,1)$ and so
$$\Delta\geq \frac{2r}{3}-4.$$

\subsection{$(n_1,n_2)=(2,4)$}

When $(n_1,n_2)=(2,4)$,
$$r=2q_1+c_1, q_1=\frac{r}{2}-\frac{c_1}{2}, |c_1|\leq 2,$$
$$r=4q_2+c_2, q_2=\frac{r}{4}-\frac{c_2}{4}, |c_2|\leq 4$$
and $q_3=\frac{r}{4}+\frac{2c_1+c_2}{4}$. Note that since $q_1\geq \frac{r}{2}-1$, and for $i=2,3$, $\frac{r}{4}+2\geq q_i\geq \frac{r}{4}-2$. By Lemma \ref{lem_negcol} once $r\geq 25$, $\Delta_1<0$ only if $q_1=\frac{r}{2}$, and for $i=2,3$ $\Delta_i<0$ only if $q_i=\frac{r}{4}$. 

\subsubsection*{Case: $|c_1|=2$}

When $|c_1|=2$, $r=2q_1\pm 2$ and $q_1=\frac{r}{2}\mp 1$. Then
$$\Delta_1^0\geq \frac{r}{2}\mp 1-2.$$
If $\Delta_2+\Delta_3\geq 0$, we are done. Suppose one of $q_2,q_3$ are $\frac{r}{4}$, then the other is $\frac{r}{4}\pm 1$. It follows that
$$\Delta_2^0+\Delta_3^0\geq \frac{-r}{4}+\left(\frac{3r}{4}-15\right)=\frac{r}{2}-15$$
which is strictly greater than zero once $r\geq 31$. 

\subsubsection*{Case: $|c_1|=1$}

The minimal first column dimension is $\Delta_1^0=0$. Since $r$ is odd, $\Delta_i\geq 0$ for each $i$ and $c_2\in \{-3,-1,1,3\}$. 

\noindent\textbf{Subcase:} $|c_2|=3$. If $|c_2|=3$, $r=4q_3\pm 3$ and $q_3=\frac{r}{4}\mp \frac{3}{4}$. Then
$$\Delta\geq \Delta_2^0\geq \frac{3r}{4}-\frac{57}{6}$$
which once $r\geq 6$ is greater than $\frac{r}{3}-12$. 

\noindent \textbf{Subcase:} $c_2=1$. If $c_2=1$, then $r=4q_2+1$ and $q_2=\frac{r}{4}-\frac{1}{4}$. Since
$$q_3=\frac{r}{4}+\frac{2c_1+1}{4}\leq q_2,$$
we must have $c_1=-1$ and $q_2=q_3$. The minimal second and third column dimensions are $\Delta_2^0=\Delta_3^0=0$. 

Let's first look at changing the first column. Consider the following possible column partitions listed with all their possible dimensions.
\begin{enumerate}
    \item $(q_1,q_1-1)$, $\Delta_1\in \{0,r-1\}$
    \item $(q_1-1,q_1-1,1)$, $\Delta_1\in \{r+5, r+7, 2r+2,2r+4\}$
    \item $(q_1,q_1-2,1)$, $\Delta_1\in \{r-1, 2r-6\}$
\end{enumerate}
Any other partition is less than (iii) $(q_1,q_1-2,1)$ and hence satisfies $\Delta_1\geq r-1$ once $r\geq 6$. It follows that unless we have $\Delta_1=0$, $\Delta\geq r-1$. Now we can assume $\Delta_1=0$.

Now let's look at changing the second column. Consider the following partitions and their possible dimensions.
\begin{enumerate}
    \item $(q_2,q_2,q_2,q_2,1)$, $\Delta_2\in \left\{0,\frac{r}{2}-\frac{5}{2}\right\}$
    \item $(q_2,q_2,q_2,q_2-1,2)$, $\Delta_2\in \left\{\frac{r}{2}-\frac{1}{2}, r-7\right\}$
\end{enumerate}

Any other partition with all parts less than or equal to $q_2$ will be less than (ii) $(q_2,q_2,q_2,q_2-1,2)$ and hence satisfy $\Delta_2\geq \frac{r}{2}-\frac{1}{2}$ once $r\geq 14$. It follows that unless we take $\Delta_2=0$, then $\Delta\geq \frac{r}{2}-\frac{1}{2}$. 

Since $q_2=q_3$, we are left with only the possibility of $\Delta_1=\Delta_2=\Delta_3=0$, but this, of course, gives zero dimension. 

\noindent \textbf{Subcase:} $c_2=-1$. When $c_2=-1$, $r=4q_1-1$, $q_2=\frac{r}{4}+\frac{1}{4}$, and $q_3=\frac{r}{4}+\frac{2c_1-1}{4}$. 

First suppose $c_1=-1$, then $q_3=\frac{r}{4}-\frac{3}{4}$ and $r=4q_3+3$. Once $r\geq 19$, $q_3\geq 4$ and
$$\Delta\geq \Delta_3^0\geq \frac{r}{2}-\frac{15}{2}.$$
Next, consider $c_1=1$. This gives $q_2=q_3$. The minimal column dimensions are $\Delta_1^0=\Delta_2^0=\Delta_3^0=0$. Let's look at changing the first column. See that,
\begin{enumerate}
    \item $(q_1,q_1-1)$, $\Delta_1\in \{0,r-3\}$
    \item $(q_2,q_2-1,1)$, $\Delta_1\in \left\{\frac{r}{2}-\frac{1}{3}, \frac{3r}{2}-\frac{11}{2}\right\}$.
\end{enumerate}

Any other partition is less than (ii)$(q_2,q_2-1,1)$ and so satisfies $\Delta_2\geq \frac{r}{2}-\frac{1}{3}$ once $r\geq 6$. It follows that unless $\Delta_1=0$, $\Delta\geq \frac{r}{2}-\frac{1}{3}.$ We can now assume $\Delta_1=0$. 

Let's look at changing the second column.
\begin{enumerate}
    \item $(q_2,q_2,q_2,q_2-1)$, $\Delta_1\in \left\{0,\frac{r}{2}-\frac{3}{2}\right\}$
    \item $(q_2,q_2,q_2-1,q_2-1,1)$, $\Delta_1\in \left\{\frac{r}{2}-\frac{3}{2}, \frac{r}{2}+\frac{1}{2}, r-5,r-3\right\}$
    \item $(q_2,q_2,q_2,q_2-2,1)$, $\Delta_2\in \left\{\frac{r}{2}-\frac{3}{2}, r-7\right\}$.
\end{enumerate}

Any other partition is less than one of (ii) $(q_2,q_2,q_2-1,q_2-1,1)$ or (iii) $(q_2,q_2,q_2,q_2-2,1)$ and hence satisfies $\Delta_2\geq \frac{r}{2}-\frac{3}{2}$ once $r\geq 12$. In particular, unless $\Delta_2=0$, $\Delta\geq \frac{r}{2}-\frac{3}{2}$. Hence, we can assume $\Delta_2=0$. However, $q_2=q_3$, so the same applies to the third column and $\Delta_3=0$. 

\subsubsection*{Case: $c_1=0$}

When $c_1=0$, $r=2q_1$ and $q_1=\frac{r}{2}$. The minimal dimension of the first column is $\Delta_1^0=\frac{-r}{2}$. Note that $q_2\geq \frac{r}{4}$ and $r$ is even, so $c_2\in \{-4,-2,0\}$. Once $r\geq 36$, $q_3\geq 8$ and $c_3=-c_2$.

\noindent \textbf{Subcase:} $c_2=-4$. In this subcase,
$$r=4q_2-4,q_2=\frac{r}{4}+1,$$
$$r=4q_3+4,q_3=\frac{r}{4}-1.$$
Then
$$\Delta_2+\Delta_3\geq \left(\frac{3r}{4}-9\right)+\left(\frac{3r}{4}-15\right)=\frac{3r}{2}-24.$$
It follows that $\Delta\geq r-24$ which is greater than $\frac{r}{3}-12$ once $r\geq 19$. 

\noindent \textbf{Subcase:} $c_2=-2$. In this subcase,
$$r=4q_2-4, q_2=\frac{r}{4}+\frac{1}{2},$$
$$r=4q_2+2, q_3=\frac{r}{4}-\frac{1}{2}.$$
The minimal column dimensions are $\Delta_2^0=\frac{r}{4}-\frac{3}{2}$ and $\Delta_3^0=\frac{r}{4}-\frac{5}{2}$. The minimal dimension is then $\Delta=-4$. If $\Delta_1\neq \frac{-r}{2}$, then $\Delta_1\geq 0$ and $\Delta\geq \frac{r}{2}-4$. We can now assume $\Delta_1=\frac{-r}{2}$. 

Let's look at changing the second column. 
\begin{enumerate}
    \item $(q_2,q_2,q_2,q_2-2)$, $\Delta_2\in \left\{r-4,\frac{r}{4}-\frac{3}{2}\right\}$
    \item $(q_2,q_2,q_2-1,q_2-1)$, $\Delta_2\in \left\{\frac{3r}{4}-\frac{3}{2}, \frac{r}{4}-\frac{3}{2}, \frac{3r}{4}-\frac{5}{2}, \frac{r}{4}+\frac{1}{2}\right\}$
    \item $(q_2,q_2,q_2-1,q_2-2,1)$, $\Delta_2\in \left\{\frac{5r}{4}-\frac{11}{2}, \frac{3r}{4}-\frac{5}{2}, \frac{5r}{4}-\frac{15}{2}, \frac{3r}{4}-\frac{1}{2}\right\}$
    \item $(q_2,q_2,q_2,q_2-3,1)$, $\Delta_2\in \left\{\frac{5r}{4}-\frac{23}{2}, \frac{3r}{4}-\frac{9}{2}\right\}$
\end{enumerate}

Any other partition will be less than (iii) $(q_2,q_2,q_2-1,q_2-2,1)$ or (iv) $(q_2,q_2,q_2,q_2-3,1)$ and hence satisfies $\Delta_2\geq \frac{3r}{4}-\frac{9}{2}$ once $r\geq 15$. Unless we have $\Delta_2\in \left\{\frac{r}{4}-\frac{3}{2}, \frac{r}{4}+\frac{1}{2}\right\}$,
$$\Delta\geq \frac{-r}{2}+\left(\frac{3r}{4}-\frac{9}{2}\right)+\left(\frac{r}{4}-\frac{5}{2}\right)=\frac{r}{2}-7.$$
Thus we can assume $\Delta_2\in \left\{\frac{r}{4}-\frac{3}{2}, \frac{r}{4}+\frac{1}{2}\right\}$.

Let's look at changing the third column.
\begin{enumerate}
    \item $(q_3,q_3,q_3,q_3,2)$, $\Delta_3\in \left\{\frac{r}{4}-\frac{5}{2}, \frac{3r}{4}-\frac{15}{2}\right\}$
    \item $(q_3,q_3,q_3,q_3,1,1)$, $\Delta_3\in \left\{\frac{r}{4}-\frac{5}{2}, \frac{r}{4}-\frac{1}{2}, \frac{3r}{4}-\frac{11}{2}, \frac{3r}{4}-\frac{7}{2}\right\}$
    \item $(q_3,q_3,q_3,q_3-1,3)$, $\Delta_3\in \left\{\frac{3r}{4}-\frac{15}{2}, \frac{5r}{4}-\frac{33}{2}\right\}$
\end{enumerate}
Observe that any other partition will be less than $(q_3,q_3,q_3,q_3-1,3)$ and hence satisfies $\Delta_3\geq \frac{3r}{4}-\frac{15}{2}$ once $r\geq 19$. If $\Delta_3\notin \left\{\frac{r}{4}-\frac{5}{2}, \frac{r}{4}-\frac{1}{2}\right\}$, then
$$\Delta\geq \frac{-r}{2}+\left(\frac{r}{4}-\frac{3}{2}\right)+\left(\frac{3r}{4}-\frac{15}{2}\right)=\frac{r}{2}-9.$$
We can then assume $\Delta_3\in \left\{\frac{r}{4}-\frac{5}{2}, \frac{r}{4}-\frac{1}{2}\right\}$. We have reduced the column dimensions to
$$\Delta_1=\frac{-r}{2}, \Delta_2\in \left\{\frac{r}{4}-\frac{3}{2}, \frac{r}{4}+\frac{1}{2}\right\},\Delta_3\in \left\{\frac{r}{4}-\frac{5}{2}, \frac{r}{4}-\frac{1}{2}\right\}.$$
However, observe that taking any combination of these $\Delta_1+\Delta_2+\Delta_3$ will never be positive. 

\noindent \textbf{Subcase:} $c_2=0$. In this subcase, $q_1=\frac{r}{2}, q_2=q_3=\frac{r}{4}$ and the minimal column dimensions are $\Delta_1^0=\frac{-r}{2}$ and $\Delta_2^0=\Delta_3^0=\frac{-r}{4}$. Let's first consider changing the first column.
\begin{enumerate}
    \item $(q_1,q_1)$, $\Delta_1\in \left\{\frac{-r}{2}, \frac{r}{2}\right\}$
    \item $(q_1,q_1-1,1)$, $\Delta_1\in \left\{\frac{3r}{2}-4, \frac{5r}{2}-12\right\}$
    \item $(q_1-1,q_1-1,2)$, $\Delta_1\in \left\{\frac{3r}{2}-4,\frac{3r}{2}-2,\frac{5r}{2}-10,\frac{5r}{2}-8\right\}$
\end{enumerate}
Notice that any other partition will be less than (ii) $(q_1,q_1-1,1)$ or (iii) $(q_1-1,q_1-1,2)$ and hence satisfies $\Delta_1\geq \frac{3r}{2}-4$ once $r\geq 9$. Unless $\Delta_1\in \left\{\frac{r}{2}, \frac{-r}{2}\right\}$, 
$$\Delta\geq \frac{3r}{2}-4-\frac{r}{2}=r-4.$$
Now we assume $\Delta_1\in \left\{\frac{-r}{2}, \frac{r}{2}\right\}$. Let's consider changing the second column. If we use $(q_2,q_2,q_2,q_2)$, then $\Delta_2\in \left\{\frac{-r}{4},\frac{r}{4}\right\}$. Only one partition is achieved by applying one move to the Young diagram. It is $(q_2,q_2,q_2,q_2-1,1)$ which has possible dimensions $\Delta_2\in \left\{\frac{r}{4}, \frac{3r}{4}-6\right\}$. Next, consider partitions achieved by applying two moves to the Young diagram,
\begin{enumerate}
    \item $(q_2,q_2,q_2-1,q_2-1,2)$, $\Delta_2\in \left\{\frac{3r}{4}-4, \frac{3r}{4}-2, \frac{5r}{4}-10,\frac{5r}{4}-8\right\}$
    \item $(q_2,q_2,q_2-1,q_2-1,1,1)$, $\Delta_2\in \left\{\frac{3r}{4}-4, \frac{3r}{4}-2, \frac{3r}{4}, \frac{5r}{4}-8, \frac{5r}{4}-6, \frac{5r}{4}-4\right\}$
    \item $(q_2,q_2,q_2,q_2-2,2)$, $\Delta_2\in \left\{\frac{3r}{4}-4, \frac{5r}{4}-12\right\}$
    \item $(q_2,q_2,q_2,q_2-2,1,1)$, $\Delta_2\in \left\{\frac{r}{2}-4,\frac{r}{2}-2, \frac{5r}{4}-10,\frac{5r}{4}-8\right\}$
\end{enumerate}

Any other partition is lower than one of the following two partitions:
\begin{enumerate}
    \item $(q_2,q_2-1,q_2-1,q_2-1,3)$, $\Delta_2\in \left\{\frac{5r}{4}-6, \frac{7r}{4}-14\right\}$
    \item $(q_2,q_2,q_2,q_2-3,3)$, $\Delta_2\in \left\{\frac{5r}{4}-12, \frac{7r}{4}-24\right\}$
\end{enumerate}

Any other partition is less than one of these two and so satisfies $\Delta_2\geq \frac{5r}{4}-12$ once $r\geq 25$. If $\Delta_2\notin \left\{\frac{-r}{4}, \frac{r}{4},\frac{r}{2}-4,\frac{r}{2}-2, \frac{r}{2},\frac{r}{2}+2, \frac{3r}{4}-6,\frac{3r}{4}-4,\frac{3r}{4}-2, \frac{3r}{4}\right\}$, then
$$\Delta\geq \frac{-r}{2}+\left(\frac{5r}{4}-12\right)+\frac{-r}{4}=\frac{r}{2}-12.$$
Hence, we can assume $\Delta_2$ is one of those values. Since $q_2=q_3$, this also holds for the third column. We have then reduced the possible column dimensions to
$$\Delta_1\in \left\{\frac{-r}{2}, \frac{r}{2}\right\},$$
$$\Delta_i\in \left\{\frac{-r}{4}, \frac{r}{4},\frac{r}{2}-4,\frac{r}{2}-2, \frac{r}{2},\frac{r}{2}+2, \frac{3r}{4}-6,\frac{3r}{4}-4,\frac{3r}{4}-2, \frac{3r}{4}\right\}.$$
Note that if $\Delta_1=\frac{r}{2}$ and at least one of $\Delta_i$, $i=2,3$ is not $\frac{-r}{4}$, then once $r\geq 17$,
$$\Delta\geq \frac{r}{2}+\frac{r}{4}-\frac{r}{4}=\frac{r}{2}.$$
Hence, we can assume $\Delta_1=\frac{-r}{2}$. If one of $\Delta_i$ is $\frac{-r}{4}$, then for $r\geq 3$,
$$\Delta\leq \frac{-r}{2}+\frac{-r}{4}+\frac{3r}{4}=0.$$
Hence, we can assume neither $\Delta_i$ for $i=2,3$ are $\frac{-r}{4}$. To get nonzero dimension, we need at least one of the $\Delta_i$ to not equal $\frac{r}{4}$ and so
$$\Delta\geq \frac{-r}{2}+\frac{r}{4}+\frac{3r}{4}-6=\frac{r}{2}-6.$$
Thus, whatever choices we make, if the dimension is positive, then $\Delta>\frac{r}{3}-12$.

\subsection{$(n_2,n_3)=(2,3)$}

Now we assume $(n_1,n_2)=(2,3)$. This means that we have 
$$r=2q_1+c_1, q_1=\frac{r}{2}-\frac{c_1}{2}, |c_1|\leq 2$$
$$r=3q_2+c_2, q_2=\frac{r}{3}-\frac{c_2}{3}, |c_2|\leq 4$$
and $q_3=\frac{r}{6}+\frac{3c_1+2c_2}{6}$. Note that once $r\geq 29$, we can assume $q_1>\frac{r}{4}$, $q_2>\frac{r}{4}$, and $\frac{r}{4}>q_3$. Then $\Delta_1<0$ only if $q_1=\frac{r}{2}$, $\Delta_2\geq 0$, and $\Delta_3\geq \frac{-r}{6}$.

\subsubsection*{Case: $|c_1|=2$}

In this case, $r=2q_1\pm 2$ and $q_1=\frac{r}{2}\mp 1$. Then
$$\Delta\geq \Delta_1^0+0-\frac
{r}{6}\geq \frac{r}{3}-3.$$

\subsubsection*{Case: $|c_1|=1$}
In this case, the minimal first column dimension is $\Delta_1^0=0$. Since $r$ is odd, $\Delta_i\geq 0$ for all $i$.

\noindent \textbf{Subcase:} $|c_2|=3$. In this subcase, $r=3q_2\pm 3$ and $q_2=\frac{r}{3}\mp 1$. Then
$$\Delta\geq \Delta_2^0= r-15$$
which is bigger than $\frac{r}{3}-12$ once $r\geq 5$. 

\noindent \textbf{Subcase:} $|c_2|=2$. In this subcase, $r=3q_2\pm 2$ and $q_2=\frac{r}{3}\mp \frac{2}{3}$. Then
$$\Delta\geq \Delta_2^0\geq \frac{2r}{3}-\frac{22}{2}.$$

\noindent \textbf{Subcase:} $|c_2|=1$. In this subcase, $r=3q_2\pm 1$ and $q_2=\frac{r}{3}\mp \frac{1}{3}$. Then
$$\Delta\geq \Delta_2^0\geq \frac{r}{3}-\frac{7}{3}.$$

\noindent \textbf{Subcase:} $c_2=0$. In this subcase, $r=3q_2$ and the minimal second column dimension is $\Delta_2=0$. If $c_1=1$, then $q_3=\frac{r}{6}-\frac{1}{2}$ and $r=6q_3+3$. If $c_1=-1$, then $q_3=\frac{r}{6}+\frac{1}{2}$ and $r=6q_3-3$. Once $r\geq 21$, $q_3\geq 4$ and we have $k_3$ is either $3$ and $q_3-3$ respectively. Then
$$\Delta\geq \Delta_3^0=\frac{r}{3}-7.$$

\subsubsection*{Case: $c_1=0$}

In this case, $q_1=\frac{r}{2}$, $r=2q_1$, and the minimal first column dimension is $\Delta_1=\frac{-r}{2}$. Note that $q_3=\frac{r}{6}$ only if $c_2=0$. 

\noindent \textbf{Subcase:} $|c_2|=3$. In this subcase, observe that $q_3\geq\frac{r}{6}-1$ which is greater than $\frac{r}{8}$ once $r\geq 25$ so $\Delta_3>0$. 
$$\Delta\geq \frac{-r}{2}+\Delta_2^0\geq \frac{-r}{2}+r-15=\frac{r}{2}-15$$
which is greater than $\frac{r}{3}-12$ once $r>19$. 

\noindent \textbf{Subcase:} $c_2=2$. In this subcase, $r=3q_2+2$, $q_2=\frac{r}{3}-\frac{2}{3}$ and $r=6q_3-4, q_3=\frac{r}{6}+\frac{2}{3}$. Once $r\geq 26$, $q_3\geq 4$ and $k_3$ is $q_3-4$. Then
$$\Delta\geq \frac{-r}{2}+\left(\frac{2r}{3}-\frac{22}{3}\right)+\left(\frac{r}{2}-10\right)=\frac{2r}{3}-\frac{52}{3}$$
which is greater than $\frac{r}{3}-12$ once $r\geq 17$. 

\noindent \textbf{Subcase:} $c_2=-2$. In this subcase, $r=3q_2-2, q_2=\frac{r}{3}+\frac{2}{3}$ and $r=6q_3+4, q_3=\frac{r}{6}-\frac{2}{3}$. Once $r\geq 34$, $q_3\geq 5$ and $k_3=4$. Then
$$\Delta\geq \frac{-r}{2}+\left(\frac{2r}{3}-\frac{14}{3}\right)+\left(\frac{r}{2}-14\right)=\frac{2r}{3}-\frac{56}{3}$$
which is greater than $\frac{r}{3}-12$ once $r\geq 21$. 

\noindent \textbf{Subcase:} $c_2=1$. In this subcase, $r=3q_2+1, q_2=\frac{r}{3}-\frac{1}{3}$ and $r=6q_3-2, q_3=\frac{r}{6}+\frac{1}{3}$. The minimal  dimensions are
$$\Delta_1^0=\frac{-r}{2}, \Delta_2^0=\frac{r}{3}-\frac{7}{3}, \Delta_3^0=\frac{r}{6}-\frac{5}{3}$$
and $\Delta^0=-4$. If $\Delta_1\neq \frac{-r}{2}$, then $\Delta_1\geq 0$ and $\Delta\geq \frac{r}{2}-4$. Hence, we assume $\Delta_1=\frac{-r}{2}$.

Let's look at changing the second column.
\begin{enumerate}
    \item $(q_2,q_2,q_2,1)$, $\Delta_2\in \left\{\frac{r}{3}-\frac{7}{3}, \frac{r}{3}-\frac{1}{3}\right\}$
    \item $(q_2,q_2,q_2-1,2)$, $\Delta_2\in \left\{r-9,r-7,r-5,r-3\right\}$
\end{enumerate}
Any other partition is less than (ii) $(q_2,q_2,q_2-1,2)$ and hence satisfies $\Delta_2\geq r-9$. It follows that if $\Delta_2\notin\left\{\frac{r}{3}-\frac{7}{3}, \frac{r}{3}-\frac{1}{3}\right\}$, then
$$\Delta\geq \frac{-r}{2}+\left(r-9\right)+\left(\frac{r}{6}-\frac{5}{3}\right)=\frac{2r}{3}-\frac{32}{3}.$$
We can then assume $\Delta_2\in\left\{\frac{r}{3}-\frac{7}{3}, \frac{r}{3}-\frac{1}{3}\right\}$. 

Let's look at changing the third column. 
\begin{enumerate}
    \item $(q_3,q_3,q_3,q_3,q_3,q_3-2)$, $\Delta_3\in \left\{\frac{r}{6}-\frac{5}{3}, \frac{r}{2}-7\right\}$
    \item $(q_3,q_3,q_3,q_3,q_3-1,q_3-1)$, $\Delta_3\in \left\{\frac{r}{6}-\frac{5}{3}, \frac{r}{6}+\frac{1}{3}, \frac{r}{2}-3, \frac{r}{2}-1\right\}$
    \item $(q_3,q_3,q_3,q_3,q_3-1,q_3-2,1)$, $\Delta_3\in \left\{\frac{r}{2}-3, \frac{r}{2}-1, \frac{5r}{6}-\frac{25}{3},\frac{5r}{6}-\frac{19}{3} \right\}$
    \item $(q_3,q_3,q_3,q_3,q_3,q_3-3,1)$, $\Delta_3\in \left\{\frac{r}{2}-5, \frac{5r}{6}-\frac{37}{3}\right\}$
    \item $(q_3,q_3,q_3,q_3-1,q_3-1,q_3-1,1)$, $\Delta_3\in \left\{\frac{r}{2}-1, \frac{5r}{6}-\frac{13}{3}\right\}$
\end{enumerate}

Any other partition is less than (iii) $(q_3,q_3,q_3,q_3,q_3-1,q_3-1)$, (iv) $(q_3,q_3,q_3,q_3,q_3,q_3-3,1)$, or (v) $(q_3,q_3,q_3,q_3-1,q_3-1,q_3-1,1)$ and hence satisfies $\Delta_3\geq \frac{r}{2}-7$ once $r\geq 17$. If $\Delta_3\notin \left\{\frac{r}{6}-\frac{5}{3}, \frac{r}{6}+\frac{1}{3}\right\}$, then
$$\Delta\geq \frac{-r}{2}+\left(\frac{r}{3}-\frac{7}{3}\right)+\left(\frac{r}{2}-7\right)=\frac{r}{3}-\frac{28}{3}.$$
We have now reduced our possible column dimensions to
$$\Delta_1=\frac{-r}{2}, \Delta_2\in\left\{\frac{r}{3}-\frac{7}{3}, \frac{r}{3}-\frac{1}{3}\right\}, \Delta_3\in \left\{\frac{r}{6}-\frac{5}{3}, \frac{r}{6}+\frac{1}{3}\right\}.$$
No matter how we combine these options, $\Delta\leq 0$.

\noindent \textbf{Subcase:} $c_2=-1$. In this subcase, $r=3q_2-1, q_2=\frac{r}{3}+\frac{1}{3}$ and $r=6q_3+2, q_3=\frac{r}{6}-\frac{1}{3}$. The minimal column dimensions are
$$\Delta_1^0=\frac{-r}{2}, \Delta_2^0=\frac{r}{3}-\frac{5}{3}, \Delta_3^0=\frac{r}{6}-\frac{7}{3}.$$
The minimal dimension is then $\Delta^0=-4$. If $\Delta_1\neq \frac{-r}{2}$, then $\Delta_1\geq 0$ and $\Delta\geq \frac{r}{2}-4$. Hence, we assume $\Delta_1=\frac{-r}{2}$. 

Let's look at changing the second column.
\begin{enumerate}
    \item $(q_2,q_2,q_2-1)$, $\Delta_2\in \left\{\frac{r}{3}-\frac{5}{3}, \frac{r}{3}+\frac{1}{3}\right\}$
    \item $(q_2,q_2-1,q_2-1,1)$, $\Delta_2\in \left\{r+1,r+3\right\}$
    \item $(q_2,q_2,q_2-2,1)$, $\Delta_2\in \left\{r-8,r-6,r-4,r-2\right\}$
\end{enumerate}
Any other partition is less than (ii) $(q_2,q_2-1,q_2-1,1)$ or (iii) $(q_2,q_2,q_2-2,1)$ and hence satisfies $\Delta_2\geq r-8$. It follows that if $\Delta_2\notin\left\{\frac{r}{3}-\frac{7}{3}, \frac{r}{3}+\frac{1}{3}\right\}$, then
$$\Delta\geq \frac{-r}{2}+(r-8)+\left(\frac{r}{6}-\frac{7}{3}\right)=\frac{2r}{3}-\frac{31}{3}.$$
Now we can assume $\Delta_2\in\left\{\frac{r}{3}-\frac{7}{3}, \frac{r}{3}+\frac{1}{3}\right\}$. 

Let's now consider changing the third column. 
\begin{enumerate}
    \item $(q_3,q_3,q_3,q_3,q_3,q_3,2)$, $\Delta_3\in \left\{\frac{r}{6}-\frac{7}{3}, \frac{r}{2}-7\right\}$
    \item $(q_3,q_3,q_3,q_3,q_3,q_3,1,1)$, $\Delta_3\in \left\{\frac{r}{6}-\frac{7}{3}, \frac{r}{6}-\frac{1}{3}, \frac{r}{2}-5,\frac{r}{2}-3\right\}$
    \item $(q_3,q_3,q_3,q_3,q_3,q_3-1,3)$, $\Delta_3\in \left\{\frac{r}{2}-7, \frac{5r}{6}-\frac{47}{3}\right\}$
\end{enumerate}
Any other partition is less than $(q_3,q_3,q_3,q_3,q_3,q_3-1,3)$ and hence satisfies $\Delta_3\geq \frac{r}{2}-7$ once $r\geq 27$. If $\Delta_3\notin \left\{\frac{r}{6}-\frac{7}{3}, \frac{r}{6}-\frac{1}{3}\right\}$, then 
$$\Delta\geq \frac{-r}{2}+\left(\frac{r}{3}-\frac{7}{3}\right)+\left(\frac{r}{2}-7\right)=\frac{r}{3}-\frac{28}{3}.$$
Now we can assume $\Delta_3\in \left\{\frac{r}{6}-\frac{7}{3}, \frac{r}{6}-\frac{1}{3}\right\}$. We have now reduced our choices of column dimensions to
$$\Delta_1=\frac{-r}{2}, \Delta_2\in\left\{\frac{r}{3}-\frac{7}{3}, \frac{r}{3}+\frac{1}{3}\right\}, \Delta_3\in \left\{\frac{r}{6}-\frac{7}{3}, \frac{r}{6}-\frac{1}{3}\right\}.$$
No matter how we combine these choices, we get $\Delta \leq 0$. 

\noindent \textbf{Subcase:} $c_2=0$. This is our last subcase and will be where we find our minimal cases (see Example \ref{ex_mincases}). In this subcase, $r=2q_1, r=3q_2$, and $r=6q_3$ with minimal column dimensions
$$\Delta_1^0=\frac{-r}{2}, \Delta_2^0=0, \Delta_3^0=\frac{-r}{6}$$
and $\Delta^0=\frac{-2r}{3}$. 

Let's look at changing the first column. 
\begin{enumerate}
    \item $(q_1,q_1)$, $\Delta_2\in \left\{\frac{-r}{2}, \frac{r}{2}\right\}$
    \item $(q_1,q_1-1,1)$, $\Delta_2\in \left\{\frac{r}{2}, \frac{3r}{2}-4\right\}$
\end{enumerate}
Any other partition is lower than (ii) $(q_1,q_1-1,1)$ and hence satisfies $\Delta_2\geq \frac{r}{2}$ once $r\geq 4$. Then unless $\Delta_1=\frac{-r}{2}$,
$$\Delta\geq \frac{r}{2}+0+\frac{-r}{6}=\frac{r}{3}.$$
Now we can assume $\Delta_1=\frac{-r}{2}$. Let's look at changing the second column.
\begin{enumerate}
    \item $(q_2,q_2,q_2)$, $\Delta_2=0$
    \item $(q_2, q_2,q_2-1,1)$, $\Delta_2\in \left\{\frac{2r}{3}-4,\frac{2r}{3}-2,\frac{2r}{3}\right\}$
    \item $(q_2,q_2-1,q_2-1,2)$, $\Delta_2\in \left\{\frac{4r}{3}-8,\frac{4r}{3}-4\right\}$
    \item $(q_2,q_2,q_2-2,2)$, $\Delta_2\in \left\{\frac{4r}{3}-12,\frac{4r}{3}-8, \frac{4r}{3}-4\right\}$
\end{enumerate}
Any other partition is less than either (iii) $(q_2,q_2-1,q_2-1,2)$ or (iv) $(q_2,q_2,q_2-2,2)$ and hence satisfies $\Delta_2\geq \frac{4r}{3}-12$. Unless $\Delta_2\in \left\{0,\frac{2r}{3}-4,\frac{2r}{3}-2,\frac{2r}{3}\right\}$,
$$\Delta\geq \frac{-r}{2}+\left(\frac{4r}{3}-12\right)+\frac{-r}{6}=\frac{2r}{3}-12.$$
Hence, we can now assume $\Delta_2\in \left\{0,\frac{2r}{3}-4,\frac{2r}{3}-2,\frac{2r}{3}\right\}$.

Let's look at changing the third column. We will find our minimal cases when $\Delta_3=\frac{5r}{6}-12$. We first look at partitions achieved from $(q_3,q_3,q_3,q_3,q_3,q_3)$ within two moves of the Young diagram.
\begin{enumerate}
    \item $(q_3,q_3,q_3,q_3,q_3,q_3)$, $\Delta_3\in \left\{\frac{r}{6}, \frac{-r}{6}\right\}$
    \item $(q_3,q_3,q_3,q_3,q_3,q_3-1,1)$, $\Delta_3\in \left\{\frac{r}{6},\frac{r}{2}-4\right\}$
    \item $(q_3,q_3,q_3,q_3,q_3-1,q_3-1,2)$, $\Delta_3\in \{\frac{r}{2}-4,\frac{r}{2}-2,\frac{5r}{6}-8,\frac{5r}{6}-10\}$
    \item $(q_3,q_3,q_3,q_3,q_3-1,q_3-1,1,1)$, $\Delta_3\in \left\{\frac{r}{2}, \frac{r}{2}-2,\frac{r}{2}-4,\frac{5r}{6}-4,\frac{5r}{6}-6,\frac{5r}{6}-8\right\}$
    \item $(q_3,q_3,q_3,q_3,q_3,q_3-2,2)$, $\Delta_3\in \left\{\frac{r}{2}, \frac{5r}{6}-12\right\}$
    \item $(q_3,q_3,q_3,q_3,q_3,q_3-2,1,1)$, $\Delta_3\in \left\{\frac{r}{2}-4,\frac{r}{2}-2,\frac{5r}{6}-10,\frac{5r}{6}-8\right\}$
\end{enumerate}

We next look at the following partitions.
\begin{enumerate}[label=(\alph*)]
    \item $(q_3,q_3,q_3,q_3,q_3-1,q_3-2,3)$, $\Delta_3\in \left\{\frac{5r}{6}-10,\frac{5r}{6}-8,\frac{7r}{6}-20,\frac{7r}{6}-18\right\}$
    \item $(q_3,q_3,q_3,q_3,q_3,q_3-3,3)$, $\Delta_3\in \left\{\frac{5r}{6}-12, \frac{7r}{6}-24\right\}$
    \item $(q_3,q_3,q_3,q_3,q_3,q_3-3,2,1)$, $\Delta_3\in \left\{\frac{5r}{6}-10,\frac{5r}{6}-8,\frac{7r}{6}-20,\frac{7r}{6}-18\right\}$
    \item $(q_3,q_3,q_3,q_3,q_3,q_3-4,4)$, $\Delta_3\in \left\{\frac{7r}{6}-24, \frac{3r}{2}-40\right\}$
\end{enumerate}
Any other partition is less than (a) $(q_3,q_3,q_3,q_3,q_3-1,q_3-2,3)$, (c) $(q_3,q_3,q_3,q_3,q_3,q_3-3,2,1)$, or (d) $(q_3,q_3,q_3,q_3,q_3,q_3-4,4)$ and hence satisfies $\Delta_3\geq \frac{5r}{6}-10$ once $r\geq 45$. Unless we use 
$$\Delta_3\in \left\{\frac{r}{6},\frac{-r}{6}, \frac{r}{2}-4,\frac{r}{2}-2, \frac{r}{2},\frac{5r}{6}-12\right\},$$
$$\Delta\geq \frac{-r}{2}+0+\left(\frac{5r}{6}-10\right)=\frac{r}{3}-10.$$
We can now assume $\Delta_3$ is one of the values above. 

We have reduced the possible column dimensions to
$$\Delta_1=\frac{-r}{2}, \Delta_2\in \left\{0,\frac{2r}{3}-4,\frac{2r}{3}-2,\frac{2r}{3}\right\}, \Delta_3\in \left\{\frac{-r}{6},\frac{r}{6}, \frac{r}{2}-4,\frac{r}{2}-2,\frac{r}{2},\frac{5r}{6}-12\right\}.$$
If $\Delta_3=\frac{-r}{6}$, then
$$\Delta\leq \frac{-r}{2}+\left(\frac{2r}{3}\right)+\frac{-r}{6}=0.$$
If $\Delta_3\neq \frac{-r}{6}$ and $\Delta_2\neq 0$, then
$$\Delta\geq \frac{-r}{2}+\left(\frac{2r}{3}-4\right)+\left(\frac{r}{6}\right)= \frac{r}{3}-4.$$
Hence we can assume $\Delta_3\neq \frac{-r}{6}$ and $\Delta_2=0$. Unless, we take $\Delta_3=\frac{5r}{6}-12$,
$$\Delta\leq \frac{-r}{2}+0+\frac{r}{2}=-2.$$
Thus we must take $\Delta_1=\frac{-r}{2}, \Delta_2=0, \Delta_3=\frac{5r}{6}-12$ with $\Delta=\frac{r}{3}-12$. We saw that this occurs when the partition for the third column is either (v) $(q_3,q_3,q_3,q_3,q_3,q_3-2,2)$ or (b) $(q_3,q_3,q_3,q_3,q_3,q_3-3,3)$. 

Now that we have run through all the cases, we have found that all configurations satisfy $\Delta\geq \frac{r}{3}-12$ with equality only possible for the examples given in Example \ref{ex_mincases}. This proves Theorem \ref{thm_quadthm_config}.

\footnotesize{\bibliography{proj.bib}}
\bibliographystyle{alpha}
\end{document}